\newtheorem{theorem}{Theorem}[section]
\newtheorem{proposition}[theorem]{Proposition}
\newtheorem{lemma}[theorem]{Lemma}
\newtheorem{corollary}[theorem]{Corollary}
\newtheorem{observation}[theorem]{Observation}
\newtheorem*{lemmaagain}{Lemma~\ref{lem-sp3t} (repeated)}
\theoremstyle{definition}    
\newtheorem{definition}[theorem]{Definition}
\newcommand{\Sp}{\text{ST}}
\newcommand{\Tr}{\text{Tr}}
\newcommand{\B}{{\cal{B}}}
\newcommand{\Em}{{\cal{E}}}
\newcommand{\cO}{{\cal O}}
\newcommand{\wB}{{\widehat B}}
\newcommand{\wf}{{\widehat \Phi}}
\newcommand{\uhel}[2]{{\widehat{r_#1r_#2}}}
\begin{document}

\title{The Planar Slope Number of Planar Partial 3-Trees of Bounded Degree} 

\author{V\'{\i}t Jel\'{\i}nek$^{1}$, 
        Eva Jel\'{\i}nkov\'a$^{1}$,  
        Jan Kratochv\'{\i}l$^{1,\,2}$,\\ 
        Bernard Lidick\'{y}$^{1}$, 
        Marek Tesa\v{r}$^{1}$,
        Tom\'{a}\v{s} Vysko\v{c}il$^{1,\,2}$\\[10pt]
\texttt{\char123
jelinek,eva,honza,bernard,tesulo,whisky\char125@kam.mff.cuni.cz}\\
\\
$^1$Department of Applied Mathematics\thanks{Supported by project
1M0021620838 of the Czech Ministry of Education and by SVV-2010-261313.}{}\ \ and\\
$^2$Institute for Theoretical Computer Science\thanks{Supported by grant
1M0545 of the Czech Ministry of Education.}\\ 
Charles University, Malostransk\'e n\'am\v est\'\i\ 25\\ Prague, Czech Republic
} 

\date{\today}

\maketitle


\begin{abstract}
It is known that every planar graph has a planar embedding where edges are 
represented by non-crossing straight-line segments. We study the planar slope number, i.e., 
the minimum number of distinct edge-slopes in such a drawing of a planar graph 
with maximum degree~$\Delta$. 
We show that the planar slope 
number of every planar partial 3-tree and also every plane partial 3-tree is 
at most $\cO(\Delta^5)$. In particular, we answer the question of Dujmovi\' c 
et~al. [Computational Geometry 38 (3), pp. 194--212 (2007)]
whether there is a function $f$ such that plane maximal outerplanar 
graphs can be drawn using at most $f(\Delta)$ slopes.
\\
{\bf Keywords:} graph drawing; planar graphs; slopes; planar slope number
\end{abstract}

%
%
%
%

\section{Introduction}

The \emph{slope number} of a graph $G$ was introduced by Wade and Chu~\cite{wc1994}.
It is defined as the minimum number of distinct edge-slopes in a straight-line drawing of $G$.
Clearly, the slope number of $G$ is at most the number of edges of $G$, and it is at least
half of the maximum degree $\Delta$ of $G$.

Dujmovi\' c et~al.~\cite{dsw2004} asked whether there was a function $f$ such that
each graph with maximum degree $\Delta$ could be drawn using at most $f(\Delta)$ slopes.
In general, the answer is \emph{no} due to a result of Bar\' at et al.~\cite{bmw2006}.
Later, Pach and P\' alv\" olgyi~\cite{pp2006} and Dujmovi\' c et al.~\cite{dsw2007} proved that
for every $\Delta \geq 5$, there are graphs of maximum degree $\Delta$ that need an arbitrarily large number of slopes.

On the other hand, Keszegh et al.~\cite{kppt2007} proved that every graph of 
maximum degree three can be drawn using at most five slopes, and if we 
additionally assume that the graph is connected and has at least one vertex 
of degree less than three then four slopes suffice. Mukkamala and 
Szegedy~\cite{ms2009} have shown that four slopes also suffice for every 
connected cubic graph. Dujmovi\' c et al.~\cite{dsw2007} give a number of 
bounds in terms of the maximum degree: for interval graphs, cocomparability 
graphs, or AT-free graphs. All the results mentioned so far are related to 
straight-line drawings which are not necessarily non-crossing.

It is known that every planar graph $G$ can be drawn so that edges of $G$ are 
represented by non-crossing segments~\cite{f1948}. We call such a planar 
drawing a \emph{straight-line embedding} of~$G$. In this paper, we 
examine the minimum number of slopes in a straight-line embedding of a planar graph. 

In this paper, we make the (standard) distinction between \emph{planar 
graphs}, which are graphs that admit a plane embedding, and \emph{plane 
graphs}, which are graphs accompanied with a fixed prescribed combinatorial 
embedding, i.e., a prescribed face structure, including a prescribed outer 
face. Accordingly, we distinguish between the \emph{planar slope number} of a 
planar graph $G$, which is the smallest number of slopes needed to construct 
any straight-line embedding of $G$, as opposed to the \emph{plane slope 
number} of a plane graph~$G$, which is the smallest number of slopes needed 
to realize the prescribed combinatorial embedding of $G$ as a straight-line 
embedding.

The research of slope parameters related to plane embedding was initiated by 
Dujmovi\' c et~al.~\cite{dsw2004}. In~\cite{dsw-seg2007}, there are numerous 
results for the plane slope number of various classes of graphs. For instance, 
it is proved that every plane $3$-tree can be drawn using at most $2n$ slopes, 
where $n$ is its number of vertices. It is also shown that every $3$-connected 
plane cubic graph can be drawn using three slopes, except for the three edges 
on the outer face. 

Recently, Keszegh, Pach and  P\' alv\" olgyi~\cite{kpp2010}
have shown that any planar graph of maximum degree $\Delta$ can be drawn with
$2^{\cO(\Delta)}$ slopes. Their argument is based on a representation of planar
graph by touching disks.

In this paper, we study both the plane slope number and the planar slope 
number. The lower bounds of~\cite{bmw2006,dsw2007,pp2006} for bounded-degree 
graphs do not apply to our case, because the constructed graphs with large 
slope numbers are not planar. Moreover, the upper bounds 
of~\cite{kppt2007,ms2009} give drawings that contain crossings even for planar 
graphs.

For a fixed $k\in\mathbb N$, a \emph{$k$-tree} is defined recursively as follows.
A complete graph on $k$ vertices is a $k$-tree.
If $G$ is a $k$-tree and $K$ is a $k$-clique of $G$, then the graph formed by adding
a new vertex to $G$ and making it adjacent to all vertices of $K$ is also a $k$-tree.
A subgraph of a $k$-tree is called a \emph{partial $k$-tree}. 

We present several upper bounds on the plane and planar slope number in terms 
of the maximum degree $\Delta$. The most general result of this paper is the 
following theorem, which deals with plane partial 3-trees. 

\begin{theorem}\label{thm:tree}
The plane slope number of any plane partial 3-tree with maximum degree $\Delta$ 
is at most $\cO(\Delta^5)$.
\end{theorem}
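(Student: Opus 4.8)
The plan is to proceed by a structural induction that follows the recursive construction of 3-trees, realizing each vertex insertion as a geometric operation that consumes only $O(\Delta^5)$ slopes in total. First I would recall that a partial 3-tree has treewidth at most $3$ and hence admits a tree-decomposition into bags of size at most $4$; equivalently, a plane partial 3-tree can be augmented to a plane 3-tree by adding edges, so it suffices to prove the bound for plane 3-trees and then delete the added edges (which only decreases the number of slopes used). The key structural fact I would exploit is that every plane 3-tree has an inductive decomposition: it is built by starting from a triangle and repeatedly inserting a new vertex into some triangular face, joining it to all three vertices of that face. This gives a rooted ternary "decomposition tree" whose nodes correspond to the triangular faces, and the central idea is to draw each such face as a geometric triangle and to place the newly inserted vertex, together with its entire recursively-drawn subtree, inside that triangle.

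The main technical engine I would set up is a notion of a \emph{slope set} $S$ of size $O(\Delta^5)$ that is fixed in advance and used uniformly throughout the drawing. Because every vertex has degree at most $\Delta$, the edges incident to any single vertex need at most $\Delta$ distinct slopes locally, but the difficulty is that the \emph{same} global slope set must serve at every face at every level of the recursion, regardless of the shape of the containing triangle. To achieve scale- and shape-independence, I would prove a geometric lemma stating that any triangle whose three edges use slopes from $S$ can have a new apex vertex placed inside it so that (i) the three new edges to the apex again use slopes from $S$, and (ii) the three resulting sub-triangles are themselves "admissible," i.e., each has its three edges on slopes from $S$ and is large enough to accommodate the worst-case continuation of the recursion. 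Guaranteeing property (ii) uniformly is what forces the slope set to be polynomially large rather than constant: one needs enough slopes so that, no matter which of the $O(\Delta)$ possible local edge-slopes the three triangle sides take, there is always a valid apex placement whose sub-triangles are again drawable, and iterating the degree constraint across the three sub-faces multiplies these requirements, which is where the $\Delta^5$ (rather than, say, $\Delta^2$ or $\Delta^3$) is likely to come from.

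Concretely, the induction hypothesis I would carry is: for every triangle $T$ whose sides lie on slopes in $S$ and whose "fatness" (ratio of inradius to diameter, say) exceeds a fixed threshold, the plane sub-3-tree rooted at $T$ can be drawn inside $T$ using only slopes from $S$, with the incident edges at each of $T$'s three corners reserving enough angular room for those corners' remaining neighbors elsewhere in the graph. The inductive step places the apex $v$ of the current face, routes its at-most-$\Delta$ edges on slopes from $S$ (partitioning the available angular sector at $v$ so each of the three sub-triangles gets a fair share), checks that the three sub-triangles remain fat enough and slope-admissible, and then applies the hypothesis to each. The hard part will be the geometric feasibility of the apex placement: I must show that the finite slope set $S$ simultaneously satisfies a slope constraint (each of the many possible triangle shapes admits a compatible interior apex) and a non-degeneracy/fatness constraint (the three children stay well-shaped so the recursion does not collapse), while also respecting the degree bound at every vertex so that no vertex is ever asked to use more than its $\Delta$ reserved slopes. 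Bounding the worst case over all triangle shapes arising in the recursion, and certifying that $|S| = O(\Delta^5)$ suffices for every one of them, is the crux; everything else reduces to routine trigonometric estimates and the deletion argument that passes from plane 3-trees back to plane partial 3-trees.
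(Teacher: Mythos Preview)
Your reduction step has a genuine gap that the paper explicitly flags. You write that it suffices to augment the plane partial 3-tree to a plane 3-tree, prove the bound there, and then delete the added edges. The problem is that the augmenting plane 3-tree need not have bounded degree: even if the original graph has maximum degree $\Delta$, the plane 3-tree containing it may require arbitrarily large vertex degrees. So you cannot apply a $\Delta$-dependent bound to the augmented graph and then restrict. The paper's workaround is nontrivial: it introduces \emph{semi-partial} 3-trees (plane 3-trees with some tilted edges deleted) and proves that every plane partial 3-tree of degree $\Delta$ embeds as a spanning subgraph of a semi-partial 3-tree of degree $O(\Delta)$. This ``correct covering'' argument (Lemma~\ref{lem-cover}) is a real piece of work that your outline skips entirely.

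Even granting bounded degree, your inductive scheme based on vertex-by-vertex insertion with a fatness invariant is not the right mechanism. With a \emph{fixed} finite slope set you cannot keep subdividing triangles and stay fat: after enough levels the child triangles are forced to become arbitrarily thin. What the paper actually controls is not fatness but the number of \emph{homothety classes} of triangles that arise. This is achieved not by inserting one vertex at a time but by grouping all vertices of a given level into a single \emph{tripod} and inserting the whole tripod at once into the surrounding (balanced) triangle. The crucial observation is that a tripod can be arbitrarily large, yet only $O(\Delta)$ of its vertices (the ``relevant'' ones) are joined by tilted edges to the surrounding triangle; the Tripod Drawing Lemma embeds any tripod so that its balanced edges use $O(\Delta^3)$ slopes, its relevant vertices land on $O(\Delta^2)$ fixed points, and its internal faces realize $O(\Delta^3)$ triangle shapes up to homothety. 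The product $O(\Delta^3)\cdot O(\Delta^2)$ then bounds the tilted-edge slopes, giving $O(\Delta^5)$. Your sketch does not isolate the relevant/irrelevant distinction or the level-by-level tripod structure, and without that you have no handle on why the recursion closes with polynomially many slopes.
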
                                                                                             

Note that the above theorem implies that the planar slope number of any planar partial 
3-tree is also at most $\cO(\Delta^5)$. 

Since every outerplanar graph is also a partial 3-tree, the result above 
answers a question of Dujmovi\' c et~al.~\cite{dsw-seg2007}, who asked whether 
a plane maximal outerplanar graph can be drawn using at most $f(\Delta)$ 
slopes. 

Unlike the results of Keszegh, Pach and P\'alv\"olgyi~\cite{kpp2010}, our
arguments are only applicable to a restricted class of planar graphs. On the
other hand, our bound is polynomial in $\Delta$ rather than exponential, and
moreover, our proof is constructive.

In the special case of series-parallel graphs of maximum degree at most 3, we 
are able to prove a better (in fact optimal) upper bound.

\begin{theorem}\label{thm:spmax3}
Any series-parallel graph with maximum degree at most 3 has planar slope number at most 3.
\end{theorem}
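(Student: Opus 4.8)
The plan is to produce the drawing recursively, following the series-parallel (equivalently, SPQR) decomposition of the graph, using the three slopes of a triangular grid, say the directions at angles $0^\circ$, $60^\circ$ and $120^\circ$. These three slopes provide six distinct rays emanating from every point, which is more than enough to accommodate a vertex of degree at most $3$; the entire difficulty will be geometric, namely routing the edges without crossings while keeping the slope set fixed. First I would reduce to the $2$-connected case: using the block-cut tree, it suffices to draw each block (a $2$-connected series-parallel graph or a bridge) and then merge the blocks at the cut vertices. Since every cut vertex has degree at most $3$, the blocks meeting at it can be placed in pairwise disjoint angular sectors around it, so the merge introduces no crossings and uses no new slopes.

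For a $2$-connected series-parallel graph I would root the SPQR decomposition and build, for each $2$-terminal subgraph with poles $s$ and $t$, a straight-line embedding satisfying a fixed invariant: the poles $s$ and $t$ are placed on a common horizontal line as the leftmost and rightmost points of the drawing; the whole drawing is confined to a bounded, $x$-monotone lens-shaped region spanned by $s$ and $t$; and whenever a pole still has degree less than $3$ in the current subgraph, one grid direction pointing outside the lens is reserved at that pole for a future incident edge. The base case is a single edge, drawn horizontally along the spine. I would then realize the two composition operations so as to preserve this invariant.

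In a \emph{series} composition the two sub-drawings are placed consecutively along the spine (after an affine rescaling that matches the reserved directions), identifying $t_1$ with $s_2$ at the junction vertex $m$. Because $m$ has degree at most $3$ and contributes at most one edge from each side using the reserved slopes, a compatible choice of the reserved directions lets the two incident slopes at $m$ coexist among the three available slopes without crossings. In a \emph{parallel} composition the key structural fact is that, at a P-node of a $2$-connected series-parallel graph, every branch attaches to each pole through exactly one edge; hence the degree bound $\Delta\le 3$ forces at most three parallel branches, each contributing exactly one edge at $s$ and one at $t$. I would then route these branches in pairwise disjoint regions -- one along the spine and the others bulging above and below it -- assigning to the edges at $s$ (and symmetrically at $t$) the distinct grid directions reserved for them. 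Disjointness of the regions guarantees planarity, and three slopes suffice precisely because at most three branches meet at a pole.

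The step I expect to be the main obstacle is the geometric bookkeeping in the parallel composition: one must guarantee that the lens-shaped regions of the nested branches remain pairwise disjoint under repeated composition, that each sub-drawing can be affinely placed into its allotted region while keeping all edges on the three fixed slopes, and that the reserved outside-pointing directions at the poles are consistent with the slopes actually used by the incident edges. I would control this by insisting that every sub-drawing be monotone with respect to the spine and contained in a thin region around it, so that scaling and vertical stacking of branches never creates crossings; the degree-$3$ hypothesis is what makes the required number of disjoint directions at each pole available throughout.
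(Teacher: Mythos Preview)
Your overall strategy---recurse along the SP decomposition with a fixed three-slope set and maintain a geometric invariant at the two poles---is the same as the paper's. But two points keep your sketch from closing.

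First, a concrete error: you repeatedly invoke ``affine rescaling'' and ``affinely placed'' to fit sub-drawings into their slots, and you speak of making each sub-drawing ``thin'' so that branches can be stacked. General affine maps (in particular, independent horizontal/vertical scaling) do \emph{not} preserve a fixed slope set; only similarities (uniform scaling, translation, and the few rotations/reflections compatible with your three directions) do. So you cannot squash a drawing to make it thin; every sub-drawing keeps its aspect ratio, and your fitting argument must respect that.

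Second, and relatedly, your invariant (``$x$-monotone, lens-shaped, with one outward direction reserved at each low-degree pole'') is too loose to push the induction through once only similarities are allowed. The paper makes exactly this work by replacing your lens with rigid shapes keyed to the pole degrees: a graph whose poles both have degree $\le 2$ is embedded in an isosceles right \emph{up-triangle} with the poles at the hypotenuse endpoints; a graph with one pole of degree $3$ goes into a specific convex pentagon (an \emph{up-spade}). Because these containers have fixed angles, a uniformly scaled copy of one nests exactly into the slot created by the next series or parallel step, with no ad hoc thinning and no collisions. The accompanying case split on pole degrees---$(1,1)$, $(2,2)$, $(3,2)$, and finally $(3,3)$ at the top---is what tells you which container to use and where the reserved directions must point. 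Your proposal recognises that each parallel branch is a $(1,1)$-graph (that is the right structural fact), but without the fixed-shape containers you cannot certify that the stacked branches stay disjoint. Tightening your invariant to a specific triangle/pentagon pair, and tracking the pole degrees explicitly, would turn your outline into the paper's proof.
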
                                                 

Extended abstract of this paper was presented at Graph Drawing 2009~\cite{jjktv2009}.
Note that in that version Theorem~\ref{thm:tree} was stated with
bound $\cO(2^{\cO(\Delta)})$.

%
%
%
%

\section{Preliminaries}

Let us introduce some basic terminology and notation that will be used 
throughout this paper.

Let $s$ be a segment in the plane. The smallest angle $\alpha \in [0,\pi)$ such
that $s$ can be made horizontal by a clockwise rotation by $\alpha$, is called
the \emph{slope of $s$}. The \emph{directed slope} of a directed segment is an
angle $\alpha' \in [0,2\pi)$ defined analogously.

A plane graph is called a \emph{near triangulation} if all its faces, except 
possibly the outer face, are triangles.

%
%
%
%
\section{Plane partial 3-trees}

In this section we present the proof of Theorem~\ref{thm:tree}. We start with
some observations about the structure of plane 3-trees.
Throughout this section, we assume that $\Delta$ is a fixed integer.

It is known that any plane 3-tree can be generated from a triangle by a 
sequence of vertex-insertions into inner faces. Here, a 
\emph{vertex-insertion} is an operation that consists of creating a new vertex 
in the interior of a face, and then connecting the new vertex to all the three 
vertices of the surrounding face, thus subdividing the face into three new 
faces.

For a plane partial 3-tree $G$ we define the \emph{level} of a vertex $v$ as 
the smallest integer $k$ such there is a set $V_0$ of $k$ vertices of $G$ with 
the property that $v$ is on the outer face of the plane graph $G-V_0$. 
Let $G$ be a plane partial 3-tree. An edge of $G$ is called \emph{balanced} 
if it connects two vertices of the same level of~$G$. An edge that is not 
balanced is called \emph{tilted}. Similarly, a face of $G$ whose all vertices 
belong to the same level is called balanced, and any other face is called tilted.
In a plane 3-tree, the level of a vertex $v$ can also be equivalently defined as the 
length of the shortest path from $v$ to a vertex on the outer face. However, 
this definition cannot be used for plane partial 3-trees.

Note that whenever we insert a new vertex $v$ into an inner face of a 3-tree, 
the level of $v$ is one higher than the minimum level of its three neighbors; 
note also that the level of all the remaining vertices of the 3-tree is not 
affected by the insertion of a new vertex.

Let $u,v$ be a pair of vertices forming an edge. A \emph{bubble} over $uv$ is 
an outerplanar plane near triangulation that contains the edge $uv$ on the 
boundary of the outer face. The edge $uv$ is called the \emph{root} of the 
bubble. A \emph{trivial bubble} is a bubble that has no other edge apart from 
the root edge. A \emph{double bubble} over $uv$ is a union of two bubbles over 
$uv$ which have only $u$ and $v$ in common and are attached to $uv$ from its 
opposite sides. A \emph{leg} is a graph $L$ created from a path $P$ by adding a 
double bubble over every edge of $P$. The path $P$ is called the \emph{spine 
of $L$} and the endpoints of $P$ are also referred to as the endpoints of the 
leg. Note that a single vertex is also considered to form a leg. 

A \emph{tripod} is a union of three legs which share a common endpoint. A 
\emph{spine} of a tripod is the union of the spines of its legs. Observe that 
a tripod is an outerplanar graph. The vertex that is shared by 
all the three legs of a tripod is called \emph{the central vertex}.

Let $G$ be a near triangulation, let $\Phi$ be an inner face of $G$. Let $T$ 
be a tripod with three legs $X,Y,Z$ and a central vertex $c$. An 
\emph{insertion of tripod $T$ into the face $\Phi$} is the operation performed 
as follows. First, insert the central vertex $c$ into the interior of $\Phi$ 
an connect it by edges to the three vertices of~$\Phi$. This subdivides $\Phi$ 
into three subfaces. Extend $c$ into an embedding of the whole tripod $T$, by 
embedding a single leg of the tripod into the interior of each of the three 
subfaces. Next, connect every non-central vertex of the spine of the tripod to 
the two vertices of $\Phi$ that share a face with the corresponding leg. 
Finally, connect each non-spine vertex $v$ of the tripod to the single vertex 
of $\Phi$ that shares a face with~$v$. See Figure~\ref{fig:tripod}. Observe 
that the graph obtained by a tripod insertion into $\Phi$ is again a near 
triangulation.

\begin{figure}
\begin{centering}
 \includegraphics{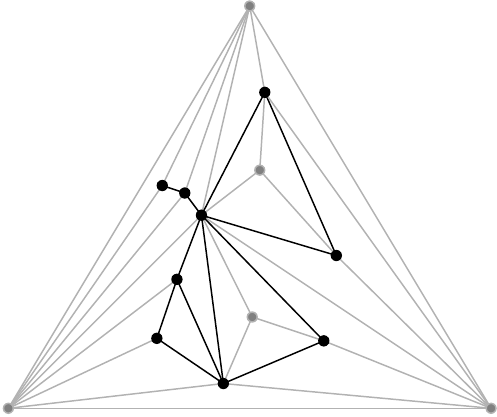}
 \caption{An example of a tripod consisting of vertices of level 1 in a plane 
3-tree.\label{fig:tripod}}
\end{centering}
\end{figure}

\begin{lemma}\label{lem:tripodOK}
Let $G$ be a graph. The following statements are equivalent:
\begin{enumerate}
  \item 
$G$ is a plane 3-tree, i.e., $G$ can be created from a triangle by a sequence 
of vertex insertions into inner faces.
  \item 
$G$ can be created from a triangle by a sequence of tripod insertions into inner faces.
  \item 
$G$ can be created from a triangle by a sequence of tripod insertions into 
balanced inner faces.
\end{enumerate}
\end{lemma}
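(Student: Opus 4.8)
The plan is to prove the cycle of implications $(3)\Rightarrow(2)\Rightarrow(1)\Rightarrow(3)$. The implication $(3)\Rightarrow(2)$ is immediate, since a balanced inner face is in particular an inner face, so any construction witnessing $(3)$ already witnesses $(2)$. For $(2)\Rightarrow(1)$ I would show that a single tripod insertion is nothing but a sequence of vertex insertions. Inserting the central vertex $c$ into a triangular face $\Phi$ and joining it to the three vertices of $\Phi$ is by definition a vertex insertion; afterwards one builds each of the three spines as a path, adding each spine vertex into the triangular face spanned by the previous spine vertex and the two vertices of $\Phi$ sharing its leg; finally each bubble, being an outerplanar near triangulation rooted at a spine edge, is filled in vertex by vertex, every new vertex landing in a triangular face and being joined to its three boundary vertices. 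Since at every step we add a vertex into an inner triangular face, concatenating these sequences over all tripod insertions realizes $G$ as a plane 3-tree.

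The substance of the lemma lies in $(1)\Rightarrow(3)$, which I would prove by induction on the levels of $G$, peeling off one layer at a time. The key structural claim is: if $\Phi=xyz$ is a balanced inner face all of whose vertices have level $k$, then the level-$(k+1)$ vertices lying inside $\Phi$ form a tripod inserted into $\Phi$, and every vertex of level at least $k+2$ inside $\Phi$ lies inside one of the resulting balanced faces of level $k+1$. Granting this, the base case is the outer triangle (a single balanced inner face at level $0$), and the induction recurses into each balanced level-$(k+1)$ face produced by the insertion; since every higher-level vertex is confined to such a face, the recursion accounts for all of $G$.

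To prove the structural claim I would classify the level-$(k+1)$ vertices inside $\Phi$ by how many of $x,y,z$ they are adjacent to. Each such vertex must be adjacent to at least one of $x,y,z$, because these are the only level-$k$ vertices on the boundary of the region while all interior vertices have level at least $k+1$. Exactly one vertex $c$ is adjacent to all three (the first vertex stacked into $\Phi$; once $c$ is present, $x,y,z$ are mutually separated and no later vertex can see all three), and it is the central vertex. The vertices adjacent to exactly two, say $x$ and $y$, are precisely those stacked over the edge $xy$: each triangle $vxy$ arises by inserting $v$ into the unique inner face incident to $xy$, so these vertices form a path $c=w_0,w_1,\dots,w_m$ with consecutive vertices adjacent, namely a spine. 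The vertices adjacent to exactly one, say $x$, and lying over a fixed spine edge $w_iw_{i+1}$ are all joined to the common vertex $x$; hence the graph they induce together with $w_iw_{i+1}$ becomes outerplanar after deleting $x$ (a planar graph all of whose vertices are adjacent to a single vertex has an outerplanar remainder) and is a near triangulation by the 3-tree property, so it is a bubble. Assembling these pieces gives exactly a tripod, and one checks that the inner faces of the bubbles are precisely the balanced level-$(k+1)$ faces while every face meeting $\{x,y,z\}$ is tilted and contains no further level-$(k+1)$ vertex.

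The main obstacle I anticipate is this structural claim, and within it the verification that the three families of level-$(k+1)$ vertices glue into a genuine tripod rather than some more general outerplanar configuration: one must argue, using planarity and the edge-stacking structure of 3-trees, both that the two-neighbor vertices form three paths sharing only $c$ and that the one-neighbor vertices distribute into double bubbles over the spine edges. Once the observation that a cone over a single vertex forces an outerplanar base, together with the edge-stacking description of the spines, is in hand, the remaining bookkeeping (the levels of the created faces, and the fact that no level-$(k+1)$ vertex escapes the tripod) is routine.
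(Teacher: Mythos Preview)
Your proposal is correct and follows essentially the same route as the paper's proof: the same cycle of implications, the same reduction of tripod insertion to vertex insertions for $(2)\Rightarrow(1)$, and the same level-by-level induction for $(1)\Rightarrow(3)$. The only cosmetic difference is that the paper analyzes the level-$(k+1)$ vertices inside $\Phi$ by tracking their insertion order (distinguishing whether each is inserted into a face meeting two vertices of $\Phi$ or only one), whereas you classify them statically by how many of $x,y,z$ they are adjacent to; these two viewpoints are equivalent, since a vertex inserted into a face containing exactly $j$ vertices of $\Phi$ ends up adjacent to exactly those $j$ vertices.
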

\begin{proof}
Clearly, (3) implies (2).

To observe that (2) implies (1), it suffices to notice that a tripod insertion 
into a face $\Phi$ can be simulated by a sequence of vertex insertions: first 
insert the central vertex of a tripod into $\Phi$, then insert the vertices of 
the spine into the resulting subfaces, and then create each bubble by 
inserting vertices into the face that contains the root of the bubble and its 
subsequent subfaces.

To show that (1) implies (3), proceed by induction on the number of levels 
in~$G$. If $G$ only has vertices of level $0$, then it consists of a single 
triangle and there is nothing to prove. Assume now that the $G$ is a graph 
that contains vertices of $k>0$ distinct levels, and assume that any 3-tree 
with fewer levels can be generated by a sequence of balanced tripod insertions 
by induction. 

We will show that the vertices of level $k$ induce in $G$ a subgraph 
whose every connected component is a tripod, and that each of these tripod is 
inserted inside a triangle whose vertices have level $k-1$. 

Let $C$ be a connected component of the subgraph induced in $G$ by the 
vertices of level $k$. Let $v_1,v_2,\dotsc,v_m$ be the vertices of $C$, in the 
order in which they were inserted when $G$ was created by a sequence of vertex 
insertions. Let $\Phi$ be the triangle into which the vertex $v_1$ was 
inserted, and let $x,y$ and $z$ be the vertices of $\Phi$. Necessarily, all 
three of these vertices have level $k-1$. Each of the vertices 
$v_2,\dotsc,v_m$ must have been inserted into the interior of $\Phi$, and each 
of them must have been inserted into a face that contained at least one of the 
three vertices of $\Phi$. 

Note that at each point after the insertion of $v_1$, there are exactly three 
faces inside $\Phi$ that contain a pair of vertices of $\Phi$; each of these 
three faces is incident to an edge of $\Phi$. Whenever a vertex $v_i$ is 
inserted into such a face, the subgraph induced by vertices of level $k$ grows 
by a single edge. These edges form a union of three paths that share the vertex 
$v_1$ as their common endpoint. 

On the other hand, when a vertex $v_i$ is inserted into a face formed by a single 
vertex of $\Phi$ and a pair of previously inserted vertices $v_j$, $v_\ell$, 
then the graph induced by vertices of level $k$ grows by two new edges $v_iv_j$ and $v_iv_\ell$, as well as a new triangular face with vertices $v_i,v_j,v_\ell$. 

With these observations, it is easily checked (e.g., by induction on $i$) that 
for every $i\ge1$, the subgraph of $G$ induced by the vertices 
$v_1,\dotsc,v_i$ is a tripod inserted into $\Phi$. From this fact, it follows 
that the whole graph $G$ may have been created by a sequence of tripod 
insertions into balanced faces.
\end{proof}                    

Note that when we insert a tripod into a balanced face, all the vertices of 
the tripod will have the same level (which will be one higher than the level 
of the face into which we insert the tripod). In particular, each balanced 
face we create by this insertion is an inner face of the inserted tripod.

Recall that a plane partial 3-tree is a plane graph that is a subgraph of a 
3-tree. 
Kratochv\'\i l and Vaner~\cite{kv2010} 
have shown that every plane 
partial 3-tree $G$ is in fact a subgraph of a plane 3-tree. Furthermore, if a 
plane partial 3-tree $G$ has at least three vertices, it is in fact a spanning 
subgraph of a plane 3-tree, i.e., it can be extended into a plane 3-tree by 
only adding edges. 

Unfortunately, the plane 3-tree that contains a plane partial 3-tree $G$ may 
in general require arbitrarily large vertex-degrees, even if the maximum 
degree of $G$ is bounded. Thus, the result of Kratochv\'\i l and Vaner does 
not allow us to directly simplify the problem to plane 3-trees drawing.

To overcome this difficulty, we introduce the notion of `plane semi-partial 
3-tree', which can be seen as an intermediate concept between plane 3-trees 
and plane partial 3-trees.

\begin{definition}
A graph $G$ is called \emph{a plane semi-partial 3-tree} if 
$G$ is obtained from a plane 3-tree $H$ by erasing some of the tilted edges of~$H$.
\end{definition}

Our goal is to prove that every plane partial 3-tree of maximum degree 
$\Delta$ can be drawn with at most $\cO(\Delta^5)$ slopes. We obtain this 
result as a direct consequence of two main propositions, stated below.

\begin{proposition}\label{pro-semi}
Any connected plane partial 3-tree of maximum degree $\Delta$ is a subgraph of a 
plane semi-partial 3-tree of maximum degree at most $37\Delta$.
\end{proposition}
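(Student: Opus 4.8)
The plan is to take a connected plane partial 3-tree $G$ of maximum degree $\Delta$ and, using the result of Kratochv\'\i l and Vaner, first embed it (or a supergraph of it) into some plane 3-tree $H$. The trouble, as noted in the excerpt, is that $H$ may have vertices of unboundedly large degree, so $H$ itself is useless as a semi-partial 3-tree of bounded degree. The key idea is therefore \emph{not} to keep all of $H$, but only the \emph{balanced} edges of $H$ (together with the edges of $G$), and to argue that the balanced edges of $H$, restricted to the neighborhood of any vertex, cannot be too numerous relative to the original degree of that vertex in $G$. So the first step is to set up the embedding $G\subseteq H$ carefully, and then to define the target semi-partial 3-tree $G'$ as the union of $G$ with a suitable set of balanced edges of $H$, arranged so that $G'$ is itself obtained from a plane 3-tree by deleting tilted edges.

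Second, I would exploit the tripod structure from Lemma~\ref{lem:tripodOK}. Since $H$ is built by inserting tripods into balanced faces, and every tripod inserted into a balanced face has all of its own vertices at a single (new) level, the balanced edges of $H$ are exactly the internal spine and bubble edges sitting \emph{within} individual tripods; the edges joining a tripod to the triangle it was inserted into are tilted. This means the balanced-edge subgraph decomposes cleanly into the edge sets of the inserted tripods. The degree a vertex $v$ of $G$ can accrue from balanced edges is then controlled by how many tripod-internal edges meet $v$, and a vertex lies internally in at most one tripod (the one whose insertion created it) while possibly serving as a triangle-corner for several child tripods inserted into faces incident to $v$. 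The counting must bound the number of such incident child tripods, and the number of balanced edges each contributes at $v$, purely in terms of $\deg_G(v)\le\Delta$.

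Third is the heart of the argument, which I also expect to be the main obstacle: bounding the blow-up to the explicit constant $37\Delta$. The danger is that a low-degree vertex of $G$ could be a corner of many faces into which large tripods were inserted in $H$, so that $H$ piles up arbitrarily many balanced edges at $v$ even though $\deg_G(v)$ is small. To defeat this I would \emph{redo} the insertion order / tripod decomposition so that it respects $G$: when extending $G$ to a plane 3-tree, only insert tripods where $G$ forces structure, and delete from $H$ any balanced edge not needed to keep $G'$ a semi-partial 3-tree. Concretely, I would show that every balanced edge we are forced to retain at $v$ can be charged to an edge of $G$ incident to $v$ or to a nearby face of $G$, with each edge of $G$ charged only a bounded number of times; summing these bounded charges over the at most $\Delta$ edges at $v$, and tracking the constant contributed by spine edges, bubble-root edges, and the two triangle-corner incidences per tripod, yields the factor $37$. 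The arithmetic of this charging scheme — making sure every retained balanced edge is accounted for exactly once and that no charge exceeds its allotted constant — is the delicate part; the topology (that $G'$ is genuinely a semi-partial 3-tree, obtained from a plane 3-tree by erasing only tilted edges) should follow routinely once the set of retained balanced edges is specified, since by construction we never delete a balanced edge of the ambient 3-tree.
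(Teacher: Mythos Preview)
Your high-level strategy matches the paper's: start from a plane 3-tree $H\supseteq G$ via Kratochv\'\i l--Vaner, rebuild $H$ level by level so that each inserted tripod has degree $O(\Delta)$, and then take the semi-partial 3-tree obtained by deleting from the rebuilt $H$ all tilted edges not in $G$. Your observation that the balanced edges at a vertex $v$ come entirely from the single tripod at $v$'s own level is the right starting point; child tripods contribute only \emph{tilted} edges to $v$, so the passage about ``how many incident child tripods'' is a red herring for the balanced-degree count.

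The genuine gap is the reconstruction step. You cannot simply ``delete from $H$ any balanced edge not needed'': by definition a semi-partial 3-tree arises by erasing only \emph{tilted} edges from a 3-tree, so every balanced edge of the ambient 3-tree must remain. What the paper does instead is, for each tripod $T$ at level $k{+}1$, build a \emph{new} tripod $T'$ on the same vertex set with $\Delta(T')\le 36\Delta$. The construction of $T'$ keeps (i) all edges of $T\cap G$, (ii) the outer boundary and spine of $T$, (iii) for each internal face $\Psi$ of $T$, the edge $uv$ whenever both $u$ and $v$ send a $G$-edge into the interior of $\Psi$; it then (iv) triangulates each remaining non-triangular face by a zigzag path. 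Rules (i)--(iii) give degree at most $3\Delta+9$, and (iv) at most triples this, yielding $9\Delta+27\le 36\Delta$; adding the at most $\Delta$ tilted $G$-edges gives $37\Delta$.

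The step your outline does not address, and which \emph{forces} rule (iii), is the re-insertion of the content at levels $>k{+}1$. When $T$ is replaced by $T'$, each subgraph $H_i$ that sat inside a face $\Psi_i$ of $T$ must be placed inside some face of $T'$, and for $G$ to survive, that face must contain on its boundary every vertex of the outer triangle to which $H_i$ is $G$-adjacent. Rule (iii) is precisely what guarantees such a face exists (and that $H_i$ is $G$-adjacent to at most two, consecutive, boundary vertices). Without this mechanism the charging scheme you sketch has nothing concrete to charge against: the $36\Delta$ bound comes not from an abstract count of $G$-edges near $v$, but from the explicit rules defining $T'$ together with the verification that deeper content can be re-routed.
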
   

\begin{proposition}
\label{pro-sp3t} For every $\Delta$ there is a set $S$ of at most 
$\cO(\Delta^5)$ slopes with the property that any plane semi-partial 3-tree 
of maximum degree $\Delta$ has a straight-line embedding whose edge-slopes all belong to~$S$.
\end{proposition}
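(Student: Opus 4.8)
The plan is to fix, once and for all, a slope set $S$ of size $\cO(\Delta^5)$ and then build the required straight-line embedding incrementally, following the level structure of the semi-partial 3-tree. By Lemma~\ref{lem:tripodOK} the underlying plane 3-tree arises from a sequence of tripod insertions into balanced faces, and the vertices inserted at level $k$ form tripods whose balanced edges lie entirely inside the balanced faces created at level $k-1$. I would therefore proceed by induction on the number of levels, drawing level $k$ by inserting, into each already-drawn balanced face, the tripod whose vertices have level $k$. Crucially, passing from the plane 3-tree to the semi-partial 3-tree only erases tilted edges, so it suffices to realize the full tripod structure at each level; the tilted edges that survive in the semi-partial 3-tree simply reuse a subset of the slopes we already employ. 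The degree bound $\Delta$ is what keeps each local picture controllable: at every corner of a face only $\cO(\Delta)$ tilted edges enter the inserted tripod, and every spine vertex carries only $\cO(\Delta)$ incident bubble vertices.

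To make the induction go through I would maintain the invariant that every balanced face awaiting a tripod is drawn as a triangle belonging to a bounded family of \emph{canonical} shapes (fixed up to translation and scaling), each of whose edges has a slope in $S$. The base case is the initial triangle of level $0$, drawn with three slopes of $S$. The heart of the argument is then the single-tripod subproblem: given a canonical triangle $\Phi$ with corners $x,y,z$, draw a tripod $T$ inside $\Phi$ so that (i) all balanced edges of $T$ (spine edges and bubble edges) receive slopes from $S$; (ii) all tilted edges joining vertices of $T$ to $x,y,z$ receive slopes from $S$; and (iii) every inner triangular face of $T$ is again canonical, so that the induction can continue. I would construct this by placing the central vertex near the centroid, routing each of the three spines as a monotone polygonal path toward the corresponding edge of $\Phi$, drawing each double bubble inside a thin strip around its spine edge, and finally routing the tilted edges: each spine vertex sends two segments to the two corners bounding its leg, and each bubble vertex sends one segment to the single corner it faces.

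The main obstacle is to satisfy (i) and (ii) simultaneously while preserving (iii) and planarity. Forcing a tilted edge from a tripod vertex $v$ to a corner to have a slope in $S$ constrains $v$ to lie on one of finitely many ``sightlines'' through that corner; a spine vertex, seeing two corners, is pinned to an intersection of two such sightlines, while at the same time its balanced edges to its spine- and bubble-neighbours must themselves be $S$-slopes. Reconciling the sightline constraints with the internal slope constraints, all inside $\Phi$ and without crossings, is the crux, and it is exactly here that the canonical-shape invariant is essential: by restricting the admissible triangle shapes, and by choosing $S$ to be closed under the bounded set of affine maps that carry the canonical triangles to a single reference triangle, one can guarantee that a consistent placement always exists. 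Because only $\cO(\Delta)$ edges must be routed at each corner and only $\cO(\Delta)$ bubble vertices hang off each spine vertex, there is ample slack, and $S$ can be taken as a product of a constant number of $\cO(\Delta)$-sized families — governing the fan of tilted edges at a corner, the fan of bubble attachments, the spine directions, and the reservoir of canonical orientations — which multiply out to the claimed $\cO(\Delta^5)$. I expect the verification of non-crossing, together with the exact arithmetic that realizes every required slope within this fixed $S$, to be the most delicate and calculation-heavy part of the proof.
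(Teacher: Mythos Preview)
Your high-level plan---induct on levels, insert one tripod per balanced face, maintain a bounded family of canonical face shapes---is exactly the skeleton the paper uses. But the single-tripod step, which you describe only in outline, is where all the work lies, and your sketch has two real gaps.

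First, a confusion: you say ``it suffices to realize the full tripod structure \dots; the tilted edges that survive simply reuse a subset'', and later ``each spine vertex sends two segments to the two corners''. A tripod has bounded degree but \emph{unbounded size}: its spines may be arbitrarily long and its bubbles arbitrarily large. If you actually drew the tilted edges to every spine and bubble vertex, the corners $x,y,z$ would have unbounded degree and you would need unboundedly many slopes. Only the tilted edges present in the semi-partial 3-tree must be drawn, and these meet at most $3\Delta$ \emph{relevant} tripod vertices; the paper isolates these vertices explicitly, and this distinction is essential for the slope count.

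Second, and more seriously, you give no construction for drawing the tripod itself. Asserting that spines can be ``routed as monotone polygonal paths'' and bubbles drawn ``in a thin strip'' with ``ample slack'' does not explain how an \emph{arbitrarily large} bubble is embedded with $\cO(\Delta)$ slopes and only $\cO(\Delta)$ face shapes, let alone how this is done while forcing the relevant vertices to land at controlled positions. That is the entire content of Lemmas~\ref{lem-irelev}, \ref{lem-adder}, \ref{lem-bubble} and the Tripod Drawing Lemma~\ref{lem-sp3t}. Your sightline idea---pin relevant vertices to intersections of rays through the corners---couples the tripod geometry to the enclosing triangle, and you do not say where the (arbitrarily many) irrelevant vertices go or why the resulting inner faces stay canonical. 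The paper avoids this coupling altogether: it draws every tripod in one \emph{fixed} way (spines on three fixed rays, relevant vertices on a fixed $\cO(\Delta^2)$-point set, inner faces of $\cO(\Delta^3)$ types), then scales and rotates this drawing into the face. The tilted-edge slopes are then determined by the pair (face shape, relevant-vertex position), and since both range over bounded sets one gets $\cO(\Delta^3)\cdot\cO(\Delta^2)=\cO(\Delta^5)$. Your ``product of $\cO(\Delta)$-sized families'' is too vague to yield any specific exponent, and closing $S$ under ``affine maps between canonical triangles'' does not preserve slopes unless those maps are similarities.
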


We begin by proving Proposition~\ref{pro-semi}.

\subsection{Proof of Proposition~\ref{pro-semi}}

We begin by a simple lemma, which shows that the deletion of tilted edges 
from a plane 3-tree does not affect the level of vertices.

\begin{lemma}\label{lem-level}
Let $H=(V,E)$ be a plane 3-tree, let $T$ be a set of tilted edges of $H$, let 
$G=(V,E\setminus T)$ be a semi-partial 3-tree. Let $v$ be a vertex of level $k$ 
with respect to $H$. Then $v$ has level $k$ in $G$ as well.
\end{lemma}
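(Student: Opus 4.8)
The plan is to understand the relationship between levels in $H$ and in $G$ carefully. Recall that the level of a vertex $v$ in a plane partial 3-tree is the smallest $k$ such that some $k$-element vertex set $V_0$ can be removed to place $v$ on the outer face of $G - V_0$. Since $G = (V, E \setminus T)$ is obtained from $H$ by deleting edges, and deleting edges can only make it easier to push a vertex to the outer face, I expect the level of $v$ in $G$ to be at most its level in $H$; that is, $\mathrm{level}_G(v) \le \mathrm{level}_H(v) = k$. The genuine content of the lemma is the reverse inequality $\mathrm{level}_G(v) \ge k$, i.e.\ deleting \emph{tilted} edges never decreases any level. I would isolate these two directions at the outset.

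For the easy direction, I would argue directly from the definition: if $V_0$ is a witnessing set of size $k$ for $v$ in $H$ (so $v$ lies on the outer face of $H - V_0$), then because $G$ is a subgraph of $H$ on the same vertex set, $G - V_0$ is a subgraph of $H - V_0$, and removing edges cannot pull an outer-face vertex into the interior. Hence $v$ is still on the outer face of $G - V_0$, giving $\mathrm{level}_G(v) \le k$. (One must check this respects the fixed plane embedding, but deleting edges leaves the rest of the embedding intact, so a vertex incident to the outer face stays incident to it.)

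The hard direction, and the main obstacle, is showing $\mathrm{level}_G(v) \ge k$. Here I would exploit the structure of tilted edges and Lemma~\ref{lem:tripodOK}. The key structural fact is that in a plane 3-tree, the tilted edges are exactly the edges joining a tripod (whose vertices all share level $j$) to the triangle of level $j-1$ into which it was inserted, whereas the balanced edges stay within a single level. Since $T$ contains only tilted edges, all balanced edges of $H$ survive in $G$. I would then try to show by induction on $k$ that any set $V_0$ with $|V_0| < k$ fails to expose a level-$k$ vertex of $H$ to the outer face of $G - V_0$. The natural tool is a ``barrier'' or separation argument: in $H$, every path from $v$ to the outer face must pass through vertices of all intermediate levels $0,1,\dots,k-1$, and I would argue that enough of this barrier structure is preserved by the surviving balanced edges (together with the tripod geometry) that at least $k$ vertices must be removed to break every such barrier around $v$.

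The technical crux I anticipate is making ``barrier'' precise in the partial setting, since after deleting tilted edges the convenient shortest-path characterization of level (valid in plane 3-trees but explicitly \emph{not} valid in partial 3-trees, as the excerpt warns) is unavailable. I would therefore lean on the embedding: a level-$j$ tripod, inserted into a level-$(j{-}1)$ balanced triangle $\Phi$, is enclosed by $\Phi$, and its bounding triangle together with the balanced edges of $\Phi$ forms a cycle in $G$ separating $v$ from the outer face. Iterating this nesting of balanced cycles across levels $k, k{-}1, \dots, 1$ should yield $k$ nested separating cycles whose interiors contain $v$; exposing $v$ to the outer face then forces removing at least one vertex from each, and since these cycles can be chosen to lie in pairwise distinct levels they are disjoint, giving the needed $k$ deletions. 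Verifying that these separating cycles are indeed present in $G$ (i.e.\ built only from balanced, hence surviving, edges) and are pairwise disjoint is the step I expect to require the most care.
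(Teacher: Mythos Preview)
Your proposal is correct and follows essentially the same route as the paper: the easy direction is immediate from $G\subseteq H$, and for the hard direction the paper also invokes Lemma~\ref{lem:tripodOK} to obtain $k$ nested triangles $C_0,\dots,C_{k-1}$ of balanced edges (one at each level $0,\dots,k-1$) separating $v$ from the outer face, all of which survive in $G$. The only difference is that the paper is terser---it simply states the existence of these nested balanced triangles and concludes, whereas you spell out the disjointness-and-deletion count more explicitly.
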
                                                
\begin{proof}                                                               
Fix a vertex $v$ of level $k$ in $H$. Of course, the deletion of an edge 
may only decrease the level of a vertex, so $v$ has level at most $k$ in $G$. 
On the other hand, it follows from Lemma~\ref{lem:tripodOK}
that every vertex of level $k$ in $H$ is separated from the outer face by $k$ 
nested triangles $C_0, C_1,\dotsc C_{k-1}$, where $C_i$ is a triangle formed 
by balanced edges that belong to level $i$. Since every balanced edge of $H$ 
belongs to $G$ as well, we know that all the triangles $C_0, C_1,\dotsc 
C_{k-1}$ belong to $G$, showing that $v$ has level at least $k$. It follows 
that the level of $v$ is preserved by the deletion of tilted edges.
\end{proof}                                                        

Let $G=(V,E)$ be a plane semi-partial 3-tree obtained from a plane 3-tree 
$H=(V,E')$ by the deletion of several tilted edges. As a consequence of the 
previous lemma, we see that an edge $e\in E$ is tilted in $G$ if and only if 
it is tilted in $H$.

Assume now that $F$ is a connected plane partial 3-tree with maximum 
degree~$\Delta\ge 1$ and at least three vertices. Our goal is to show that 
there is a plane semi-partial 3-tree $G$ with maximum degree at most 
$37\Delta$ that contains $F$ as a spanning subgraph. The following definition 
introduces the key notion of our proof.

\begin{definition}
Let $F$ be a connected plane partial 3-tree with maximum degree $\Delta$, and 
let $k$ be an integer. We say that a 3-tree $H$ \emph{correctly covers $F$ up 
to level $k$}, if the following conditions are satisfied:
\begin{itemize}
\item $F$ is a spanning subgraph of $H$.
\item Let $V^{\le k}$ denote the set of vertices that have level at most $k$ in 
$H$. For every vertex $v\in V^{\le k}$ there are at most $36\Delta$ balanced 
edges of $H$ that are incident to $v$.
\end{itemize}                        
Furthermore, we say that $H$ \emph{correctly covers $F$ at all levels} if, 
for any $k$, $H$ correctly covers $F$ up to level~$k$.
\end{definition}
                                   
As mentioned before, 
Kratochv\'\i l and Vaner~\cite{kv2010}
have shown that 
every plane partial 3-tree $F$ is a spanning subgraph of a plane 3-tree $H$. 
Note that such a 3-tree $H$ correctly covers $F$ up to level 0, because every 
vertex at level 0 is adjacent to two balanced edges.

Our proof of Proposition~\ref{pro-semi} is based on the following lemma.

\begin{lemma}\label{lem-cover}
For every connected partial 3-tree $F$ there is a 3-tree $H$ that correctly 
covers $F$ at all levels.
\end{lemma}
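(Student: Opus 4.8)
The plan is to prove the lemma by induction on the level, building a sequence of $3$-trees $H_0, H_1, H_2, \dots$ in which $H_k$ correctly covers $F$ up to level $k$; since $F$ is finite there is a maximum level $K$, and $H_K$ then correctly covers $F$ at all levels. For the base case I would take the plane $3$-tree supplied by the result of Kratochv\'\i l and Vaner, in which every level-$0$ vertex lies on the outer triangle and hence carries only two balanced edges. For the inductive step I want to pass from $H_k$ to $H_{k+1}$ by re-triangulating only the interiors of the balanced faces of level $k$, leaving everything of level $\le k$ (and in particular all balanced edges incident to vertices of level $\le k$) untouched. The crucial bookkeeping observation is that this is monotone: the balanced edges of level $k+1$ are exactly the edges of the level-$(k+1)$ tripods, and every later modification happens strictly inside a balanced face of level $k+1$ or deeper, which leaves those tripod edges intact. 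Hence once the balanced degrees at level $k+1$ are made small they stay small, and it suffices to solve one local problem: given a balanced face $\Phi=xyz$ of $H_k$, re-triangulate its interior so that the resulting level-$(k+1)$ tripod has balanced degree at most $36\Delta$ at each of its vertices, while preserving every edge of $F$.

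For the local problem I would first record two structural facts forced by $F\subseteq H$. First, in any $3$-tree extension only the tripod (level-$(k+1)$) vertices are adjacent to the boundary $\partial\Phi=\{x,y,z\}$, so every interior vertex of $F$ that has an $F$-edge to $x$, $y$, or $z$ is forced to lie in the tripod; call this set $S$, and note $|S|\le\deg_F(x)+\deg_F(y)+\deg_F(z)\le 3\Delta$. Second, each vertex of level $\ge k+2$ is separated from $\partial\Phi$ by the tripod, so it can only be adjacent to the three level-$(k+1)$ vertices of the balanced face that contains it; consequently any connected piece of the $F$-structure lying strictly below level $k+1$ sends its $F$-edges to at most three level-$(k+1)$ vertices, and every level-$(k+1)$ vertex $v$ is $F$-adjacent to at most $\deg_F(v)\le\Delta$ such deeper pieces.

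With these facts in hand I would build the tripod as follows. Arrange the forced vertices of $S$ into three spine paths emanating from a common central vertex, ordered to follow how they attach to the three sides of $\Phi$ and joined by added balanced spine edges; this connects $S$ into a genuine tripod and uses each forced vertex with spine-degree $\cO(1)$. Then, at each level-$(k+1)$ vertex $v$, attach a fan of bubbles that creates one balanced face for each of the at most $\Delta$ deeper $F$-pieces adjacent to $v$, promoting the root of each such piece to a bubble vertex so that no new vertices are invented, and recurse into the bubble faces, which hold the deeper structure and are treated at subsequent levels. The balanced degree of $v$ is then bounded by its balanced $F$-edges ($\le\Delta$), its $\cO(1)$ spine and central edges, and the fan and double-bubble edges needed to host its deeper pieces ($\cO(\Delta)$); accounting for the three legs and the two sides of each double bubble, these sum to at most $36\Delta$, which is where the constant comes from.

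The main obstacle is exactly this last construction, which must simultaneously (i) keep every edge of $F$, (ii) expose enough balanced faces to host all of the deeper structure, and (iii) keep the per-vertex balanced degree $\cO(\Delta)$. These requirements pull against each other, since hosting many deeper pieces naively wants many incident faces, hence high degree, while no new vertices may be added. The resolution I would pursue is to distribute the hosting according to $F$-degree: each balanced face I create is charged to a distinct $F$-edge leaving $v$ toward a deeper piece, so that $v$ is incident to only $\cO(\deg_F(v))$ balanced faces, and the fan rims reuse the roots of the deeper pieces rather than fresh vertices. Making this assignment consistent with the prescribed plane embedding (the legs and fans must respect the rotation at $\Phi$ and at each $v$) and then verifying that the constant really comes out at most $36$ is the technical heart of the argument.
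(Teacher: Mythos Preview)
Your inductive framework---starting from the Kratochv\'\i l--Vaner $3$-tree (which correctly covers $F$ up to level $0$) and fixing level $k+1$ by modifying only the interiors of the balanced level-$k$ faces---is exactly the paper's strategy. The divergence is in the local step. The paper does \emph{not} rebuild the spine from the forced set $S$: it keeps the old tripod $T$ on the \emph{same vertex set}, with the same spine and the same outer boundary, and only re-triangulates the interior of $T$ via five explicit rules: retain all $F$-edges of $T$, the outer-face edges of $T$, and the spine of $T$; add the edge $uv$ whenever $u$ and $v$ both send an $F$-edge into the interior of a common inner face of $T$; then zigzag-triangulate each remaining non-triangular face. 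The fourth rule contributes at most $2\Delta$ edges per vertex (each such edge is charged to an incident $F$-edge heading into the face), the zigzag at most triples degrees, and this is where $36\Delta$ comes from. The deeper subgraphs $H_i$, which already sit inside faces of the old $T$, are then redistributed into faces of the new $T'$; the observation that makes this possible is that inside any non-triangular face $\Psi$ of the partial tripod $T'_0$, a piece $H_i$ can be $F$-adjacent to at most two vertices of $\partial\Psi$, since three would have triggered the fourth rule and forced $\Psi$ to be a triangle.

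Your construction instead discards the old tripod and takes only $S$ (plus a centre) as the new spine. This leaves a real gap: the level-$(k+1)$ vertices of $H$ that lie outside $S$ are not placed anywhere. They carry balanced $F$-edges among themselves that must survive in $H'$, and your sketch neither puts them into the new tripod nor explains how they are absorbed into the ``deeper pieces'' while keeping each such piece $F$-adjacent to at most three new-tripod vertices---your structural claim about pieces is stated for the \emph{old} level partition and need not persist once former tripod vertices are demoted. The fan-at-$v$ hosting also does not obviously produce a single balanced face containing all three anchors $u,v,w$ of a piece that is attached to three tripod vertices. Keeping $V(T')=V(T)$, with the original spine and outer boundary, is precisely the device the paper uses to make both of these problems disappear.
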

                         
Before we prove the lemma, let us show how it implies 
Proposition~\ref{pro-semi}.

\begin{proof}[Proof of Proposition~\ref{pro-semi} from Lemma~\ref{lem-cover}]
Let $F$ be a plane partial 3-tree of maximum degree $\Delta$, and let $H$ be 
the 3-tree that correctly covers $F$ at all levels. Define a semi-partial 
3-tree $G$ which is obtained from $H$ by erasing all the tilted edges of $H$ 
that do not belong to $F$. By construction, $G$ is a semi-partial 3-tree that 
contains $F$ as a subgraph. Moreover, every vertex of $G$ is adjacent to at 
most $\Delta$ tilted edges and at most $36\Delta$ balanced edges, so $G$ has 
maximum degree at most $37\Delta$.
\end{proof}    

Let us now turn to the proof of Lemma~\ref{lem-cover}.

\begin{proof}   
Let $F$ be a partial 3-tree with maximum degree $\Delta$, and assume for 
contradiction that there is no graph $H$ that would correctly cover~$F$. Let 
$k$ be the largest integer such that there is a graph $H$ that correctly 
covers $F$ up to level~$k$. We have seen that $k\ge 0$. On the other hand, we 
clearly have $k<|V(F)|$. Thus, $k$ is well defined.

Fix a graph $H$ correctly covering $F$ up to level $k$. By our assumption, 
$H$ has vertices of level greater than~$k$.  We will now define a 3-tree $H'$ 
that correctly covers $F$ up to level $k+1$, which contradicts the maximality 
of~$k$. 

Note that it is sufficient to ensure that 
$H'$ is constructed by a sequence of balanced tripod insertions in 
which all the tripods inserted at level at most $k+1$ have degrees 
bounded by $36\Delta$.

We construct $H'$ in such a way that it coincides with~$H$ on vertices of 
level at most $k$; more precisely, if $u$ and $v$ are two vertices of level at 
most $k$ in $H$, then $u$ and $v$ are connected by an edge of $H'$ if and only 
if they are connected by an edge of $H$. Notice that this property guarantees 
that the vertices at level at most $k$ in $H$ are at the same level in $H'$ as 
in $H$. Let $H^{\le k}$ be the subgraph of $H$ induced by the vertices of 
level at most $k$. $H^{\le k}$ is a 3-tree.

Let $\Phi$ be a balanced face of $H^{\le k}$ formed by vertices at level $k$ 
which contains at least one vertex of $H$ at level $k+1$ in its interior. Note 
that at least one such face exists, since we assumed that at least one vertex 
has level greater than $k$ in $H$. For any such face $\Phi$, we will modify 
the sequence of tripod-insertions performed inside $\Phi$, such that the 
tripod inserted into this face has maximum degree at most $36\Delta$, while 
the modified graph will still contain $F$ as a subgraph. By doing this modification 
inside every nonempty balanced face at level $k$, we will eventually obtain a 
graph $H'$ that correctly covers $F$ up to level $k+1$.

Fix $\Phi$ to be a balanced face at level $k$ with nonempty interior. Let 
$T\subset H$ be the tripod that has been inserted into $\Phi$ during the 
construction of $H$. Let $V_T$ and $E_T$ be the vertices and the edges of $T$. 
We will now define a modified tripod $T'$ on the vertex set $V_T$, 
satisfying the required degree bound. We will then show that the sequence of 
tripod insertions that have been performed inside $T$ during the 
construction of $H$ can be transformed into a sequence of tripod insertions 
inside $T'$, where the new sequence of insertions yields a graph $H'$ that 
contains $F$ as a subgraph.                              

We define $T'$ by the following rules.
\begin{enumerate}         
\item All the edges of $T$ that belong to $F$ are also in~$T'$.
\item All the edges of $T$ that belong to the boundary of the outer face 
of $T$ also belong to~$T'$. These edges form the boundary of the outer face 
of~$T'$.
\item All the edges that form the spine of $T$ also belong to $T'$ and they 
form its spine.
\item 
Let $\Psi$ be an internal face of the tripod $T$. Let $u$, $v$ and $w$ be the 
three vertices of~$\Psi$. Assume that both $u$ and $v$ are connected by an 
edge of $F$ to a vertex in the interior of $\Psi$ (not necessarily both of 
them to the same vertex). In such case, add the edge $uv$ to $T'$.
\item 
Let $T'_0$ be the graph formed by all the edges added to $T'$ by the previous 
four rules. Note that $T'_0$ is an outerplanar graph with the same outer face 
as $T$. However, not all the inner faces of $T'_0$ are necessarily triangles, 
so $T'_0$ is not necessarily a tripod. Assume that $T'_0$ has an inner face 
with more than three vertices, and that $v_0, v_1,\dotsc, v_r$ are the 
vertices of this face, listed in cyclic order. We form the path $v_1,v_r, v_2, 
v_{r-1}, v_3, v_{r-2},\dotsc$ whose edges triangulate the face of $T'_0$. We 
add all the edges of this path into $T'$. We do this for every internal face 
of $T'_0$ that has more than three vertices. The resulting graph $T'$ is 
clearly a tripod.
\end{enumerate}

Let us now argue that the tripod $T'$ has maximum degree at most 
$36\Delta$. Let $v\in V_T$ be any vertex of this tripod. Let us estimate 
$\deg_{T'}(v)$, by counting the edges adjacent to $v$ that were added to $T'$ 
by the rules above. Clearly, there are at most $\Delta$ such edges that were 
added by the first rule, and at most nine such edges that were added by the 
second and third rule.

We claim that there are at most $2\Delta$ edges incident with $v$ added by the 
fourth rule. To see this, notice that if $e=uv$ is an edge added by this rule, 
then at least one of the two faces of $T$ that are incident to $e$ must 
contain in its interior an edge $e'$ of $F$ that is incident to $v$. In such situation,
we say that $e'$ is \emph{responsible} for the insertion of $e$ into~$T'$.
Clearly, an edge of $F$ may be responsible for the insertion of at most two edges incident with~$v$. Since $v$ has degree at most $\Delta$ in $F$, this shows that at most
$2\Delta$ edges incident with $v$ are added to $T_0'$ by the fourth rule. 
Consequently, $T'_0$ has maximum degree at most $3\Delta+9$.

To estimate the number of edges added to $T'$ by the fifth rule, it is 
sufficient to observe that in every internal face of $T'_0$ whose boundary 
contains $v$ there are at most two edges of $T'$ incident to $v$ added by the 
fifth rule. Thus, $\Delta(T')\le 3\Delta(T'_0)\le 9\Delta+27\le 36\Delta$, as claimed. 
                  
Having thus defined the tripod $T'$, we modify the graph $H$ as follows. We 
remove all the vertices appearing in the interior of the face $\Phi$ of 
$H^{\le k}$; that is, we remove the tripod $T$ as well as all the vertices 
inserted inside $T$. Instead, as a first step towards the construction of 
$H'$, we insert $T'$ inside $\Phi$.

To finish the construction of $H'$, we need to insert the vertices of level 
greater than $k+1$ into the faces of $T'$, so that the resulting graph 
contains $F$ as a subgraph. We perform this insertion separately inside every 
face of $T'_0$. Note that $T'_0$ is a subgraph of $T$ as well as a subgraph of 
$T'$, and that each internal face of $T'_0$ is a union of several faces of $T'$. Let $\Psi$ be a face of $T'_0$. If $\Psi$ is a triangle, then $\Psi$ is 
in fact a face of $T'$ as well as a face of~$T$. If $T$ contains a subgraph 
$H_\Psi$ inside $\Psi$, we define $H'$ to contain the same subgraph inside 
$\Psi$ as well. Since $H_\Psi$ has been created by a sequence of 
tripod insertions inside $\Psi$, we can perform the same sequence of tripod 
insertion again inside the same face during the construction of $H'$.

Assume now that $\Psi$ is not a triangle. In the graph $H$, the face $\Psi$ is 
subdivided into a collection of triangular faces 
$\Psi_1, \Psi_2, \dotsc,\Psi_k$. Let $H_i$ be the subgraph of $H$ appearing 
inside the face $\Psi_i$ in $H$. We know that each $H_i$ is a result of a 
sequence of tripod insertions. 

Let us use the following terminology: if there is an edge of $F$ that connects 
a vertex of $H_i$ to a vertex $v$ on the boundary of $\Psi$, we say that 
$H_i$ is \emph{adjacent} to~$v$. Since the graph $F$ is connected, each 
nonempty graph $H_i$ must be adjacent to at least one vertex on the boundary of 
$\Psi$. Observe that if $H_i$ is adjacent to two distinct vertices $u$ and $v$ 
on the boundary of $\Psi$, then the edge that connects $u$ and $v$ must belong 
to $T'_0$ by the fourth rule in the construction of $T'$. In particular, $u$ 
and $v$ appear consecutively on the boundary of $\Psi$. This also shows that 
$H_i$ cannot be adjacent to three distinct vertices of $\Psi$, since we 
assumed that $\Psi$ is not a triangle. 

Consider now the tripod $T'$. In this tripod, the face $\Psi$ is triangulated 
into a collection of faces $\Psi'_1, \Psi'_2,\dotsc,\Psi'_k$. Each of these 
triangular faces has at least one edge of $T'_0$ on its boundary. We will 
insert the graphs $H_1, H_2,\dotsc, H_k$ into these faces, by performing for 
each $H_i$ a sequence of tripod insertions which generates $H_i$ inside one of 
the faces $\Psi'_1, \Psi'_2,\dotsc,\Psi'_k$. 

To ensure that the resulting graph will contain $F$ as a subgraph, it suffices 
to guarantee that whenever $H_i$ is adjacent to a vertex $v\in\Psi$, it will 
be inserted into a face $\Psi'_j$ that contains $v$ on its boundary. Such a 
face always exists, since each $H_i$ is adjacent to at most two vertices of 
$\Psi$, and if it is adjacent to two vertices $u, v$, then the two vertices 
must be connected by an edge on the boundary of $\Psi$, which implies that 
there is a face $\Psi'_j$ that contains both $u$ and $v$ on its boundary.

It may happen that two distinct graphs $H_i$ and $H_j$ need to be inserted 
into the same face $\Psi'_\ell$. In such case, the first graph is inserted 
directly into $\Psi'_\ell$, thus partitioning it into several smaller 
triangular subfaces, while all subsequent graphs that need to be inserted into 
$\Psi'_\ell$ are inserted into an appropriately chosen subface of 
$\Psi'_\ell$. This subface need not be balanced. We choose this subface in 
such a way that we preserve the cyclic order of edges of $F$ around every 
vertex $v$ on the boundary of $\Psi$. 

After we perform the construction above inside every face $\Psi$ of $T'_0$, we 
obtain a plane 3-tree $H'$ that correctly covers $F$ up to level $k+1$. This 
completes the proof of the lemma.
\end{proof}


\subsection{Proof of Proposition~\ref{pro-sp3t}}

To complete the proof of our main result, it remains to show that every plane 
semi-partial 3-tree of bounded maximum degree has a straight-line embedding with a bounded 
number of slopes. 

We start with a brief overview of the construction. We will use the fact that 
a plane semi-partial 3-tree $G$ can be decomposed into tripods formed by 
vertices of the same level, with each tripod $T$ of level $k\ge 1$ being inserted into a 
triangle $\Phi$ formed by vertices of level~$k-1$. The triangle $\Phi$ is 
itself an inner face of a tripod of level~$k-1$.

The tripods appearing in this decomposition of $G$ may be arbitrarily large. 
However, a tripod $T$ of level $k\ge 1$ has only a bounded number of vertices 
that are adjacent to a vertex of the triangle $\Phi$ of level~$k-1$. These 
vertices of $T$ will be called \emph{relevant vertices}.

Given a tripod $T$ in the decomposition of $G$, we will construct an 
embedding of $T$ that only uses edge-slopes from a set of slopes $S'$ and 
moreover, all the relevant vertices of $T$ are embedded on points from a set 
of points $P'$, where the sets $S'$ and $P'$ are independent of $T$ and 
their size is polynomial in~$\Delta$. 

We will then show that these embeddings of tripods (after a suitable scaling) 
can be nested into each other to provide the embedding of the whole graph~$G$. 
We will argue that the number of edge-slopes in this embedding of $G$ is 
bounded. This will follow from the fact that the balanced edges of $G$ belong 
to a tripod and their slope belongs to~$S'$, while the slopes of the tilted 
edges only depend on the positions of the relevant vertices of a tripod $T$ 
and on the shape of the triangle~$\Phi$ surrounding~$T$. Since the relevant 
vertices can only have a bounded number of positions, and the triangle $\Phi$ 
is formed by balanced edges and hence may have only a bounded number of 
shapes, we will conclude that the tilted edges may only determine a bounded number 
of slopes.

Let us now describe the construction in detail. We recall that $\Delta$ is a 
fixed constant throughout this section, and we let $\Sp(\Delta)$ denote the 
set of plane semi-partial 3-trees of maximum degree at most~$\Delta$. Any 
graph $G\in\Sp(\Delta)$ can be created by a sequence of \emph{partial tripod 
insertions} into balanced faces, where a partial tripod insertion is defined 
in the same way as an ordinary tripod insertion, except that some of the 
tilted edges are omitted when the new tripod is inserted. 

Choose a graph $G\in \Sp(\Delta)$, and assume that $T$ is a tripod that is 
used in the construction of $G$ by a sequence of partial tripod insertions. 
Let $\{x,y,z\}$ be the triangle in $G$ into which the tripod $T$ has been 
inserted. We say that a vertex $v$ of $T$ is \emph{relevant} if $v$ is 
connected by an edge of $G$ to at least one of the vertices $x,y$ or $z$. 
Since each of the three vertices $x$, $y$ and $z$ has degree at most 
$\Delta$, the tripod $T$ has at most $3\Delta$ relevant vertices. Let us 
further say that a bubble of $T$ is relevant if it contains at least one 
relevant vertex. Since every vertex of $T$ is contained in at most six 
bubbles, we see that $T$ has at most $18\Delta$ relevant bubbles.

We will use the term \emph{labelled tripod of degree $\Delta$} to denote a 
tripod $T$ with maximum degree at most $\Delta$, together with an associated 
set of at most $3\Delta$ relevant vertices of $T$. Let $\Tr(\Delta)$ be the 
(infinite) set of all the labelled tripods of degree~$\Delta$. Similarly, a 
\emph{labelled bubble of degree $\Delta$} is a bubble of maximum degree at 
most $\Delta$, together with a prescribed set of at most $3\Delta$ relevant 
vertices. $\B(\Delta)$ denotes the set of all such labelled bubbles.

Let $\Em_T$ be an embedding of a tripod in the plane, and let $v$ be a vertex 
of $\Em_T$. Let $\alpha\in\langle0,2\pi)$ be a directed slope. We say that the 
vertex $v$ has \emph{visibility in direction $\alpha$}, if the ray starting in 
$v$ and having direction $\alpha$ does not intersect $\Em_T$ in any point 
except~$v$. 

Throughout the rest of this section, let $\varepsilon$ denote the 
value~$\pi/100$ (any sufficiently small integral fraction of $\pi$ is suitable 
here).

Our proof of Proposition~\ref{pro-sp3t} is based on the following key lemma.

\begin{lemma}[Tripod Drawing Lemma]\label{lem-sp3t}    
For every $\Delta$ there is a set of slopes $S$ of size $\cO(\Delta^3)$, a set 
of points $P$ of size $\cO(\Delta^2)$, and a set of triangles $R$ of size 
$\cO(\Delta^3)$, such that every labelled tripod $T\in\Tr(\Delta)$ has a 
straight-line embedding $\Em_T$ with the following properties:
\begin{enumerate}
\item 
The slope of any edge in the embedding $\Em_T$ belongs to~$S$.
\item 
Each relevant vertex of $\Em_T$ is embedded on a point from~$P$.
\item 
Each internal face of $\Em_T$ is homothetic to a triangle from~$R$.
\item                                                           
The central vertex of $\Em_T$ is embedded in the origin of the 
plane.
\item 
Any vertex of $\Em_T$ is embedded at a distance at most $1$ from the origin.
\item
Each spine of $T$ is embedded on a single ray starting from the origin. The three 
rays containing the spines have directed slopes $0$, $2\pi/3$ and~$4\pi/3$. 
Let these three rays be denoted by $r_1$, $r_2$ and $r_3$, respectively.  
\item 
Let $\uhel{i}{j}$ denote the closed convex region whose boundary is formed by 
the rays $r_i$ and~$r_j$. Any relevant vertex of $\Em_T$ embedded in the 
region $\uhel{1}{2}$ (or $\uhel{2}{3}$, or $\uhel{1}{3}$) has visibility in 
any direction from the set $\langle \varepsilon, 2\pi/3-\varepsilon\rangle$ 
(or $\langle 2\pi/3+\varepsilon, 4\pi/3-\varepsilon\rangle$, or $\langle 
4\pi/3+\varepsilon,2\pi-\varepsilon\rangle$, respectively). 

Note that the three regions $\uhel{1}{2}$, $\uhel{2}{3}$ and $\uhel{1}{3}$ 
are not disjoint. For instance, if a relevant vertex of $T$ is embedded 
on the ray $r_1$, it belongs to both $\uhel{1}{2}$ and $\uhel{1}{3}$, and 
hence it must have visibility in any direction from the set $\langle 
\varepsilon, 2\pi/3-\varepsilon\rangle\cup \langle 
4\pi/3+\varepsilon,2\pi-\varepsilon\rangle$. 
\end{enumerate}
\end{lemma}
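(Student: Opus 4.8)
The plan is to build the embedding $\Em_T$ hierarchically, following the recursive structure of the tripod: a tripod consists of three legs sharing the central vertex, each leg is a spine path carrying a double bubble over every spine edge, and each bubble is a triangulated polygon that decomposes recursively through its root triangle. I would first fix the skeleton and then fill in the bubbles. To lay out the skeleton, place the central vertex at the origin (property 4) and embed the three spines on the rays $r_1,r_2,r_3$ at directed slopes $0,2\pi/3,4\pi/3$ (property 6). Along each ray I place the consecutive spine vertices at distances given by the partial sums of a fixed geometric series, so that they converge to a point at distance at most $1$ (property 5) and the ``slot'' available for the double bubble over the $i$-th spine edge is a scaled-down copy of the slot over the first edge. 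Each slot is a thin isosceles triangle hugging the ray, of half-angle well below $\varepsilon$; this keeps the whole leg inside a thin cone around its ray, which is what later frees up the outward visibility cones required in property 7.

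The heart of the argument is a \emph{Bubble Drawing Lemma}: every labelled bubble of degree $\Delta$ can be drawn inside a prescribed thin triangular slot, with its root edge on the base of the slot, so that (i) all edge-slopes come from a fixed set, (ii) every internal face is homothetic (scaled, not rotated) to one of a fixed set of triangles, (iii) each relevant vertex lands on one of a fixed set of points, and (iv) each relevant vertex can see out of the open side of the slot. I would prove this by induction on the weak dual tree of the bubble. If the bubble is trivial there is nothing to do; otherwise let $uvw$ be the internal face incident to the root edge $uv$. I place $w$ at a point making $uvw$ homothetic to one of the allowed shapes, which splits the bubble into two sub-bubbles rooted at $uw$ and $wv$, and I recurse, drawing each sub-bubble inside a scaled slot attached below the corresponding edge. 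The crucial design constraint is that this recursion must reuse one fixed finite set of edge directions at \emph{every} depth; otherwise each level would introduce fresh slopes and the slope count would blow up with the (unbounded) size of the bubble. I would achieve this via a self-similar family of slot shapes that is closed under the split operation, so that across the whole recursion only finitely many slot orientations, and hence finitely many face shapes, ever occur. The local fan of triangles around a single vertex of degree at most $\Delta$ forces on the order of $\Delta$ distinct scaled shapes there, and a careful accounting over the bounded family of slot orientations and the few structural roles a vertex can occupy gives the $\cO(\Delta^3)$ bound on $R$, and hence on $S$; the relevant vertices, at most $3\Delta$ of them, are confined to an $\cO(\Delta)\times\cO(\Delta)$ grid of candidate positions, giving $|P|=\cO(\Delta^2)$.

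Assembling the legs into a tripod and verifying visibility comes next. Having drawn both bubbles of each double bubble into the two thin slots flanking its spine edge, and having scaled the slots geometrically along the ray, the entire leg lies in a thin cone around its ray, and the three legs occupy the three disjoint $120^\circ$ sectors. For the visibility requirement (property 7) I route the recursion so that each of the at most $3\Delta$ relevant vertices is placed on the outer boundary of its slot, facing away from the origin; because every slot has half-angle below $\varepsilon$ and all three legs hug their rays, a relevant vertex in the region $\uhel{1}{2}$ (resp.\ $\uhel{2}{3}$, $\uhel{1}{3}$) then has an unobstructed sightline in every direction of the corresponding cone $\langle \varepsilon, 2\pi/3-\varepsilon\rangle$, and the union case on the rays $r_1,r_2,r_3$ is handled identically. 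Finally I would collect all slopes, points, and triangle shapes used across the three legs and confirm the stated cardinalities.

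The step I expect to be the main obstacle is the Bubble Drawing Lemma, specifically keeping the number of slopes and face-shapes bounded by a fixed polynomial in $\Delta$ that is independent of the size of the bubble. A naive recursive (binary-tree) drawing introduces a new pair of directions at every level and so uses a number of slopes growing with the depth of the dual tree; the entire difficulty is to find a self-similar placement that reuses one fixed finite set of directions at all depths while still respecting the homothety constraint on faces and the degree-$\Delta$ fans around individual vertices. Reconciling the no-rotation (homothety) requirement with the need to attach sub-bubbles below edges of several different slopes is the delicate point, and it is also where the slope count becomes genuinely polynomial in $\Delta$ rather than constant.
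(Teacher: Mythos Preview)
Your proposal has a concrete gap in the spine layout and leaves the core bubble lemma essentially unproved.

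\textbf{The spine layout breaks the bound on $|P|$.} You place consecutive spine vertices at partial sums of a fixed geometric series. But a leg can have arbitrarily many spine edges, and the relevant bubbles can sit over \emph{any} of them. With geometric spacing, the root edge of the $k$-th double bubble lies at a position that depends on $k$, and $k$ is unbounded across $\Tr(\Delta)$. Hence the possible root positions of relevant bubbles form an infinite set, and so do the candidate positions for relevant vertices inside them; no fixed $P$ of size $\cO(\Delta^2)$ can cover all tripods. The paper avoids this by a different partition of the leg: it groups the double bubbles into at most $D=\cO(\Delta)$ \emph{parts}, where each relevant double bubble is its own part and each maximal run of irrelevant double bubbles is compressed into one part. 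Every part then gets a spine segment of the same fixed length $1/D$, so a relevant bubble's root can occupy only $\cO(\Delta)$ positions along the ray, which is what makes $|P|=\cO(\Delta)\cdot\cO(\Delta)=\cO(\Delta^2)$ work.

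\textbf{The Bubble Drawing Lemma is not actually established.} You correctly flag the recursive split as the obstacle, but ``a self-similar family of slot shapes closed under the split operation'' is an assumption, not a construction, and the homothety requirement on faces makes it genuinely hard: after one split the two child root edges have different slopes, so a naive recursion cannot reuse the same slot shape without rotation. The paper's bubble lemma does \emph{not} recurse on the dual tree in your sense. Instead it separates the dual into a bounded-size ``central'' subtree (containing all relevant and branching faces) and peripheral subtrees. It places the $\cO(\Delta)$ priority vertices (those on relevant/branching faces) on a fixed convex arc of $\cO(\Delta)$ points, fills the paths of the central subtree between them using a dedicated \emph{adder} lemma (embedding a path-shaped bubble into a prescribed quadrilateral with $\cO(\Delta)$ slopes), and only then attaches the peripheral sub-bubbles via the easy irrelevant-bubble construction. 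The slope and triangle counts of $\cO(\Delta^3)$ come from the $\cO(\Delta^2)$ possible head/tail positions for adders times $\cO(\Delta)$ slopes per adder, not from any self-similarity of a single recursive scheme. Your outline does not supply a substitute for this machinery.
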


Before we prove Lemma~\ref{lem-sp3t}, we show how the lemma implies 
Proposition~\ref{pro-sp3t}.

\begin{proof}[Proof of Proposition~\ref{pro-sp3t} from Lemma~\ref{lem-sp3t}]

Let $S$ be the set of slopes, $P$ be the set of points and $R$ be the set
of triangles from  Lemma~\ref{lem-sp3t}. 
Let $S'$ be the set of all the slopes that differ from a 
slope in $S$ by an integer multiple of $\varepsilon$. Note that 
$|S'|\le\frac{\pi}{\varepsilon}|S|$. Let $P'$ be the (finite) set of points 
that can be obtained by rotating a point in $P$ around the origin by an 
integral multiple of~$\varepsilon$. Let $R'$ be the (finite) set of triangles
that is obtained by rotating the triangles in $R$ by an integral multiple of~$\varepsilon$.

We will show that any graph $G\in\Sp(\Delta)$ has a straight-line embedding 
where the slopes of balanced edges belong to $S'$ and the slopes of tilted 
edges also belong to a finite set which is independent of~$G$.

Let $T$ be a labelled tripod used in the construction of the graph $G$. Assume 
that $T$ is inserted into a triangle formed by three vertices $x, y, z$ (see 
Figure~\ref{fig-pointp}). Let $\tau$ be the triangle formed by the three points 
$x,y,z$. Assume that the three vertices are embedded in the plane. Without 
loss of generality, assume that the triangle $\tau$ has acute angles by the 
vertices $y$ and $z$, and the three vertices $xyz$ appear in counterclockwise 
order around the boundary of~$\tau$. Thus the altitude of $\tau$ from the 
vertex $x$ intersects the segment $yz$ on a point $p$ which is in the interior 
of the segment $yz$. Let $\eta$ be the slope of the (directed) segment $yz$.   
\begin{figure}
\hfil\includegraphics{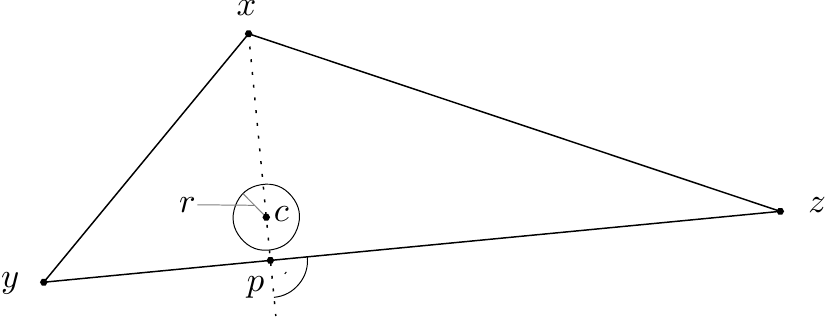}
\caption{Illustration of the proof of Proposition~\ref{pro-sp3t}}\label{fig-pointp}
\end{figure}

We can find a point $c$ in the interior of the triangle $\tau$, and a positive 
real number $r=r(\tau)$, such that for any point $v$ at a distance at most $r$ from 
$c$, the following holds:
\begin{enumerate}
\item $v$ is in the interior of $\tau$
\item the slope of the segment $vx$ differs from the slope of the segment $px$ 
(which is equal to $\eta+\pi/2$) by less than $\varepsilon$
\item 
the slope of the segment $vy$ differs from the slope of the segment $py$ 
(which is  equal to $-\eta$) by less than $\varepsilon$
\item                                   
the slope of the segment $vz$ differs from the slope of the segment $pz$ 
(which is  equal to $\eta$) by less than $\varepsilon$
\end{enumerate}
Indeed, it suffices to choose $c$ sufficiently close to the point $p$ and set 
$r$ sufficiently small, and all the above conditions will be satisfied.

Consider now the embedding $\Em_T$ of $T$. Place the center of the tripod on 
the point $c$, and scale the whole embedding by the factor $r$, so that it 
fits inside the triangle $\tau$. In view of the four conditions above, and in 
view of the seventh part of Lemma~\ref{lem-sp3t}, it is not difficult to 
observe that we may rotate the (scaled) embedding of $T$ around the point $c$ 
by an integral multiple of $\varepsilon$ in such a way that every relevant 
vertex $v\in T$ has visibility towards all its neighbors among the three 
vertices $x,y, z$. Thus, we are able to embed all the necessary tilted edges 
of $G$ between $xyz$ and $T$ as straight line segments. 

Note that in our embedding, all the balanced edges of $T$ have slopes from the 
set $S'$, and all its internal faces are homothetic to the triangles from the set $R'$. Furthermore, any tilted edge has one endpoint in the set $\{x,y,z\}$ 
and another endpoint in the set $c+rP'$ (the set $P'$ scaled $r$-fold and 
translated in such a way that the origin is moved to $c$). Hence
any labelled tripod  $T\in\Tr(\Delta)$ can be inserted inside the 
triangle $xyz$ in such a way that the slopes of the edges always belong to the 
same finite set which depends on the triangle $xyz$ but not on the tripod $T$.
Note that the triangle $xyz$ may be arbitrarily thin, in particular it can have inner
angles smaller than $\varepsilon$.

Let us now show how the above construction yields an embedding of the whole 
graph $G$. For every such 
triangle $\tau\in R'$, fix the point $c=c(\tau)$ and the radius $r=r(\tau)$ from 
the above construction. Any scaled and translated copy of 
$\tau$ will have the values of $c$ and $r$ scaled and translated accordingly. 

We now embed the graph $G$ recursively, by embedding the outer face as an 
arbitrary triangle from $R'$, and then 
recursively embedding each tripod into the appropriate face by the procedure 
described above. Since we only insert tripods into balanced faces, it is easily 
seen that every tripod is being embedded inside a triangle of $R'$.

Overall, the construction uses at most $|S'|=\cO(\Delta^3)$ distinct slopes for the balanced 
edges, and at most $|R'||P'|=\cO(\Delta^5)$ distinct slopes for the tilted edges. The 
total number of slopes is then $\cO(\Delta^5)$, as claimed. 
\end{proof}

In the rest of this section, we prove the Tripod Drawing Lemma. 
Let $T$ be a labelled tripod and let $B$ be a bubble of~$T$.
Recall that the root edge of $B$ is the edge that belongs to a spine of~$T$. Note that the same root edge is shared by two 
bubbles of $T$. Recall also that a bubble is called trivial if it only has two vertices.

We now introduce some terminology that will be convenient for our description 
of the structure of a given bubble. 

\begin{definition}\label{def-dual}
Let $B$ be a nontrivial bubble in a tripod $T$. The unique internal face of 
$B$ adjacent to its root edge will be called \emph{the root face} of~$B$. The 
\emph{dual} of a bubble $B$ is the rooted binary tree $\wB$ whose nodes 
correspond bijectively to the internal faces of $B$, and two nodes are 
adjacent if and only if the corresponding faces of $B$ share an edge. The 
root of the tree $\wB$ is the node that represents the root face of~$B$. 

When dealing with the internal faces of $B$, we will employ the usual 
terminology of rooted trees; for instance, we say that a face $\Phi$ is the 
parent (or child) of a face $\Psi$ if the node representing $\Phi$ in 
$\wB$ is the parent (or child) of the node representing $\Psi$. For 
every internal face $\Phi$ of $B$, the three edges that form the boundary of $\Phi$ 
will be called \emph{the top edge}, \emph{the left edge} and \emph{the right 
edge}, where the top edge is the edge that $\Phi$ shares with its parent face 
(or the root edge, if $\Phi$ is the root face), while left and right edges are 
defined in such a way that the top, left, and right edge form a 
counterclockwise sequence on the boundary of $\Phi$. With this convention, we 
may speak of a left child face or right child face of $\Phi$ without any 
ambiguity. Our terminology is motivated by the usual convention of embedding 
rooted binary trees with their root on the top, and the parent, the left child 
and the right child appearing in counterclockwise order around every node of 
the tree. Furthermore, for a given face $\Phi$, the \emph{bottom vertex} of 
$\Phi$ is the common vertex of the left edge and right edge of $\Phi$.
\end{definition}  
                                                                   
Let us explicitly state the following simple fact which directly follows from 
our definitions.
\begin{observation}\label{obs-left}
Let $\Phi_1, \Phi_2,\dotsc, \Phi_k$ be a sequence of internal faces of a 
bubble $B$, such that for any $j<k$, $\Phi_{j+1}$ is the left child of 
$\Phi_j$. Then all the faces $\Phi_1,\dotsc,\Phi_k$ share a common vertex. In 
particular, if $B$ has maximum degree $\Delta$, then $k<\Delta$. An analogous 
observation holds for right children as well.
\end{observation}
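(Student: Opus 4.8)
The plan is to prove Observation~\ref{obs-left} directly from the combinatorial definition of the dual tree $\wB$ and the parent/child terminology established in Definition~\ref{def-dual}. The key geometric-combinatorial fact to exploit is the precise meaning of ``left child'': if $\Phi_{j+1}$ is the left child of $\Phi_j$, then the top edge of $\Phi_{j+1}$ is, by definition, the left edge of $\Phi_j$. Recall the bottom vertex of $\Phi_j$ is the common vertex of its left and right edges, so the left edge of $\Phi_j$ contains the bottom vertex of $\Phi_j$ as one of its endpoints. The first step is therefore to pin down a single vertex and argue it lies on every face in the chain.

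First I would identify the candidate common vertex. Let $w$ be the endpoint of the top edge of $\Phi_1$ that lies on the left edge of $\Phi_1$ (equivalently, the vertex shared by the top edge and the left edge of $\Phi_1$). I claim $w$ is incident to every face $\Phi_1,\dots,\Phi_k$. The core of the argument is an induction on $j$ showing that $w$ is an endpoint of the top edge of each $\Phi_j$, and that $w$ is the endpoint shared by the top edge and the left edge. The base case $j=1$ is the definition of $w$. For the inductive step, suppose $w$ is the common endpoint of the top edge and left edge of $\Phi_j$. Since $\Phi_{j+1}$ is the left child of $\Phi_j$, the top edge of $\Phi_{j+1}$ is exactly the left edge of $\Phi_j$, which is incident to $w$. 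One then checks, using the counterclockwise orientation convention (top, left, right in counterclockwise order) together with the fact that going from a face to its left child reflects the orientation across the shared edge, that $w$ plays the role of the top/left shared vertex in $\Phi_{j+1}$ as well. This establishes that all the faces share $w$.

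With the common vertex $w$ in hand, the degree bound follows immediately. Each face $\Phi_j$ is a triangle incident to $w$, and consecutive faces $\Phi_j,\Phi_{j+1}$ share an edge incident to $w$ (the left edge of $\Phi_j$). Hence the faces $\Phi_1,\dots,\Phi_k$ form a fan of $k$ distinct triangles around $w$, and this fan uses $k+1$ distinct edges incident to $w$ (the two bounding edges of the fan plus the $k-1$ shared interior edges). Therefore $\deg_B(w)\ge k+1$, and since $B$ has maximum degree $\Delta$ we get $k+1\le\Delta$, i.e.\ $k<\Delta$. The analogous statement for right children is proved by the mirror-image argument, taking $w$ to be the shared endpoint of the top edge and the right edge and observing that the top edge of a right child is the right edge of its parent.

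The main obstacle I anticipate is the careful bookkeeping of the orientation convention in the inductive step: one must verify that after passing to a left child the newly designated ``top, left, right'' labels are still consistent with the counterclockwise convention, so that the shared vertex $w$ really remains the top/left corner of each successive face rather than drifting to another corner. This is a purely combinatorial check about how the labels of a triangle transform when we re-root at a shared edge, and it is the one place where the geometric embedding convention must be translated faithfully into the language of the dual tree. Once that consistency is confirmed, the fan-counting argument for the degree bound is routine.
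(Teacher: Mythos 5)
Your proof is correct, and it matches what the paper intends: the paper states Observation~\ref{obs-left} without any proof, presenting it as a fact that ``directly follows from our definitions,'' and your argument---identifying $w$ as the vertex shared by the top and left edges of $\Phi_1$, checking via the orientation reversal across the shared edge that this top/left corner persists in each left child, and then counting the fan of $k+1$ edges at $w$---is exactly the routine verification the authors omit. The one step you leave implicit is that the $k+1$ edges of the fan are pairwise distinct (so that the fan does not wrap fully around $w$, which would only give $\deg_B(w)\ge k$); this holds because each edge borders at most two internal faces while the $\Phi_j$ are distinct faces forming a path in the dual tree $\wB$, or more simply because $B$ is outerplanar, so $w$ lies on the outer face and the internal faces incident to $w$ form a genuine fan.
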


We now describe an approach that allows us to embed an arbitrary bubble with 
maximum degree $\Delta$ inside a bounded area using a bounded number of 
slopes.                 

\begin{lemma}\label{lem-irelev}
Let $xyz$ be an equilateral triangle with vertex coordinates $x=(0,0)$, 
$y=(1,0)$ and $z=(1/2,-\sqrt 3/2)$. Fix two sequences of slopes 
$\alpha_1$, $\alpha_2$, \dots, $\alpha_{\Delta-1}$ and 
$\beta_1$, $\beta_2$, \dots, $\beta_{\Delta-1}$, with 
$0>\alpha_1>\alpha_2>\dotsb>\alpha_{\Delta-1}>-\pi/3$  and 
$0<\beta_1<\beta_2<\dotsb<\beta_{\Delta-1}<\pi/3$. Let $S$ be the set of 
$2\Delta-1$ slopes 
$\{0\}\cup\{\alpha_1,\alpha_2,\dotsc,\alpha_{\Delta-1}\}\cup\{\beta_1,\beta_2,\dotsc,\beta_{\Delta-1}\}$. 
Let $B$ be a bubble of maximum degree~$\Delta$. Then $B$ has a straight line 
embedding $\Em_B$ inside $xyz$ that only uses the slopes from the set $S$,  
the root edge of $\Em_B$ corresponds to the segment $xy$, 
and moreover the triangular faces of $\Em_B$ form
at most $2\Delta-3$ distinct triangles up to homothetic equivalence.

\end{lemma}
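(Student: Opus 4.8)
The plan is to embed $B$ by a top-down traversal of its dual tree $\wB$ (Definition~\ref{def-dual}), placing one new vertex per internal face. First I place the root edge on the prescribed segment $xy$ (slope $0$). Then, processing each internal face $\Phi$ after its top edge has already been drawn, I place the bottom vertex of $\Phi$ as the intersection of the ray leaving the left endpoint of the top edge with a prescribed negative slope and the ray leaving the right endpoint with a prescribed positive slope; this fixes the left edge, the right edge, and the triangle $\Phi$, after which I recurse into the left child (on the left edge) and the right child (on the right edge). Since the slope of every non-root edge is decided once, by the face of which it is the left or right edge, and is merely inherited as the top edge of its child, the slope assignment is automatically consistent. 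It then remains to specify the rule and to verify that the resulting straight-line drawing is non-crossing and stays inside $xyz$.

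The slope rule exploits the chain structure of $\wB$. I assign the left and right edges of the root face the slopes $\alpha_1$ and $\beta_1$. For a face $\Phi$ that is a left child, whose top edge therefore already carries some slope $\alpha_i$, I give its left edge the next slope $\alpha_{i+1}$ and reset its right edge to the shallowest positive slope $\beta_1$; symmetrically, for a right child with top slope $\beta_j$ I give the right edge $\beta_{j+1}$ and reset the left edge to $\alpha_1$. Along a maximal left-chain the left edges all emanate from the common vertex guaranteed by Observation~\ref{obs-left}, so they receive the consecutive slopes $\alpha_1,\alpha_2,\dots$; because that vertex has degree at most $\Delta$ the chain has fewer than $\Delta$ faces, so the indices never exceed $\Delta-1$ and the rays never leave~$S$. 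The same holds for right-chains. Reading off the three slopes of each face, the realized triangles fall into exactly three families: the single root triangle $(0,\alpha_1,\beta_1)$; the left-child triangles $(\alpha_i,\alpha_{i+1},\beta_1)$ for $1\le i\le\Delta-2$; and the right-child triangles $(\beta_j,\alpha_1,\beta_{j+1})$ for $1\le j\le\Delta-2$. Up to homothety a triangle is determined by its three slopes, so this yields at most $1+(\Delta-2)+(\Delta-2)=2\Delta-3$ distinct shapes, as required.

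For validity I would first check that every face is drawn correctly, i.e.\ that its bottom vertex lies strictly below the top edge: in each of the three families the left-edge slope is strictly smaller than the top-edge slope, which is strictly smaller than the right-edge slope, and this ordering is precisely the condition for the two rays to meet below the segment in a nondegenerate triangle. Non-crossing and containment in $xyz$ I would obtain together from one geometric invariant. For an edge $e$ with endpoints $A$ (left) and $B$ (right) and slope in $(-\pi/3,\pi/3)$, call the \emph{cone region} of $e$ the triangle bounded by $e$, the ray of slope $-\pi/3$ from $A$, and the ray of slope $+\pi/3$ from $B$; the cone region of the root edge is exactly $xyz$. The key geometric lemma is that if $e$ lies inside the cone region of another edge $e'$, then the cone region of $e$ is contained in that of $e'$, and moreover the cone regions of the two edges sharing the bottom vertex $w$ of a face are interior-disjoint, being separated at $w$ by the downward sector between the $-\pi/3$ and $+\pi/3$ directions. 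Since all edge-slopes lie strictly in $(-\pi/3,\pi/3)$, the bottom vertex of each face lies inside the cone region of its top edge; feeding this into the lemma and inducting down $\wB$ shows that the whole sub-bubble below any edge stays in that edge's cone region, that sibling sub-bubbles do not overlap, and hence that the entire drawing is a non-crossing embedding contained in $xyz$.

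I expect the main obstacle to be exactly this last geometric step: formulating the cone-region invariant so that it is preserved under the \emph{forced} placement of bottom vertices for \emph{arbitrary} admissible slope sequences, and checking the nesting and interior-disjointness of cone regions. The slope rule and the shape count are essentially bookkeeping once Observation~\ref{obs-left} bounds the chain lengths, but because fixing the slopes leaves no freedom in the vertex positions, containment inside $xyz$ cannot be arranged by rescaling after the fact and must be proved directly from the fact that every slope is confined strictly between $-\pi/3$ and $+\pi/3$.
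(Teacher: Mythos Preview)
Your proposal is correct and follows essentially the same approach as the paper: place the bottom vertex of each face using the left/right-chain indexing controlled by Observation~\ref{obs-left}, yielding exactly the slope set~$S$ and the bound $2\Delta-3$ on homothety classes of faces. The only difference is cosmetic---the paper resets the ``opposite'' edge of each chain face to slope~$0$ (so that sub-bubbles recurse on horizontal root edges inside equilateral subtriangles) rather than to $\beta_1$ or $\alpha_1$ as you do, and argues containment by that induction instead of via your cone-region invariant; your invariant is actually more explicit than the paper's rather terse treatment of containment and non-crossing.
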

\begin{proof}
Proceed by induction on the size of $B$. If $B$ is trivial, the statement 
holds. Assume now that $B$ is a nontrivial bubble. Let $\Phi_0$ be the root 
edge of~$B$. See Figure~\ref{fig-irelev}. 

\begin{figure}
\hfil\includegraphics[scale=0.8]{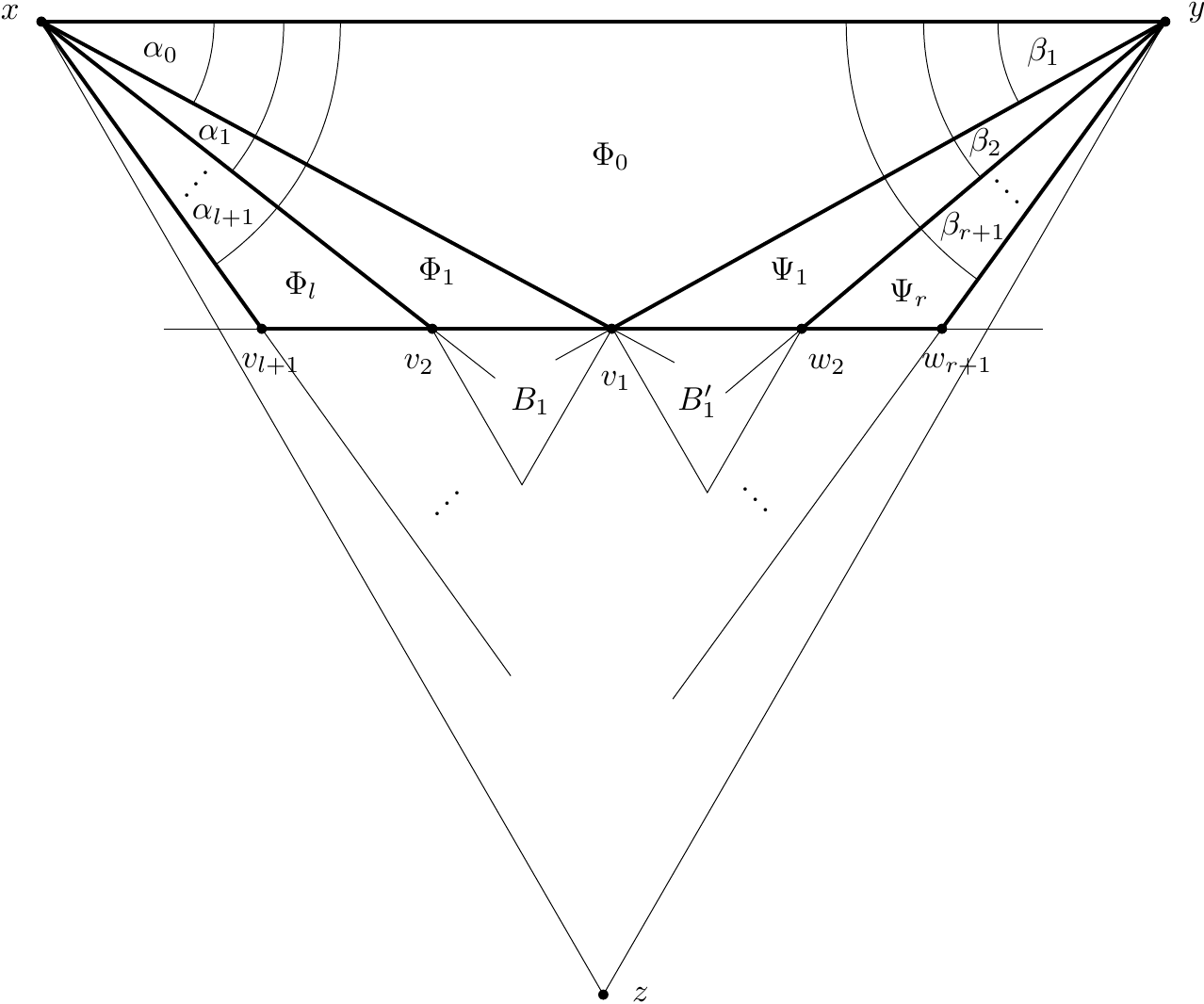}
\caption{Illustration of the proof of Lemma~\ref{lem-irelev}.}\label{fig-irelev}
\end{figure}

Define the maximal sequence of faces 
$\Phi_1,\Phi_2,\dotsc,\Phi_\ell$ in such a way that $\Phi_{i+1}$ is the left 
child of $\Phi_i$, with $\Phi_1$ being the left child of the root edge 
$\Phi_0$. The maximality of the sequence means that $\Phi_\ell$ has no left 
child. Symmetrically, define a maximal sequence of faces $\Psi_1,\dotsc, 
\Psi_r$ such that $\Psi_1$ is the right child of $\Phi_0$, and $\Psi_{i+1}$ 
is the right child of $\Psi_i$. By Observation~\ref{obs-left}, we know that 
$\ell<\Delta-1$ and $r<\Delta-1$.

Let $(p,\alpha)$ denote the ray starting at a point $p$ and heading 
in direction~$\alpha$. 

Let $B$ be an arbitrary bubble. Let $v_1$ be the intersection of the rays 
$(x,\alpha_1)$ and $(y,\beta_1)$. The root face $\Phi_0$ will be embedded as 
the triangle $xyv_1$. Define points $v_2,\dotsc,v_{\ell+1}$ by specifying 
$v_i$ as the intersection of $(x,\alpha_i)$ and $(v_1,\pi)$. The face $\Phi_i$ 
is then embedded as the triangle $xv_iv_{i+1}$. Similarly, define points 
$w_2,\dotsc,w_{r+1}$ where $w_i$ is the intersection of $(y,\beta_i)$ with 
$(v_1,0)$. Then $\Psi_1$ is embedded as the triangle $yv_1w_2$, while for 
$k>1$ we embed $\Psi_k$ as the triangle $yw_kw_{k+1}$. 

Note that when we remove the two vertices incident to the root edge from the 
bubble $B$, the remaining edges and vertices form a union of $\ell+r$ bubbles 
$B_1\cup\dotsb\cup B_\ell\cup B'_1\cup\dotsc\cup B'_\ell$, where $B_i$ is a 
bubble whose root edge is the right edge of $\Phi_i$ while $B'_j$ is rooted at 
the left edge of~$\Psi_j$. Using induction, we know that each $B_i$ has a 
straight line embedding inside the equilateral triangle whose top edge is the 
horizontal segment $v_iv_{i+1}$ (and symmetrically for $B'_j$).

This completes the proof.
\end{proof}  

\begin{corollary}\label{cor-irelev}
Let $xyz$ be an arbitrary triangle and $B$ a bubble of maximum degree $\Delta$. 
There are sets $S$ of $2\Delta-1$ slopes and $R$ of $2\Delta-3$ triangles that depend on $xyz$ but not on $B$, 
such that $B$ can be embedded inside $xyz$ using only slopes from $S$
and triangles from $R$ for triangular faces, in such 
a way that the root edge of $B$ coincides with the segment $xy$.
\end{corollary}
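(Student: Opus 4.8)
The plan is to reduce the arbitrary triangle to the equilateral one of Lemma~\ref{lem-irelev} by an invertible affine transformation. Let $x_0y_0z_0$ be the equilateral reference triangle with the coordinates fixed in Lemma~\ref{lem-irelev}, and let $xyz$ be the given non-degenerate triangle. Since any three non-collinear points can be sent to any three non-collinear points by a unique affine map, there is an invertible affine map $\varphi(p)=Ap+b$ with $\varphi(x_0)=x$, $\varphi(y_0)=y$ and $\varphi(z_0)=z$; in particular $\varphi$ carries the reference triangle onto $xyz$ and the reference root edge $x_0y_0$ onto the segment $xy$.

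First I would invoke Lemma~\ref{lem-irelev} to obtain a straight-line embedding $\Em_B$ of $B$ inside $x_0y_0z_0$ that uses only the $2\Delta-1$ slopes $S_0$ of that lemma, whose root edge coincides with $x_0y_0$, and whose internal faces fall into at most $2\Delta-3$ homothety classes of triangles. I then push this embedding forward by $\varphi$. Because $\varphi$ is an injective affine map, it sends segments to segments and preserves incidences and non-crossings, so $\varphi(\Em_B)$ is again a valid straight-line embedding of $B$; it lies inside $xyz$ and its root edge is $xy$, as required.

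It remains to control the two combinatorial quantities, and this is where the only real content lies. For the slopes: the linear part $A$ is invertible, so it preserves parallelism and maps any two non-parallel lines to non-parallel lines; hence the induced action of $A$ on the set of all slopes is a bijection, and the $2\Delta-1$ distinct slopes of $\Em_B$ are carried to exactly $2\Delta-1$ distinct slopes. I let $S$ be the resulting set of slopes, which depends on $xyz$ but not on $B$. For the triangle shapes I would use the fact that the affine conjugate of a homothety is again a homothety: if $h(p)=\lambda p+(1-\lambda)c$ is the homothety of ratio $\lambda$ centered at $c$, then a short computation gives $\varphi\circ h=h'\circ\varphi$, where $h'$ is the homothety of the same ratio $\lambda$ centered at $\varphi(c)$. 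Consequently two triangles are homothetic if and only if their $\varphi$-images are homothetic, so $\varphi$ induces a bijection between homothety classes; the $2\Delta-3$ classes of faces of $\Em_B$ map to exactly $2\Delta-3$ classes, and I take $R$ to be a set of representatives of these image classes.

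The main obstacle, modest as it is, is precisely this preservation of homothety classes: an arbitrary affine map need not preserve similarity or angles, so one must observe that homothety (uniform scaling about a center) is exactly the notion stable under affine conjugation, as verified by the identity $\varphi\circ h=h'\circ\varphi$ above. Once this is in hand, the bounds $|S|=2\Delta-1$ and $|R|=2\Delta-3$ follow immediately and the corollary is proved.
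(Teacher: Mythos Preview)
Your proof is correct and follows the same approach as the paper: apply Lemma~\ref{lem-irelev} in the equilateral reference triangle and push the embedding forward by the affine map sending it to $xyz$. In fact your argument is more complete than the paper's, which only explicitly mentions that affine maps preserve the number of distinct slopes; you also supply the verification that affine maps preserve homothety classes of triangles, which is needed for the bound on $|R|$.
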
                                                 
\begin{proof}
This follows from Lemma~\ref{lem-irelev}, using the fact that for any triangle 
there is an affine transform that maps it to an equilateral triangle, and that 
affine transforms preserve the number of distinct slopes used in a straight-line 
embedding.
\end{proof}
           
The construction from Lemma~\ref{lem-irelev} can be applied to embed all the 
irrelevant bubbles of a given labelled tripod $T$. Unfortunately, the 
construction of Lemma~\ref{lem-irelev} is not suitable for the embedding of 
relevant bubbles, because it provides no control about the position of the 
relevant vertices. Indeed, inside the triangle $xyz$ of the previous lemma, 
there are infinitely many points where a vertex may be embedded by the 
construction described in the proof of the lemma. Thus, we can give no upper 
bound on the number of potential embeddings of relevant vertices.

For this reason, we now describe a more complicated embedding procedure, which 
allows us to control the position of the relevant vertices. We first need some 
auxiliary definitions. 

\begin{definition}
An \emph{adder} $A$ is a bubble with a root edge $h$ and another edge $t\neq 
h$, such that the dual tree of $A$ is a path, and the edge $t$ is an external 
edge adjacent to the single leaf face of $A$. See Figure~\ref{fig-adder}. The 
edges $h$ and $t$ are called \emph{head} and \emph{tail} of the adder. It is 
easy to see that every adder contains a unique path $Z$ whose first edge is 
$h$, its last edge is $t$ and no other edge of $Z$ belongs to the outer face 
of $A$. The path $Z$ will be called the \emph{zigzag path} of the adder $A$. 
The \emph{length} of the adder is defined to be the number of edges of its 
zigzag path. By definition, each adder has length at least 2. An adder of 
length 2 will be called \emph{degenerate}.  
\end{definition}                                                         

\begin{figure}
\hfil\includegraphics[scale=1]{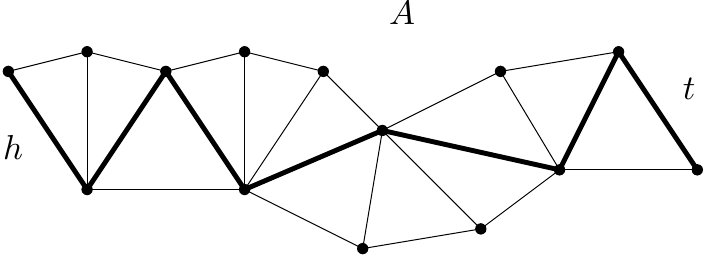}
\caption{An adder. The bold edges form the zigzag path.}\label{fig-adder}
\end{figure}

We will now show that adders of bounded degree can be embedded inside a 
prescribed quadrilateral using a bounded number of slopes and triangles.

\begin{lemma}\label{lem-adder}
For every convex quadrilateral $Q=abcd$ and for every $\Delta$ there is a set 
$S$ of ${\cal O}(\Delta)$ slopes, a set $S_0\subseteq S$ of ${\cal O}(1)$ 
slopes, and a set $R$ of ${\cal O}(\Delta)$ triangles such that any 
nondegenerate adder $A$ of maximum degree $\Delta$ has a straight line 
embedding $\Em_A$ with the following properties:
\begin{enumerate}
\item All the edge-slopes of $\Em_A$ belong to the set~$S$.
\item All the edges on the outer face of $\Em_A$ have slopes from the set~$S_0$.
\item Each internal face of $\Em_A$ is homothetic to a triangle from~$R$. 
\item The head of $A$ coincides the edge $ab$ of $Q$ and the tail of $A$ 
coincides with~$cd$.
\item The embedding $\Em_A$ is contained in the convex hull of $abcd$.
\end{enumerate}
\end{lemma}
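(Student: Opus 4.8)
The plan is to realize the adder as a \emph{straightened} triangle strip drawn inside a thin channel of $Q$, so that it spans from $ab$ to $cd$ while reusing a fixed, $\Delta$-bounded repertoire of slopes and triangle shapes no matter how long the strip is.

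First I would analyze the combinatorial shape of the adder. Since an adder has its dual tree equal to a path, it is a strip of triangular faces $\Phi_0,\Phi_1,\dots,\Phi_m$ in which consecutive faces share an edge, $\Phi_0$ carries the head $h$ and $\Phi_m$ carries the tail $t$; nondegeneracy forces $m\ge 1$. Reading the strip as a sequence of \emph{left child} / \emph{right child} steps in the sense of Definition~\ref{def-dual}, I group it into maximal runs of equal handedness. By Observation~\ref{obs-left} each such run is a \emph{fan}: all its faces share one common vertex (an \emph{apex}), and each fan contains at most $\Delta-1$ faces. Consecutive fans have opposite handedness, which is exactly what will let the strip stay straight. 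Using Corollary~\ref{cor-irelev} I may apply an affine map and assume $Q$ is in any convenient normalized position, remembering that affine maps preserve both the number of distinct slopes and the partition of triangles into homothety classes (uniform scaling plus translation commutes with every affine map, so $B=\lambda A+v$ implies $f(B)=\lambda f(A)+w$).

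Next I would fix the drawing template. Choose two parallel \emph{rails} bounding a thin channel, and fix a set $D$ of $\cO(\Delta)$ directions from one rail to the opposite rail, clustered in a narrow, nearly vertical angular sector so that the directions are pairwise distinct. The embedding places apex vertices alternately on the two rails and draws every fan of size $k\le\Delta-1$ as $k$ thin triangles whose non-rail edges use $k+1$ directions from $D$ and whose bases lie along a rail; the edge shared by two consecutive fans is simply the extreme ray they have in common, so the strip is continuous as it switches rails. Because all fans reuse the same $D$ and the same uniform rail spacing, every edge slope lies in the fixed set $S$ consisting of the single rail slope, the $\cO(\Delta)$ directions in $D$, and a handful of extra slopes used by the two end faces $\Phi_0,\Phi_m$; the outer boundary of the strip consists of rail-aligned edges together with $h$ and $t$, so it uses only $\cO(1)$ slopes, which furnishes $S_0$. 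Every internal face is congruent, up to scaling, to one of the $\cO(\Delta)$ triangles determined by a pair of directions in $D$ and the rail, so $R$ has size $\cO(\Delta)$; crucially, lengthening the adder only rescales the whole picture uniformly, which leaves the homothety classes in $R$ unchanged. It remains to seat this straight strip inside $Q$: by convexity the channel can be routed along a segment joining the interior of $ab$ to the interior of $cd$, and $\Phi_0,\Phi_m$ are drawn as the two (possibly fat) triangles that open the channel onto the full edges $ab$ and $cd$, absorbing the fixed total turning from the slope of $ab$ to that of $cd$ so that the middle stays nearly straight and the whole drawing remains in the convex hull of $abcd$. This yields $\Em_A$ with properties (1)--(5), and the construction can be made rigorous by induction on $m$: peel off $\Phi_0$, embed the shorter residual adder (head $e_1$, tail $t$) in the residual channel, and reattach $\Phi_0$.

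The main obstacle I anticipate is the geometric bookkeeping that keeps every slope inside the fixed finite set while the combinatorial fan pattern varies. I must place the alternating fans so that (i) each fan of up to $\Delta-1$ triangles is realized by distinct directions drawn from the same clustered set $D$ with no degeneracy, (ii) the edge shared by two consecutive fans receives a consistent slope from $D$ as the strip switches rails, and (iii) the small turnings of successive opposite-handed fans cancel enough for the strip to stay inside the thin channel, with the prescribed net turning from $ab$ to $cd$ deposited only in the two end faces. Checking that all of these constraints can be met at once --- in particular that the required directions always exist in $D$ and that uniform rail spacing confines all face shapes to a single $\cO(\Delta)$-size family $R$ independent of the adder's length --- is the technical heart of the argument.
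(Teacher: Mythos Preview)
Your approach differs from the paper's and, as written, has a genuine gap at exactly the step you flag as the main obstacle.

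The difficulty is this: you draw the bulk of the adder in a channel between two \emph{parallel} rails with uniform spacing, and then uniformly rescale that picture so it fits inside $Q$. But the head and tail are pinned to the \emph{full} segments $ab$ and $cd$, which do not scale. The end faces $\Phi_0$ and $\Phi_m$ must therefore bridge between the fixed edges $ab$, $cd$ and a channel whose width (equivalently, whose entry vertex) depends on the length $m$ of the adder. Hence the two non-head edges of $\Phi_0$ and the two non-tail edges of $\Phi_m$ have slopes that vary with $m$, and property~(1) fails; likewise $\Phi_0,\Phi_m$ fall outside any fixed set $R$, breaking~(3). Put differently, a parallel-rail strip with uniform spacing has no built-in self-similarity, so there is no way to accommodate adders of all lengths between a fixed head and a fixed tail using a single finite slope set. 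Your own clause ``absorbing the fixed total turning \dots\ so that the middle stays nearly straight'' papers over the problem: the turning is fixed, but the \emph{position} at which the channel meets $\Phi_0$ is not.

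The paper sidesteps this by taking the ``rails'' to be the two \emph{diagonals} $ac$ and $bd$ of $Q$ (after reducing to the trapezoid case). Zigzag vertices are laid on the diagonals, alternating a horizontal step with a step of one fixed auxiliary slope $\gamma$; this produces a geometric progression of triangles, all homothetic with centre the intersection point $e$ of the diagonals. Because the scheme is self-similar with a fixed ratio, zigzags of arbitrary length fit between $ab$ and $cd$ using the same $\cO(1)$ slopes and the same $\cO(1)$ triangle shapes. The $\cO(\Delta)$ budget is then spent solely on the non-zigzag ``ear'' vertices of each fan, placed via $\Delta-2$ further fixed slopes $\gamma_1<\dots<\gamma_{\Delta-2}$. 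The general quadrilateral and even/odd parity are handled by splitting $A$ into an odd-length sub-adder in a trapezoidal piece of $Q$ plus a sub-adder of length at most four. If you want to rescue your plan, the fix is precisely to replace the parallel rails by two fixed \emph{converging} lines through $a,c$ and $b,d$; once you do that you have essentially reproduced the paper's construction.
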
                                 
\begin{proof}  
Note that the lemma is clearly true when restricted to adders of length at 
most four (or any other bounded length). In the rest of the proof, we assume 
that $A$ is an adder of length at least five.

We first deal with the case when the edges $ab$ and $cd$ are parallel (i.e., 
$Q$ is a trapezoid), and the adder $A$ has odd length $\ell=2k+1$. Without 
loss of generality, assume that the segments $ab$ and $cd$ are horizontal and 
that the line containing $cd$ is above the line containing $ab$. Let $\alpha$ 
be the slope of the diagonal $ac$ and $\beta$ the slope of the diagonal $bd$, 
with $0<\alpha<\beta<\pi$. Let $e$ be the point where the two diagonals 
intersect. Notice that the two triangles $abe$ and $cde$ are homothetic. Let 
$r=\|ab\|/\|cd\|=\|ae\|/\|ce\|$ be the dilation factor of the homothecy.

Let $Z$ be the zigzag path of $A$. Let us identify the head of $A$ with the 
segment $ab$ and the tail of $A$ with $cd$, in such a way that the cyclic 
order of the four points $abcd$ on the boundary of $Q$ is the same as the 
cyclic order in which the corresponding vertices appear on the outer face of 
$A$. 
           
\begin{figure}
\hfil\includegraphics[scale=0.75]{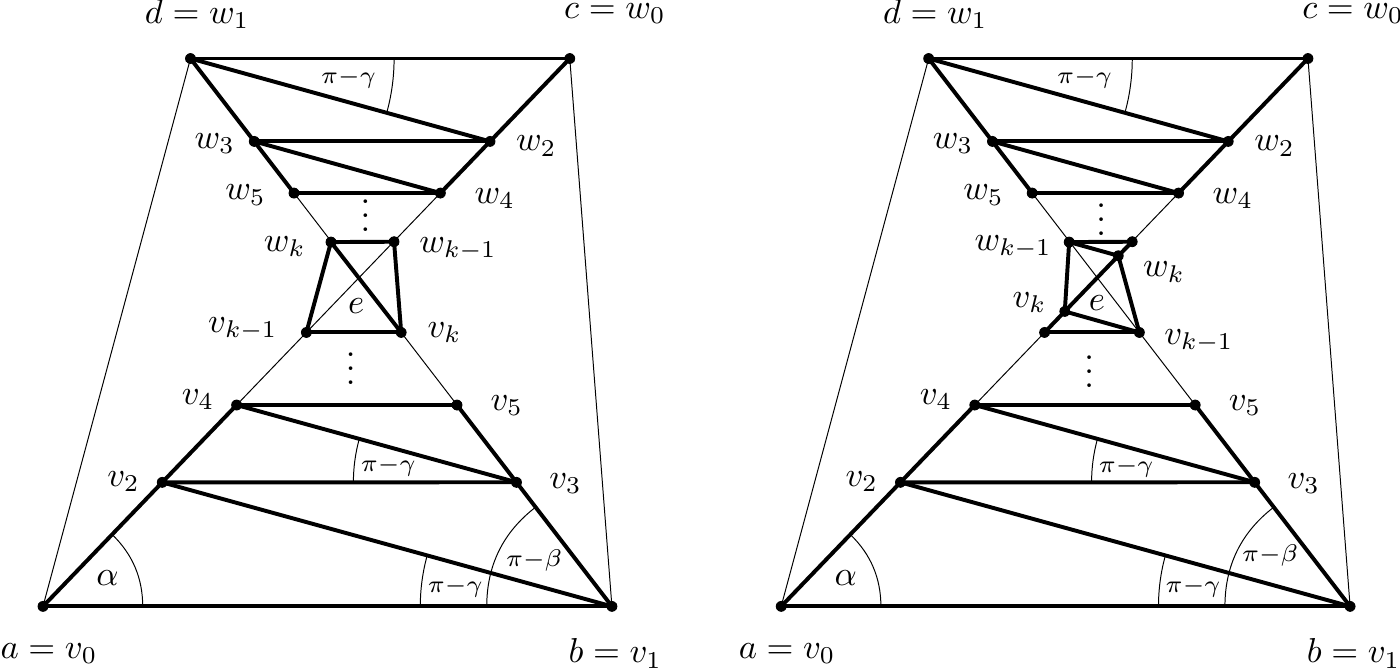} \caption{Embedding an adder 
with prescribed head and tail. These figures illustrates the embedding of the 
adder of odd length $2k+1$. The two figures correspond to the two cases 
depending on the parity of $k$.}\label{fig-embadder}
\end{figure}

Since $A$ has odd length, the endpoints of its zigzag path are diagonally 
opposite in $Q$, see Figure~\ref{fig-embadder}. We 
lose no generality by assuming that $a$ and $c$ are the endpoints of the 
zigzag path. Let $v_0,v_1,v_2,\dotsc,v_k,w_k,w_{k-1},w_{k_2},\dotsc,w_1,w_0$ 
be the sequence of the vertices of $Z$, in the order in which they appear on 
the path $Z$, with $v_0=a$, $v_1=b$, $w_0=c$, and $w_1=d$. Fix an arbitrary 
slope $\gamma$ such that $\beta<\gamma<\pi$. All the vertices of $Z$ will be 
embedded on the two diagonals $ac$ and $bd$. Since the first two and last two 
vertices have already been embedded, let us proceed by induction, separately 
in each half of $Z$. If, for some $i\ge0$, the vertex $v_i$ has already been 
embedded on the diagonal $ac$, then we embed $v_{i+1}$ on $bd$ in such a way 
that the segment $v_iv_{i+1}$ is horizontal. If $v_i$ has been embedded on 
the diagonal $bd$, then $v_{i+1}$ is embedded on $ac$ and the slope of 
$v_iv_{i+1}$ is equal to~$\gamma$.

We proceed similarly with the vertices $w_i$: if $w_i$ is on $ac$ then 
$w_{i+1}$ is on $bd$ and the segment $w_iw_{i+1}$ has slope $\gamma$; 
otherwise $w_i$ is on $bd$ and $w_{i+1}$ is on $ac$ and the corresponding 
segment is horizontal.

We may easily show by induction that for any $i$, the triangles $ev_iv_{i+1}$ 
and $ew_iw_{i+1}$ are similar, all of them with the same ratio 
$r=\|ev_i\|/\|ew_i\|$. Furthermore, we see that $ev_iv_{i+1}$ is similar to 
$ev_{i+2}v_{i+3}$, with a ratio $q$ that is independent of~$i$. From these 
facts, we see that all the segments of the form $v_iw_{i+1}$ have at most two 
distinct slopes (depending on the parity of $i$), and similarly for the 
segments of the form $w_iv_{i+1}$.

Let us consider all the triangles formed by triples of vertices $xyz$ where 
$x,y$ and $z$ are three consecutive vertices of the path $Z$. Note that these 
triangles are internally disjoint, and their edges form at most six distinct 
slopes, namely $0,\alpha,\beta,\gamma$, the slope of the segment $v_kw_{k-1}$ 
and the slope of the segment $v_{k-1}w_k$. Furthermore, the latter two slopes 
belong to a set of at most four slopes that are independent of $k$, and hence 
independent of the adder $A$. The union of the above-described triangles will 
form the outer boundary of our embedding of $A$. It remains to place the 
vertices of $A$ that do not belong to $Z$ to this boundary.

Let us fix $\Delta-2$ additional slopes 
$\gamma_1<\gamma_2<\dotsb<\gamma_{\Delta-2}$ which are all greater than 
$\gamma$ but smaller than $\pi$. Note than any vertex $u$ of $A$ that does not 
belong to $Z$ is incident to exactly one edge that does not belong to the 
outer face of $A$, and this edge connects $u$ to a vertex of $Z$. Thus, to 
complete the description of the embedding of $A$, it suffices to specify, for 
every vertex $v$ of $Z$, the slopes of all the edges that do not belong to the 
outer face of $A$ and that connect $v$ to a vertex not belonging to $Z$. Thus, 
let us fix an arbitrary vertex $v$ of $Z$. Let us assume that $v$ has been 
embedded on the diagonal $ac$ and that $v=v_i$ for some $i\le k$ (the cases 
when $v$ belongs to $bd$ or $v=w_i$ are analogous). Let $u_1,\dotsc,u_\ell$ be 
the vertices not belonging to $Z$ and adjacent to $v$ by an internal edge of 
$A$. Note that if $v$ has at least one such neighbor $u_i$, then $v\neq v_1$, 
because $v_1$ is not incident to any edge not belonging to the outer face. Let 
$v^+$ be the vertex that follows after $v$ on $Z$ (typically, $v^+=v_{i+1}$, 
unless $v=v_k$, when $v^+=w_k$). Assume that the vertices $u_1,\dotsc u_\ell$ 
are listed in their counterclockwise order with respect to the neighborhood 
of~$v$. Let us place each $u_i$ at the intersection of the line $v_{i-1}v^+$ 
and the ray $(v,\pi+\gamma_i)$. This choice guarantees that the edge $vu_i$ 
has slope $\gamma_i$.

We have thus found a straight line embedding of $A$ that has all the required 
properties and uses at most $\Delta +{\cal O}(1)$ slopes. This completes the 
case when $A$ is an odd-length adder and $Q$ is a trapezoid.

Assume now that $A$ is an arbitrary nondegenerate adder of length $\ell\ge 
5$, and $Q$ is an arbitrary convex quadrilateral. Our goal is to reduce this 
situation to the cases solved above. Note that the adder $A$ can be written as 
a union of two non-degenerate sub-adders $A_1$ and $A_2$, where $A_1$ has odd 
length, $A_2$ has length three or four, $A_1$ has the same head as $A$, $A_2$ 
has the same tail as $A$, the tail of $A_1$ is the head of $A_2$, and the 
adders $A_1$ and $A_2$ are otherwise disjoint. Accordingly, the convex 
quadrilateral $Q=abcd$ can be decomposed into a union of two internally 
disjoint quadrilaterals $Q_1=abc'd'$ and $Q_2=d'c'cd$, where $Q_1$ is a 
trapezoid. We may now use our previous arguments to construct an embedding of 
$A_1$ inside $Q_1$, and an embedding of $A_2$ inside $Q_2$, and combine the
two embeddings into an embedding of $Q$ satisfying the conditions of the 
lemma.
\end{proof}                                        

We will use adders as basic building blocks in a procedure that embeds any 
given bubble with prescribed relevant vertices in such a way that the 
embedding of all the relevant vertices is chosen from a finite set of 
points. The following technical lemma summarizes all the key properties of the 
bubble embedding that we are about to construct.

\begin{lemma}\label{lem-bubble}   
Let $T=abc$ be an isosceles triangle with base $ab$, and with internal angles 
$\varepsilon/2, \varepsilon/2$ and $\pi-\varepsilon$. Assume that the line $ab$ 
is horizontal and the point $c$ is below the line~$ab$. For every $\Delta>0$ 
there is a set $S$ of $\cO(\Delta^{3})$ slopes, a set $P$ of $\cO(\Delta)$ 
points, and a set $R$ of $\cO(\Delta^{3})$ triangles, such that every labelled 
bubble $B\in\B(\Delta)$ has an embedding $\Em_B$ with the following properties.
\begin{enumerate}
\item All the edge-slopes of $\Em_B$ belong to $S$.
\item Any relevant vertex of $B$ is embedded at a point from $P$.
\item Every internal face of $\Em_B$ is homothetic to a triangle from $R$.
\item
The root edge of $B$ coincides with the segment $ab$.     
\item                                                      
The whole embedding $\Em_B$ is inside the triangle~$T$.
\item 
Any relevant vertex of $\Em_B$ has visibility in any direction from the set
$\langle\pi+\varepsilon, 2\pi-\varepsilon\rangle$.
\end{enumerate}
\end{lemma}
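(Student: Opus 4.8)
The plan is to prove Lemma~\ref{lem-bubble} by induction on the structure of the labelled bubble $B$, decomposing $B$ along its dual tree $\wB$ into a bounded ``backbone'' of relevant faces together with irrelevant subtrees hanging off it, and then embedding the backbone using the adders of Lemma~\ref{lem-adder} and the irrelevant parts using Corollary~\ref{cor-irelev}. The reason adders are the right tool is that the zigzag structure of an adder exactly matches the situation where the relevant vertices lie along a single path in $\wB$: between any two consecutive relevant faces the dual tree follows a sequence of left-children or of right-children, which by Observation~\ref{obs-left} has length less than $\Delta$, and such a sequence is precisely a (possibly degenerate) adder.

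First I would identify the \emph{relevant backbone}: the minimal subtree $\wf$ of $\wB$ that connects the root to all faces incident to a relevant vertex. Since $B$ has at most $3\Delta$ relevant vertices, and each branching of $\wf$ corresponds to a face whose two children both lead to relevant vertices, $\wf$ has $\cO(\Delta)$ branch nodes and leaves, hence $\cO(\Delta)$ maximal non-branching segments. Each such segment is a monotone sequence of left- or right-children, so by Observation~\ref{obs-left} it has length less than $\Delta$ and corresponds to an adder of maximum degree $\Delta$. The key geometric step is then to fix a \emph{finite} family of convex quadrilaterals inside the thin triangle $T$, nested to follow the tree structure, and to embed each backbone adder into the appropriate quadrilateral via Lemma~\ref{lem-adder}. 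Because the number of possible tree shapes of $\wf$ (up to the relevant-face combinatorics) is bounded by a function of $\Delta$, and each adder contributes $\cO(\Delta)$ slopes and $\cO(\Delta)$ triangles, choosing the quadrilaterals from a fixed finite set forces the relevant vertices (which are endpoints of backbone adders, i.e.\ corners of these quadrilaterals) into a set $P$ of $\cO(\Delta)$ points, with slopes in a set $S$ of size $\cO(\Delta^3)$ and triangles in a set $R$ of size $\cO(\Delta^3)$, as required by the first three conditions.

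Next I would handle the irrelevant bubbles hanging off the backbone: each edge of a backbone face that is not itself on the backbone is the root of an irrelevant sub-bubble, which I embed by Corollary~\ref{cor-irelev} inside the triangle already allocated to that edge, contributing only $\cO(\Delta)$ further slopes and triangles that depend on the (finitely many) allocated triangle shapes but not on $B$. Conditions~4 and~5 (the root edge coincides with $ab$ and the whole drawing stays inside $T$) follow by arranging that the outermost allocated quadrilateral has $ab$ as one side and that all quadrilaterals and triangles are nested inside $T$; here the extreme thinness of $T$ (apex angle $\pi-\varepsilon$, so base angles $\varepsilon/2$) is what lets me keep every backbone adder's outer slopes within an $\cO(\varepsilon)$ cone of the horizontal, which is exactly what condition~6 demands.

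I expect the main obstacle to be condition~6, the visibility requirement: every relevant vertex must see in \emph{all} directions of $\langle\pi+\varepsilon,2\pi-\varepsilon\rangle$, i.e.\ the entire lower half-plane minus two $\varepsilon$-wedges near horizontal must be unobstructed by the rest of the embedding. This is delicate because a relevant vertex sits on the shared boundary of its adder and whatever is embedded ``below'' it, so I must route all of $B$ so that no vertex or edge of the embedding ever rises into the downward visibility cone of any relevant vertex. The way to secure this is to exploit the thinness of $T$ together with the slope set $S_0$ of Lemma~\ref{lem-adder}: by forcing every outer-boundary slope of every backbone adder into a narrow band around horizontal (slopes within $\varepsilon$ of $0$), the region occupied by the drawing stays squeezed against the top edge $ab$ and its two steep sides, leaving the full lower cone clear. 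Verifying this visibility cleanly, rather than by ad hoc case analysis, is where the real work lies, and it is the reason the lemma is stated for the specific thin triangle $T$ rather than an arbitrary one.
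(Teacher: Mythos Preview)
Your high-level decomposition into a central subtree $\wB'$, connections realized as adders via Lemma~\ref{lem-adder}, and peripheral sub-bubbles handled by Corollary~\ref{cor-irelev} matches the paper's three-step scheme. However, there is a genuine error: you claim that each maximal non-branching segment of the backbone ``is a monotone sequence of left- or right-children, so by Observation~\ref{obs-left} it has length less than~$\Delta$.'' This is false. A connection is a path in $\wB'$ along which the backbone does not branch, but at each node the unique child in $\wB'$ may be either the left or the right child in~$\wB$; the path can zigzag and be \emph{arbitrarily long}. This is exactly why Lemma~\ref{lem-adder} exists: if connections had length $<\Delta$, the whole backbone would have $\cO(\Delta^2)$ faces and the lemma would be nearly trivial.

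Because connections are unbounded, you cannot pin down the relevant vertices by appealing to a bounded number of ``tree shapes of~$\wf$.'' The paper's construction sidesteps this entirely: it fixes once and for all a set $P$ of $D=18\Delta$ points $p_1,\dotsc,p_D$ on a circular arc inside~$T$, and places the \emph{priority vertices} (those lying on a relevant face, a branching face, or the root edge---at most $D$ of them) onto $P$ in the cyclic order in which they appear on the outer face of~$B'$. The head and tail of every connection adder then automatically land on segments of the form $p_m p_n$ and $p_{m+1}p_{n-1}$, giving only $\cO(\Delta^2)$ possible quadrilaterals, each contributing $\cO(\Delta)$ slopes and triangle types by Lemma~\ref{lem-adder}; degenerate adders are handled with auxiliary points subdividing the segments $p_ip_{i+1}$. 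This convex-arc placement also makes condition~6 almost free rather than ``the main obstacle'': every relevant vertex sits on the arc, the rest of $B'$ lies in the convex hull of~$P$, and the peripheral bubbles are confined to isosceles triangles of apex angle $\delta<\varepsilon/2$ over the outer edges, so the downward cone $\langle\pi+\varepsilon,2\pi-\varepsilon\rangle$ at each relevant vertex is unobstructed by convexity.
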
                                            
\begin{proof}
Let us first introduce some terminology (see Figure~\ref{fig-blechy}). Let
$B\in\B(\Delta)$ be a labelled 
bubble. Recall from Definition~\ref{def-dual} that the dual of $B$, denoted 
by $\wB$, is a rooted binary tree whose root corresponds to the root face 
of~$B$. For an internal face $\Phi$ of $B$, we let $\wf$ denote the 
corresponding node of~$\wB$. We distinguish several types of nodes in~$\wB$. 
A node $\wf$ is called $\emph{relevant node}$, if the bottom vertex of 
the face $\Phi$ is a relevant vertex of~$B$. A node $\wf$ of $\wB$ is called 
\emph{peripheral} if the subtree of $\wB$ rooted at $\wf$ does not contain 
any relevant node, in other words, neither $\wf$ nor any descendant of $\wf$ 
is relevant. A node is \emph{central} if it is not peripheral. Note that the 
central nodes induce a subtree of $\wB$; we let $\wB'$ denote this subtree. 
By construction, all the leaves of $\wB'$ are relevant nodes (but there may 
be relevant nodes that are not leaves). 

\begin{figure}
 \includegraphics{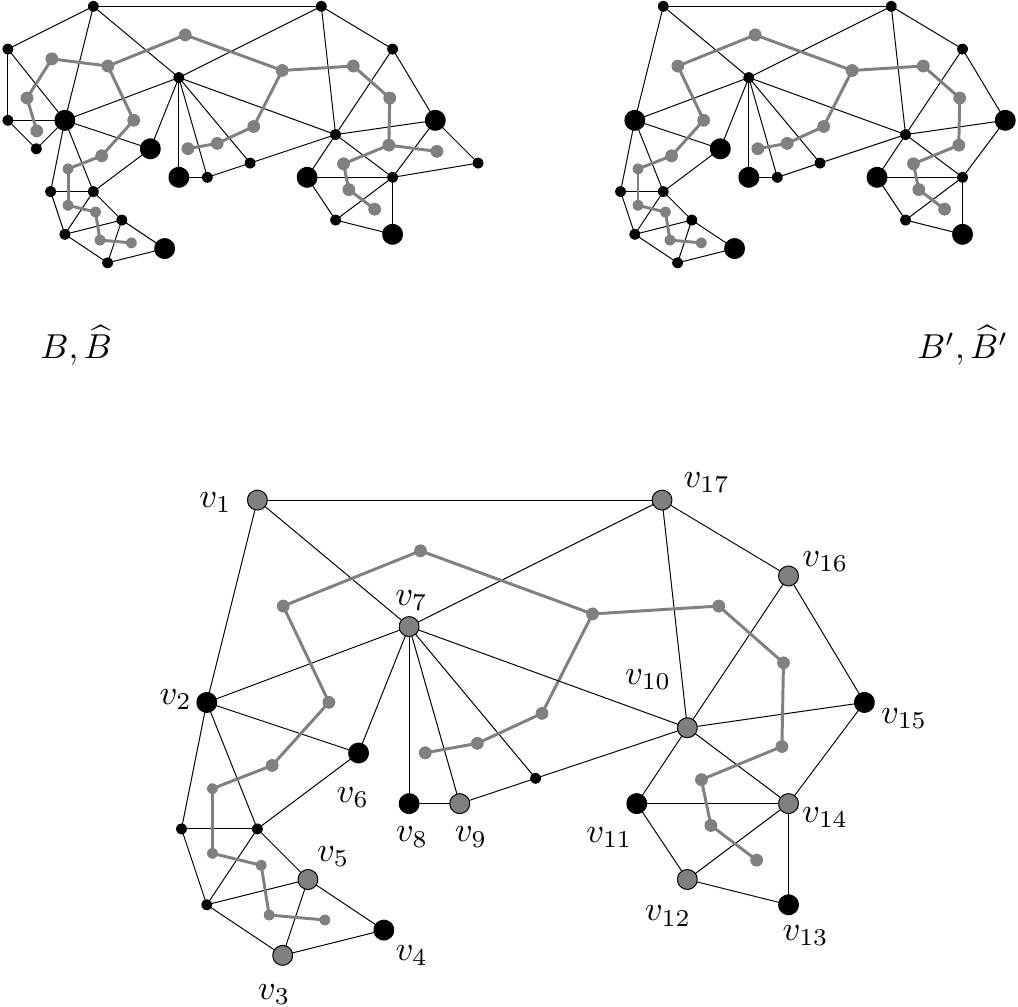}
\caption{An example of a labelled bubble $B$ with its dual tree $\wB$.
Relevant vertices are represented by large black disks. The large gray disks of
the bottom figure represent the non-relevant priority
vertices.}\label{fig-blechy}
\end{figure}

A node $\wf$ of $\wB'$ is a \emph{branching node} if both its children 
belong to~$\wB'$ as well. A node of $\wB'$ is a \emph{connecting node} if 
it is neither relevant nor branching. By definition, each connecting node has 
a unique child in~$\wB'$, and the connecting nodes induce in $\wB'$ a 
disjoint union of paths. We call these paths \emph{the connections}.
              
We say that a face $\Phi$ of $B$ is al \emph{relevant face} if the corresponding node 
$\wf$ is a relevant node. Peripheral faces, branching faces and connecting 
faces are defined analogously. Let $B'$ be the subgraph of $B$ whose dual 
is~$\wB'$. If $\wB'$ is empty, define $B'$ to be the trivial bubble 
consisting of the root edge of~$B$. In any case, $B'$ is a subbubble of $B$ and has the same
root edge as~$B$.

Note that since every leaf of $\wB'$ is a relevant node, and since $B$ has 
at most $3\Delta$ relevant vertices by definition of $\B(\Delta)$, the tree 
$\wB'$ has at most $\cO(\Delta)$ leaves and consequently at most 
$\cO(\Delta)$ branching nodes.

Let us now describe the basic idea of the proof. We begin by specifying the 
set~$P$ of points. The points of $P$ will form a convex cup inside the 
triangle~$T$. For a given bubble $B\in\B(\Delta)$, we construct the embedding 
$\Em_B$ in three steps. In the first step, we take all the vertices of $B$ 
that belong to relevant faces and branching faces, and embed them to the 
points of~$P$. In the second step, we embed all the connecting faces. Each 
connection in $\wB'$ corresponds to a (possibly degenerate) adder contained in~$B'$, 
whose head and tail have been embedded in the first step. Using the 
construction from Lemma~\ref{lem-adder}, we insert these adders into the 
embedding. Thus, in the first two steps, we construct an embedding of $B'$. 
In the third step, we extend this embedding into an embedding of $B$ by 
adding the peripheral faces. These faces form a disjoint union of 
subbubbles, each of them rooted at an edge belonging to the outer face 
of~$B'$. We use Corollary~\ref{cor-irelev} to embed each of these subbubbles 
into a thin triangle above a given root edge.
                                                
Let us describe the individual steps in detail. Set $D=18\Delta$. Recall 
that $T$ is an isosceles triangle with base $ab$. Let $C$ be any circular arc 
with endpoints $a$ and $b$, drawn inside~$T$. Choose a sequence $p_1, p_2, 
\dotsc, p_D$ of distinct points of $C$, in such a way that $p_1=a$, $p_D=b$, 
and the remaining points are chosen arbitrarily on $C$ in order to form a 
left-to-right sequence. Let $P$ be the set $\{p_1,\dotsc,p_D\}$. 
                                                     
Let us say that a vertex $v$ of $B$ is a \emph{priority vertex} if it either 
belongs to a relevant face, or it belongs to a branching face, or it belongs 
to the root edge of~$B$. Note that all priority vertices actually belong 
to~$B'$, and that each relevant vertex is a priority vertex as well. Let $\ell$ 
be the number of priority vertices. We know 
that $B$ has at most $3\Delta$ relevant faces. Since every leaf of $\wB'$ 
represents a relevant face, we see that $B'$ has at most $3\Delta-1$ branching 
faces. This implies that $\ell<D=18\Delta$. 

Let $v_1,v_2,\dotsc,v_\ell$ be the sequence of all the priority vertices of 
$B$, listed in counterclockwise order of their appearance on the outer face 
of $B$, in such a way that $v_1$ and $v_\ell$ are the vertices of the root 
edge of~$B$. For each $i\in\{1,\dotsc,\ell-1\}$, we embed the vertex $v_i$ 
on the point $p_i$, while the vertex $v_\ell$ is embedded on the point 
$v_D=b$. Note that this embedding guarantees that the root edge of $B$ 
coincides with the segment $ab=p_1 p_D$. Moreover, since this embedding 
preserves the cyclic order of the vertices on the boundary of the outer face, 
we know that the edges induced by the priority vertices do not cross.
This completes the first step of the embedding.
                                                                     
In the second step, we describe the embedding of the connecting faces of~$B$. 
Let $\Phi_1, \Phi_2,\dotsc,\Phi_k$ be a sequence of faces of $B$ 
corresponding to a connection in $\wB$, where we assume that for each $i<k$, 
the node $\wf_i$ is the parent of $\wf_{i+1}$ in~$\wB$. See
Figure~\ref{fig-adderA}.
Let $x$ be the left vertex of $\Phi_1$ and let $y$ be the right vertex 
of~$\Phi_1$. The vertices $x$ and $y$ either form the root edge of $B$, or 
they belong to the parent face of $\Phi_1$, which is either a relevant face 
or a branching face. In either case, both $x$ and $y$ are priority vertices. 
In particular, $x$ corresponds to a point $p_m\in P$, and $y$ corresponds to $p_n\in 
P$, for some $m<n$.

\begin{figure}
\hfil\includegraphics{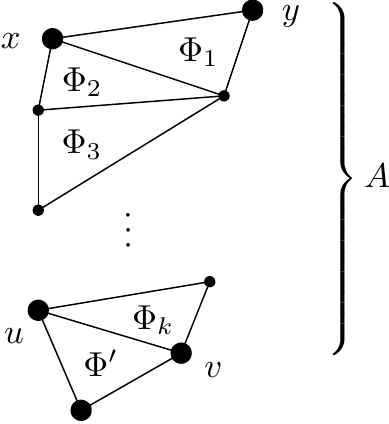}
\caption{An adder representing a connection in $\wB$.}\label{fig-adderA}
\end{figure}

Consider now the face $\Phi_k$. Since it is neither relevant nor branching, 
it has a unique child face $\Phi'$ in~$B'$. The face $\Phi'$ is relevant or 
branching, so all its vertices are priority vertices. 
Let $u$ be the left vertex of $\Phi'$ and let $v$ be its right vertex. The 
edge $uv$ is the intersection of $\Phi'$ and $\Phi_k$. Let $A$ be the adder 
formed by the union of the faces $\Phi_1,\dotsc,\Phi_k$, with head $xy$ and 
tail $uv$. Note that this adder does not contain any other priority vertices 
apart from $x$, $y$, $u$ and~$v$. In particular, the vertex $u$ is either 
equal to $x$, or it corresponds to $p_{m+1}$. For the vertex $v$, we have 
three possibilities: either $v=y$, or $v=p_{n-1}$, or $v=p_{\ell-1}$ and  
$y=p_D$.

Let us first deal with the case when the adder $A$ is degenerate, i.e., 
either $x=u$ or $y=v$. We first define a set $Q$ of auxiliary points (see
Figure.~\ref{fig-points-q}. For 
every $i<D$, consider the segment $p_ip_{i+1}$, and subdivide this segment 
with $\Delta-2$ new points $q^i_1,q^i_2,\dotsc,q^i_{\Delta-2}$. Next, for 
$i<D-1$, consider also the segment $p_ip_D$ and subdivide it with $\Delta-2$ 
points $\tilde q^i_1,\tilde q^i_2,\dotsc,\tilde q^i_{\Delta-2}$. Let $Q$ be 
the set of all the points $q_j^i$ and $\tilde q_j^i$, for all $i$ and~$j$. 

\begin{figure}
\hfil\includegraphics{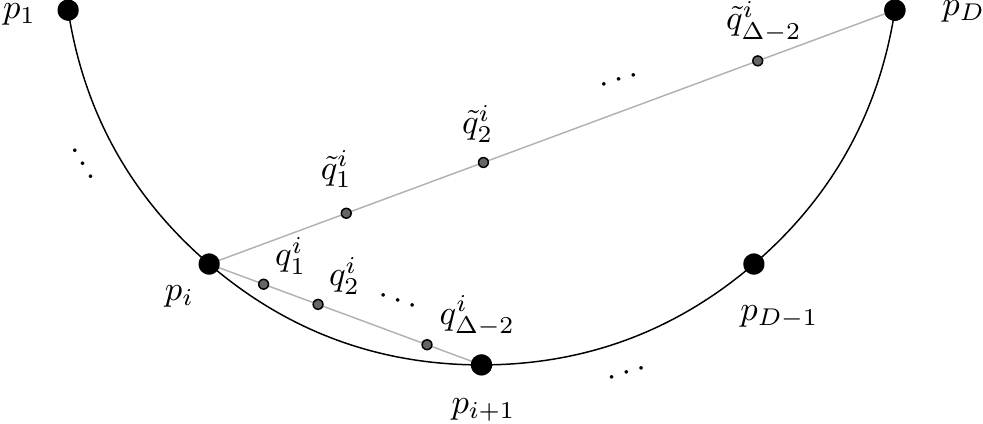}
\caption{The auxiliary points from the set $Q$.}\label{fig-points-q}
\end{figure}

Assume now that $A$ is a degenerate adder with $x=u$ (the case when $y=v$ is 
analogous). Recall that $A$ has $k$ internal faces $\Phi_1,\dotsc,\Phi_k$. 
All these faces share the vertex $x$, and in particular, $x$ has degree $k+1$ 
in~$A$. This shows that $k<\Delta$, and consequently there are at most 
$\Delta -2$ non-priority vertices in $A$, all of them on a path from $v$ 
to~$y$. See Figure~\ref{fig-degenerate}. If $v=p_{n-1}$, we embed these
non-priority vertices on the points 
$q_1^{n-1},\dotsc, q_{k-1}^{n-1}$. On the other hand, if $v=p_{\ell-1}$ and 
$y=p_D$, we embed the non-priority vertices of $A$ on the points $\tilde 
q_1^{\ell-1},\dotsc, \tilde q_{k-1}^{\ell-1}$. This determines the embedding of~$A$.

\begin{figure}
\includegraphics[width=\textwidth]{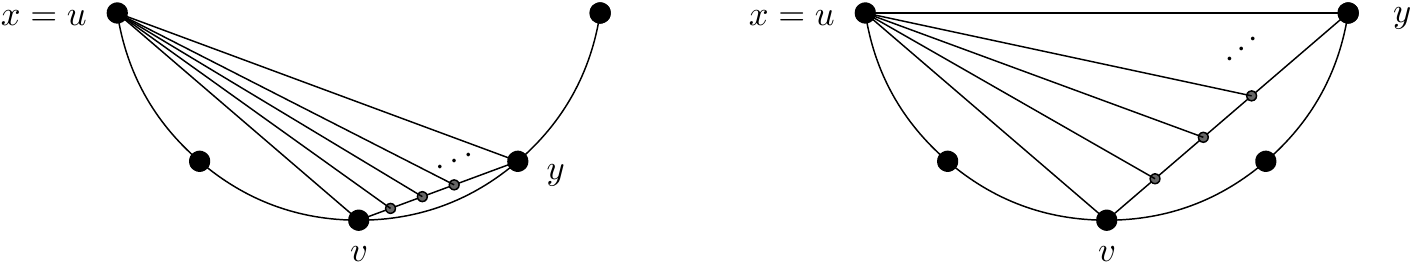}
\caption{The embedding of a degenerate connection adder.}\label{fig-degenerate}
\end{figure}

Consider now the case when $A$ is non-degenerate. The four vertices $x$, $y$, 
$u$ and $v$ form a convex quadrilateral, and we embed $A$ inside this quadrilateral,
using the construction of Lemma~\ref{lem-adder}. This again determines the 
embedding of~$A$.

Using the constructions described above, we embed all the adders representing connections in~$\wB$. Note that each adder is embedded inside the convex hull of its head 
and tail. Moreover, if $A$ and $A'$ are adders representing two different 
connections, the convex hull of the head and tail of $A$ is disjoint from the 
convex hull of the head and tail of $A'$, except for at most one vertex 
shared by the two adders. This shows that the embedding is indeed a plane 
embedding of the graph~$B'$, completing the second step of the construction. 

Before we describe the last step, let us estimate the number of vertices, 
edge-slopes and internal faces that may arise in the first two steps. 
Clearly, any relevant vertex is embedded on a point from the set $P$, which 
has size $\cO(\Delta)$ and does not depend on the bubble~$B$.

Any edge $e$ embedded in the first two steps may have one of the following 
forms.
\begin{itemize}
\item The edge $e$ connects two points from $P$. Such edges can take at most 
$\cO(\Delta^2)$ slopes.
\item The edge $e$ connects a vertex from $P$ to a vertex from $Q$. This yields 
$\cO(\Delta^3)$ possible slopes.
\item The edge $e$ connects two vertices of $Q$. This is only possible when 
both vertices of $e$ belong to a segment determined by a pair of points 
in~$P$. The slope of $e$ is then equal to a slope determined by two points 
from~$P$.
\item 
The edge $e$ belongs to a non-degenerate adder $A$ representing a connection 
in~$B$. In the embedding from Lemma~\ref{lem-adder}, the edges of a given adder $A$ determine at most 
$\cO(\Delta)$ slopes, and these slopes only depend on the four 
vertices forming the head and tail of~$A$. This fourtuple of vertices has 
the form $\{p_i,p_{i+1},p_{j-1},p_j\}$ or $\{p_i,p_{i+1},p_{j-1},p_D\}$. There 
are $\cO(\Delta^2)$ such fourtuples and hence $\cO(\Delta^3)$ possible slopes 
for the edges of this type.
\end{itemize}             
Overall, there is a set of $\cO(\Delta^3)$ slopes, independent of $B$, such 
that any edge embedded in the first two steps has one of these slopes.

Next, we count homothecy types of internal faces. Any internal face $\Phi$ 
embedded in the first two steps has one of the following types.
\begin{itemize}
\item
All the vertices of $\Phi$ belong to $P$. There are $\cO(\Delta^3)$ such faces. 
\item 
$\Phi$ has two vertices from $P$ and one vertex from $Q$. In such case the 
triple of vertices of $\Phi$ must be of one of these forms, for some values of 
$i$, $j$ and $k$: $\{p_i,p_j,q^j_1\}$, or $\{p_i,p_j,q^{j-1}_k\}$, or 
$\{p_i,p_D,\tilde q^j_k\}$. There are $\cO(\Delta^3)$ such triples.
\item                             
$\Phi$ has two vertices from $Q$ and one vertex from~$P$. In such case the 
two vertices from $Q$ are of the form $\{q_j^i, q_{j+1}^i\}$ or $\{\tilde 
q_j^i, \tilde q_{j+1}^i\}$ for some $i$ and $j$. This again gives 
$\cO(\Delta^3)$ possibilities for~$\Phi$.
\item                     
$\Phi$ is an internal face of a non-degenerate adder, embedded by 
Lemma~\ref{lem-adder}. Lemma~\ref{lem-adder} shows that the internal faces of 
such an adder form $\cO(\Delta)$ homothecy types depending only on the 
position of head and tail. Since there are $\cO(\Delta^2)$ positions for head 
and tail, this gives $\cO(\Delta^3)$ triangle types up to homothecy.
\end{itemize}                                      
We conclude that each internal face of $B'$ is homothetic to one of $\cO(\Delta^3)$ 
triangles, and these triangles do not depend on~$B'$.

We next estimate the number of slopes formed by edges on the outer face 
of~$B'$. For $e$ on the outer face of~$B'$ there are two possibilities.
\begin{itemize}
\item 
If both endpoints of $e$ are priority vertices, or if $e$ belongs to a 
connection represented by a degenerate adder, then the line determined by the 
segment~$e$ passes through two points of~$P$. In particular, such a segment 
$e$ must have one of $\cO(\Delta^2)$ slopes determined by~$P$.
\item                                                         
Suppose $e$ belongs to the outer face of a non-degenerate adder $A$. By 
Lemma~\ref{lem-adder}, the edges of the outer face of $A$ have $\cO(1)$ 
distinct slopes, depending on the head and tail of~$A$. Overall, such edges 
have at most $\cO(\Delta^2)$ slopes.
\end{itemize}
This shows that the slopes of the edges of the outer face of $B'$ all belong 
to a set of $\cO(\Delta^2)$ slopes.

To finish the proof, it remains to perform the third step of the 
construction, where we embed the peripheral faces. Fix an angle $\delta>0$ 
such that $\delta<\varepsilon/2$ and any two distinct edge-slopes used in the 
first two steps of the construction differ by more than $2\delta$. Let $e$ be 
an edge of the outer face of~$B'$. Let $T_e$ be an isosceles triangle whose 
base is the edge $e$, whose internal angles have size $\delta$, $\delta$, and 
$\pi-2\delta$, and which lies in the outer face of~$B'$. It is easy to check 
that our choice of $\delta$ guarantees that for any two edges $e$ and $f$ on 
the outer face of $B'$, the triangles $T_e$ and $T_f$ are disjoint, except 
for a possible common vertex of $e$ and~$f$.

Let $\wB_0$ be a maximal subtree of $\wB$ formed entirely by peripheral 
nodes, and let $B_0$ be the dual of~$\wB_0$. Note that $B_0$ is a subbubble 
of $B$ rooted at an edge of the outer face of~$B'$. Let $e$ be the root edge 
of~$B_0$. Using Corollary~\ref{cor-irelev}, we embed $B_0$ inside $T_e$, in 
such a way that the root edge of $B_0$ coincides with~$e$. This embedding of 
$B_0$ uses $\cO(\Delta)$ edge-slopes and $\cO(\Delta)$ triangle types for its 
internal faces, and these edge-slopes and triangle types only depend on the 
slope of~$e$. 

Since the edges on the outer face of $B'$ may have at most $\cO(\Delta^2)$ 
edge-slopes, we may embed all the peripheral faces of $B$, while using only 
$\cO(\Delta^3)$ edge-slopes and $\cO(\Delta^3)$ triangle types in addition to 
the edge-slopes and triangle types used in the first two steps of the construction.

This completes the last step of the construction. It is easy to check that in 
the obtained embedding of $B$, any relevant vertex has visibility in any 
direction from the set $\langle \pi+\varepsilon, 2\pi-\varepsilon\rangle$, and 
the remaining claims of the lemma have already been verified.
\end{proof}
            
At last, we are ready to give the proof of the Tripod Drawing Lemma.
Let us recall its statement:

\begin{lemmaagain}   
For every $\Delta$ there is a set of slopes $S$ of size $\cO(\Delta^3)$, a set 
of points $P$ of size $\cO(\Delta^2)$, and a set of triangles $R$ of size 
$\cO(\Delta^3)$, such that every labelled tripod $T\in\Tr(\Delta)$ has a 
straight-line embedding $\Em_T$ with the following properties:
\begin{enumerate}
\item 
The slope of any edge in the embedding $\Em_T$ belongs to~$S$.
\item 
Each relevant vertex of $\Em_T$ is embedded on a point from~$P$.
\item 
Each internal face of $\Em_T$ is homothetic to a triangle from~$R$.
\item                                                           
The central vertex of $\Em_T$ is embedded in the origin of the 
plane.
\item 
Any vertex of $\Em_T$ is embedded at a distance at most $1$ from the origin.
\item
Each spine of $T$ is embedded on a single ray starting from the origin. The three 
rays containing the spines have directed slopes $0$, $2\pi/3$ and~$4\pi/3$. 
Let these three rays be denoted by $r_1$, $r_2$ and $r_3$, respectively.  
\item      
Let $\uhel{i}{j}$ denote the closed convex region whose boundary is formed by 
the rays $r_i$ and~$r_j$. Any relevant vertex of $\Em_T$ embedded in the 
region $\uhel{1}{2}$ (or $\uhel{2}{3}$, or $\uhel{1}{3}$) has visibility in 
any direction from the set $\langle \varepsilon, 2\pi/3-\varepsilon\rangle$ 
(or $\langle 2\pi/3+\varepsilon, 4\pi/3-\varepsilon\rangle$, or $\langle 
4\pi/3+\varepsilon,2\pi-\varepsilon\rangle$, respectively). 

Note that the three regions $\uhel{1}{2}$, $\uhel{2}{3}$ and $\uhel{1}{3}$ 
are not disjoint. For instance, if a relevant vertex of $T$ is embedded 
on the ray $r_1$, it belongs to both $\uhel{1}{2}$ and $\uhel{1}{3}$, and 
hence it must have visibility in any direction from the set $\langle 
\varepsilon, 2\pi/3-\varepsilon\rangle\cup \langle 
4\pi/3+\varepsilon,2\pi-\varepsilon\rangle$. 
\end{enumerate}
\end{lemmaagain}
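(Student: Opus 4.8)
The plan is to realise the tripod $T$ by placing its central vertex at the origin and drawing each of its three spines along one of the rays $r_1,r_2,r_3$ of directed slopes $0,2\pi/3,4\pi/3$, so that properties~(4) and~(6) hold by construction. These three rays cut the plane into the three $120^\circ$ sectors $\uhel{1}{2}$, $\uhel{2}{3}$, $\uhel{1}{3}$; the double bubble sitting over a spine edge of a leg contributes one bubble into each of the two sectors bordering that leg's ray. Every such bubble is drawn as a thin sliver hugging its ray, using the embeddings supplied by Lemma~\ref{lem-bubble} (for relevant bubbles) and Corollary~\ref{cor-irelev} (for irrelevant ones), suitably rotated and reflected so that its thin isosceles carrier triangle opens into the adjacent sector. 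By the three-fold rotational symmetry of the rays it suffices to describe the construction for one ray, say $r_1$, and to transfer it to $r_2$ and $r_3$ by rotations through $2\pi/3$ and $4\pi/3$.

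The heart of the argument is a \emph{slot discipline} that keeps the relevant vertices on a fixed point set. Recall that $T$ has at most $18\Delta$ relevant bubbles and at most $3\Delta$ relevant vertices. I would fix, once and for all (depending only on $\Delta$), a sequence of $\cO(\Delta)$ \emph{slots}: pairwise disjoint thin isosceles triangles, all homothetic to the triangle of Lemma~\ref{lem-bubble}, seated at prescribed positions along each ray and on each of its two sides, each oriented by the rotation of its ray together with a reflection matching the side. A relevant double bubble is routed into the next free slot(s) in the order in which it occurs along the spine and embedded there by Lemma~\ref{lem-bubble}; the irrelevant bubbles lying between two consecutive relevant ones are packed, with geometrically shrinking scale, into the ray-interval between the corresponding slots, each drawn by Corollary~\ref{cor-irelev} inside a thin triangle homothetic to a single fixed shape. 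Because Lemma~\ref{lem-bubble} places the relevant vertices of a bubble on a fixed set of $\cO(\Delta)$ points inside its carrier triangle, the union of these point sets over all $\cO(\Delta)$ slots is a fixed set $P$ of size $\cO(\Delta^2)$, giving property~(2); the relevant spine vertices are incident to relevant edges and hence sit at the (fixed) slot endpoints, which we also add to~$P$.

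The counting of slopes and triangles then follows the same principle together with scale- and symmetry-invariance. Since all slots are homothetic and there are only the six orientations arising from the three rotations and the reflection, Lemma~\ref{lem-bubble} contributes only $\cO(\Delta^3)$ distinct slopes and $\cO(\Delta^3)$ homothety types in total, independent of the positions and scales of the slots; the irrelevant bubbles add $\cO(\Delta)$ further slopes and triangles (the slope sets of homothetic carriers coincide, so the shrinking creates no new slopes), and the spine edges contribute the $\cO(1)$ ray slopes. This yields the bounds $\cO(\Delta^3)$ on $S$ and $R$ of properties~(1) and~(3). Property~(5) is arranged by letting the slots occupy a bounded initial part of each ray and by the geometric shrinking of the packed irrelevant tails, so that the whole drawing lies within the unit disk.

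The step I expect to be the main obstacle is verifying the visibility condition~(7). For this I would orient each slot so that the downward visibility cone $\langle\pi+\varepsilon,2\pi-\varepsilon\rangle$ guaranteed by Lemma~\ref{lem-bubble} is rotated, and reflected for the counterclockwise side, exactly onto the cone required for the sector into which the bubble opens; a short case check over the two sides of each ray (reducing to one case by symmetry) shows that the targeted cones, such as $\langle\varepsilon,2\pi/3-\varepsilon\rangle$ for $\uhel{1}{2}$, are sub-cones of the cones actually produced. It then remains to show that no \emph{other} part of the drawing blocks these directions. Here I would use that every bubble is a sliver whose carrier triangle has half-angle $\varepsilon/2$: a ray leaving a relevant vertex in a direction of the required cone points into the open interior of a sector, staying at angular distance at least $\varepsilon$ from both bounding rays, and a direct computation shows that such a ray rises faster than the $\varepsilon/2$-envelope of every sliver hugging those rays (including the further bubbles of the same leg and the bubbles of the neighbouring leg). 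Pinning down this clearance, simultaneously with keeping consecutive slivers on a ray disjoint except at their shared spine vertex, is the delicate geometric core; the margin $\varepsilon/2<\varepsilon$ built into the carrier triangles of Lemma~\ref{lem-bubble} is exactly what makes it go through.
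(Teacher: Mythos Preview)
Your approach is essentially the paper's: center at the origin, spines along $r_1,r_2,r_3$, embed each bubble inside a thin isosceles sliver via Lemma~\ref{lem-bubble} (relevant) or Corollary~\ref{cor-irelev} (irrelevant), and count positions, slopes and triangle types exactly as you do. The one bookkeeping difference is that the paper does not separate ``slots for relevant bubbles'' from ``ray-intervals between slots for irrelevant ones''. Instead it partitions each leg into at most $D=13\Delta$ consecutive \emph{parts}, where a part is either a single relevant double bubble or a maximal irrelevant run, and gives the $i$-th part the segment $[(i-1)/D,\,i/D]$ on the spine; an irrelevant run of $k$ double bubbles simply subdivides its segment into $k$ equal pieces. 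This is cleaner than your scheme in one respect: when two relevant double bubbles are consecutive along the spine, your fixed slots with a nonzero gap between them would break spine continuity (the shared spine vertex has nowhere to go), whereas the paper's adjacent equal-length parts share that vertex automatically. The fix is immediate and does not affect any of the counts. Your geometric shrinking is also unnecessary---equal subdivision within a part already keeps everything inside the unit disk---though it does no harm. Finally, your discussion of visibility is more explicit than the paper's, which simply asserts that with $\varepsilon/2$-slivers property~(7) is ``straightforward to check''; your observation that directions at angular distance $\ge\varepsilon$ from the rays outrun the $\varepsilon/2$-envelope of every sliver is exactly the computation that justifies that assertion.
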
                 
\begin{proof}
Fix a tripod $T\in\Tr(\Delta)$. Let $X$, $Y$, and $Z$ be the three legs of the 
tripod $T$. The center $c$ of the tripod will coincide with the origin of the 
coordinate system, and the spines of the three legs will be embedded onto 
three rays with slopes $0$, $2\pi/3$ and $4\pi/3$ starting at the origin. We 
will now describe how to embed the leg $X$ onto the horizontal ray $(c,0)$. 
The embeddings of the remaining two legs are then built by an analogous 
procedure, rotated by $2\pi/3$ and $4\pi/3$.

Let $X$ be a fixed leg of the tripod, represented as a sequence 
$D_1,D_2,\dotsc,D_k$ of double bubbles, ordered from the center outwards. 
Recall that a bubble is called relevant if it contains at least one relevant 
vertex. We will also say that a double bubble is relevant if at least one of 
its two parts is relevant. 
                                                     
Define a parameter $D$ by $D=13\Delta$.
The leg $X$ can have at most $6\Delta$ relevant double bubbles. A maximal 
consecutive sequence of the form $D_i,D_{i+1},\dotsc,D_j$ in which each 
element is an irrelevant double bubble will be called an \emph{irrelevant 
run}. We partition $X$ into a sequence of \emph{parts} 
$P_1,P_2,\dotsc,P_\ell$, where a part is either a single relevant double 
bubble, or a nonempty irrelevant run. Since by definition no two irrelevant 
runs are consecutive, we see that $X$ has at most $12\Delta+1<D$ parts.

Let $T_\varepsilon$ be an isosceles triangle with internal angles of size 
$\varepsilon/2$, $\varepsilon/2$ and $\pi-\varepsilon$ whose base edge is 
horizontal. From Lemma~\ref{lem-bubble}, we know that there is a set of 
points $P_\varepsilon\subset T_\varepsilon$ of size $\cO(\Delta)$, a 
set of slopes $S_\varepsilon$ of size $\cO(\Delta^3)$ and
set of triangles $R_\varepsilon$ of size $\cO(\Delta^3)$
such that any bubble 
of $B\in\B(\Delta)$ can be embedded inside $T_\varepsilon$ using slopes from 
$S_\varepsilon$ in such a way that each relevant vertex of $B$ coincides with 
a point from the set $P_\varepsilon$ and the internal faces of the embedding 
are homothetic to triangles in $R$. Let $\Em_B$ denote this embedding.

We will combine these embeddings to obtain an embedding of the whole leg $X$. 
To each of the at most $D$ parts of $X$ we will assign a segment of length 
$L=\frac{1}{D}$ on the horizontal ray $(c,0)$. 

Assume first that $P_i$ is a part of $X$ consisting of a single relevant 
double bubble, formed by a pair of bubbles $B$ and~$C$. We will embed $P_i$ 
in such a way that the common root edge of $B$ and $C$ coincides with a 
horizontal segment $e_i$ of length $L$, whose endpoints have horizontal 
coordinates $(i-1)L$ and $iL$. The two bubbles $B$ and $C$ are then embedded 
inside two scaled and translated copies of $T_\varepsilon$ that share a 
common base $e_i$, using the embeddings $\Em_B$ and $\Em_C$, possibly 
reflected along the horizontal axis.

Now assume that $P_i$ is a part of $X$ that consists of an irrelevant run of 
$k$ irrelevant double bubbles $D_j,D_{j+1},\dotsc, D_{i+k-1}$. We embed the 
root edge of each double bubble onto a segment of length $L/k$, and embed the 
rest of the double bubble into a scaled and translated copy of 
$T_\varepsilon$. We then concatenate these embeddings to obtain an embedding 
of the whole irrelevant run, which will occupy a segment of length exactly 
$L$ on the spine of~$X$.         

Overall, since the leg has at most $D$ parts, the whole leg will be 
embedded at distance at most 1 from the origin. It is easy to see that the 
embedding of $X$ uses at most $2|S_\varepsilon|$ slopes and $2|R_\varepsilon|$ 
triangles for faces (up to scaling). The embedding of the 
whole tripod will then require at most $6|S_\varepsilon|=\cO(\Delta^3)$ 
slopes and $6|R_\varepsilon|=\cO(\Delta^3)$ non-homothetic triangles. 

Let us estimate the number of possible points where a relevant vertex may be 
embedded. For every relevant double bubble, there are at most $D$ 
possibilities where its root edge may be embedded within the embedding of $X$. 
Since a bubble may be either above or below the spine, each relevant bubble 
has at most $2D$ possibilities where it may appear within $X$, and at 
most $6D$ possibilities within the whole tripod. As soon as we fix the 
embedding of the root edge and the relative position of the bubble with respect 
to its spine, we are left with at most $|P_\varepsilon|$ possibilities where a 
relevant vertex may be embedded. There are overall at most $6D 
|P_\varepsilon|= \cO(\Delta^2)$ possible embeddings of relevant vertices.

Using Lemma~\ref{lem-bubble}, it is straightforward to check that the 
embedding satisfies the required visibility properties. Lemma~\ref{lem-sp3t}
(and hence also Proposition~\ref{pro-sp3t} and 
Theorem~\ref{thm:tree}) is now proved.
\end{proof}

%
%
%
%

\section{Series-parallel graphs of maximum degree 3}
                                  
In this section, we prove Theorem~\ref{thm:spmax3}, which states that each 
series-parallel graph of maximum degree at most 3 has planar slope number 
at most 3. This bound is optimal, since it is not difficult to see that, e.g., 
the complete bipartite graph $K_{2,3}$, which is series-parallel, cannot be 
embedded with fewer than 3 slopes.

We will in fact show that any series-parallel graph with $\Delta\le 3$ can be 
embedded using the slopes from the set $S=\{0, \pi/4, -\pi/4\}$. This 
particular choice of $S$ is purely aesthetic, since for any other set $S'$ of 
three slopes there is an affine bijection of the plane that maps segments with 
slopes from $S'$ to segments with slopes from $S$. Thus, any plane graph that 
has an embedding with three distinct slopes also has an embedding with the 
slopes from the set~$S$.         

Throughout this section, segments of slope $\pi/4$ (or 0, or $-\pi/4$) will be 
known as \emph{increasing} (or \emph{horizontal}, or \emph{decreasing}, respectively).

Let us first define series-parallel graphs.
                        
A \emph{two-terminal graph} $(G,s,t)$ is a graph together with two distinct 
prescribed vertices $s,t\in V(G)$, known as \emph{terminals}. The vertex $s$ 
is called \emph{source} and $t$ is called \emph{sink}. 

For a sequence $(G_1,s_1,t_1), (G_2,s_2,t_2),\dotsc, (G_k,s_k,t_k)$ of 
two-terminal graphs, we define the \emph{serialization} of the sequence to be 
the two-terminal graph $(G,s_1,t_k)$ obtained by identifying, for every 
$i\in\{1,\dotsc,k-1\}$ the vertex $t_i$ with the vertex $s_{i+1}$. The 
\emph{parallelization} of the sequence of two-terminal graph is the 
two-terminal graph $(H,s,t)$ obtained by identifying all the sources $s_i$ 
into a single vertex $s$ and all the sources $t_i$ into a single vertex $t$. 
Whenever we perform parallelization of a sequence of graphs, we assume that at 
most one graph of the sequence contains the edge from source to sink. Thus, 
the result of a parallelization is again a simple graph. Serialization and 
parallelization will be jointly called \emph{SP-operations}.
                                                      
A two-terminal graph $(G,s,t)$ is called \emph{series-parallel graph} or 
\emph{SP-graph} for short, if it either consist of a single edge connecting 
the vertices $s$ and $t$, or if it can be obtained from smaller SP-graphs by 
an SP-operation. 

If follows from the definition, that SP-graphs can be constructed from single 
edges by repeated serializations and parallelizations. In general, this 
construction is not unique. E.g., a path of length four whose endpoints are 
the terminals can be constructed as a serialization of four edges, or as a 
serialization of two paths of length two. 

It is often convenient to employ special type of SP-operation that makes the 
construction of an SP-graph unique. To this end, we say that an SP-graph 
$(G,s,t)$ is obtained by a \emph{reduced serialization} if it is obtained as a 
serialization of a sequence of SP-graphs $(G_1,s_1,t_1),\dotsc,(G_k,s_k,t_k)$ 
where none of the operands $(G_i,s_i,t_i)$ can be expressed as a serialization 
of smaller graphs. Similarly, a \emph{reduced parallelization} is a 
parallelization whose operands are SP-graphs that cannot be expressed as 
parallelizations of smaller SP-graphs. It is not difficult to see that every 
SP-graph that is not a single edge can be uniquely expressed as a result of a 
reduced SP-operation.

Before proving the theorem, we give some useful definitions. For a 
pair of integers $j$ and $k$, we say that a series-parallel graph $(G,s,t)$ is 
a \emph{$(j,k)$-graph} if $G$ has maximum degree three, and furthermore, the 
vertex $s$ has degree at most $j$ and the vertex $t$ has degree at most~$k$.

Let us begin by a simple but useful lemma.
\begin{lemma}
\label{lem-11}Let $(G,s,t)$ be a $(1,1)$-graph. Then $G$ is either a single 
edge, a serialization of two edges, or a (not necessarily reduced) 
serialization of three graphs $G_1$, $G_2$ and $G_3$, where $G_1$ and $G_3$ 
consist of a single edge and $G_2$ is a $(2,2)$-graph.
\end{lemma}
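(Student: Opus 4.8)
The plan is to analyze the structure of a $(1,1)$-graph $(G,s,t)$ by considering how it decomposes under SP-operations, using the fact that every SP-graph that is not a single edge arises from a reduced SP-operation. First I would observe that the source $s$ and sink $t$ each have degree at most $1$ in $G$, so in particular $G$ cannot be a (reduced) parallelization: a parallelization identifies at least two sources into the single vertex $s$, which would force $\deg(s)\ge 2$ (each operand contributes at least one edge incident to $s$), contradicting $\deg_G(s)\le 1$. Hence if $G$ is not a single edge, its outermost reduced operation must be a serialization.

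So assume $G$ is a reduced serialization of $G_1,\dotsc,G_k$ with terminals $s=s_1$, $t_i=s_{i+1}$, and $t=t_k$. The key point is to control the degrees at the internal cut-vertices and at the endpoints. Because $\deg_G(s)\le 1$ and $s$ is the source of $G_1$, the first operand $G_1$ must be a single edge (any non-trivial reduced-serialization operand would be a parallelization or would itself reduce further, and in either case would make the source have degree at least two, or violate reducedness of the outer serialization). Symmetrically $G_k$ is a single edge. The main structural work is then to show that the middle can be packaged into a single $(2,2)$-graph: I would set $G_2' = $ the serialization of $G_2,\dotsc,G_{k-1}$ and argue that this is a $(2,2)$-graph, i.e.\ its own source $t_1$ and sink $s_k$ have degree at most two. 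The internal terminal $t_1$ is incident to the edge $G_1$ on one side and to $G_2'$ on the other; since $G$ has maximum degree three, and $t_1$ already uses one edge toward $s$, its degree inside $G_2'$ is at most two, and similarly for $s_k$. This gives exactly the claimed decomposition into $G_1$ (single edge), $G_2$ ($=G_2'$, a $(2,2)$-graph), $G_3$ (single edge), which need not be a \emph{reduced} serialization since we have regrouped the middle operands.

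The small cases need separate mention to match the statement: if $k=1$ then $G$ is a single edge; if the middle is empty (so effectively $k=2$ with both operands single edges) then $G$ is a serialization of two edges. The hard part, or at least the part requiring the most care, will be the degree bookkeeping at the two internal cut-vertices $t_1$ and $s_k$: one must verify that collapsing $G_2,\dotsc,G_{k-1}$ into a single two-terminal graph $G_2$ genuinely yields degree at most two at both of its terminals, using $\Delta(G)\le 3$ together with the fact that each of these cut-vertices carries exactly one edge going out toward the single-edge end-pieces $G_1$ and $G_3$. I would also double-check that $G_2$ is well-defined as a two-terminal graph (its source and sink are distinct), which follows since $G$ has at least the three vertices $s, t_1, s_k$ on a path when the middle is nonempty, and otherwise we fall into the two-edge case.
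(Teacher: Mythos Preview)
Your proposal is correct and follows essentially the same approach as the paper: rule out parallelization at the top level using the degree-1 constraint on $s$ and $t$, take the reduced serialization, observe that the first and last operands must be single edges, and bundle the middle into a single $(2,2)$-graph. The paper's proof is terser and leaves the degree-checking at the inner terminals implicit, whereas you spell it out, but the argument is the same.
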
                                           
\begin{proof}
Assume $G$ is not a single edge. Then $G$ must have been obtained by a reduced 
serialization $H_1, H_2,\dotsc,H_k$ in which $H_1$ and $H_k$ consist of a 
single edge. If $k=2$, then $G$ is a serialization of two edges. If $k>2$, we 
let $G_1=H_1$, $G_2$ is the serialization of $H_2,\dotsc,H_{k-1}$, and 
$G_3=H_k$. The last case of the lemma then applies.
\end{proof}
               
We proceed with more terminology. An \emph{up-triangle} $abc$ is a right 
isosceles triangle whose hypotenuse $ab$ is horizontal and whose vertex $c$ is 
above the hypotenuse. We say that a series parallel graph $(G,s,t)$ 
has an \emph{up-triangle embedding} if it can be embedded inside an 
up-triangle $abc$ using the slopes from $S$, in such a way that the two vertices 
$s$ and $t$ coincide with the two endpoints of the hypotenuse of $abc$, and all 
the remaining vertices are either inside or on the boundary of~$abc$.
            
The concept of up-triangle embedding is motivated by the following lemma.

\begin{lemma}
\label{lem-up}
Every $(2,2)$-graph has an up-triangle embedding.
\end{lemma}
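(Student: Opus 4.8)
The plan is to prove Lemma~\ref{lem-up} by structural induction on the construction of the $(2,2)$-graph via reduced SP-operations. The base case is a single edge, which is embedded as the hypotenuse $st$ of an up-triangle, using only the horizontal slope. For the inductive step I would split according to whether the graph is obtained by a reduced serialization or a reduced parallelization, exploiting the degree bounds inherent in the $(2,2)$ condition. Since $s$ and $t$ have degree at most two, and every internal vertex has degree at most three, the combinatorial possibilities for how the top-level operation decomposes are tightly constrained, and this is what makes a clean inductive argument feasible.

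For the \emph{serialization} case, suppose $(G,s,t)$ is a reduced serialization of $(G_1,s_1,t_1),\dots,(G_k,s_k,t_k)$. The natural idea is to lay the terminals $s=s_1,t_1=s_2,\dots,t_k=t$ along the horizontal line (the hypotenuse of the target up-triangle), allotting to each factor $G_i$ a subsegment of $st$ as its own hypotenuse, and to recurse, placing each $G_i$ inside its own up-triangle sitting above its subsegment. Because the factors are glued only at single terminal vertices lying on the common horizontal line, and each up-triangle stays on or above its base, the sub-drawings occupy disjoint regions and do not cross. One must check that the shared vertices $t_i=s_{i+1}$ still respect the degree-three bound and that the slopes all remain in $S$; here I would invoke Lemma~\ref{lem-11} if an operand turns out to be a $(1,1)$-graph, reducing it to an edge or to a $(2,2)$-graph flanked by edges so the inductive hypothesis applies.

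For the \emph{parallelization} case, $(G,s,t)$ is a reduced parallelization of operands all sharing the terminals $s$ and $t$; since $s$ and $t$ have degree at most two in $G$, there are at most two operands (at most one of which may be the single edge $st$). If one operand is the edge $st$ it is drawn as the horizontal hypotenuse, and the other operand must be embedded strictly above it using the increasing and decreasing slopes to route its first and last edges away from $s$ and $t$ before recursing; if there are two non-edge operands one is reflected to lie below the hypotenuse. The point is that with at most two parallel branches the two sub-drawings can be separated by the horizontal line through $st$, one going up and one going down, so that each sits inside (a reflected copy of) an up-triangle and the union still fits inside a single up-triangle after rescaling.

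I expect the main obstacle to be the parallelization case, and specifically the interface at the terminals: an operand embedded by the inductive hypothesis places its edges incident to $s$ and $t$ with slopes dictated by its own up-triangle embedding, but when we stack a branch above the horizontal hypotenuse we must guarantee that the edges leaving $s$ and $t$ can be drawn with slopes in $S=\{0,\pi/4,-\pi/4\}$ without colliding with the horizontal branch and without violating the degree bound at $s$ or~$t$. Managing the geometry so that the recursively-obtained sub-triangle can be affinely rescaled and its boundary edges re-slanted into the allowed three directions — while keeping the drawing non-crossing and inside the enclosing up-triangle — is the delicate part; I anticipate this is where the precise shape of the up-triangle (right isosceles, hypotenuse horizontal) is used essentially, since it matches exactly the three available slopes.
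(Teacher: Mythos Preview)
Your serialization case is essentially right and matches the paper, though your remark about invoking Lemma~\ref{lem-11} there is misplaced: in a serialization of a $(2,2)$-graph every operand is automatically a $(2,2)$-graph (the shared internal vertices have degree at most~$3$, so each operand's terminals have degree at most~$2$), and induction applies directly.

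The parallelization case, however, has a real gap. You correctly observe there are at most two operands, but you miss the stronger fact that forces the argument: since $s$ and $t$ each have degree at most~$2$ and each operand contributes at least~$1$ to each, both operands are necessarily $(1,1)$-graphs. This is exactly where Lemma~\ref{lem-11} is used: each operand is either an edge, a path of two edges, or an edge followed by a $(2,2)$-graph followed by an edge. The terminal edges can be drawn as increasing/decreasing segments leaving $s$ and $t$, and the $(2,2)$-core in the middle sits in its own small up-triangle higher up inside the big one. Crucially, \emph{both} operands are embedded above the hypotenuse, with one operand's core stacked above the other's; the flanking edges with slopes $\pm\pi/4$ give you the vertical room to do this.

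Your proposed fix --- reflecting one operand below the hypotenuse --- does not yield an up-triangle embedding: the result would occupy a rhombus, not a triangle with horizontal base and apex above, and no rescaling repairs that. The delicate interface issue you anticipate at the terminals is real, but it is resolved not by reflection or affine tricks; it is resolved by the $(1,1)$ structure of the operands, which guarantees a single edge at each terminal that you are free to draw at slope $\pm\pi/4$.
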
                                      
\begin{proof}                        
Let $(G,s,t)$ be a $(2,2)$-graph. We proceed by induction on the size of $G$. If 
$G$ is a single edge, it obviously has an up-triangle embedding. Assume now that $G$ 
has been obtained by serialization of a sequence of graphs 
$G_1,G_2,\dotsc,G_k$. Since $G$ has maximum degree 3, all the graphs $G_i$ are 
necessarily $(2,2)$-graphs. By induction, all the graphs $G_i$ have an 
up-triangle embedding. We can join all these embeddings into a chain to obtain 
an up-triangle embedding of $G$ (see Figure~\ref{fig-up1}). 

\begin{figure}
\hfil\includegraphics[scale=0.6]{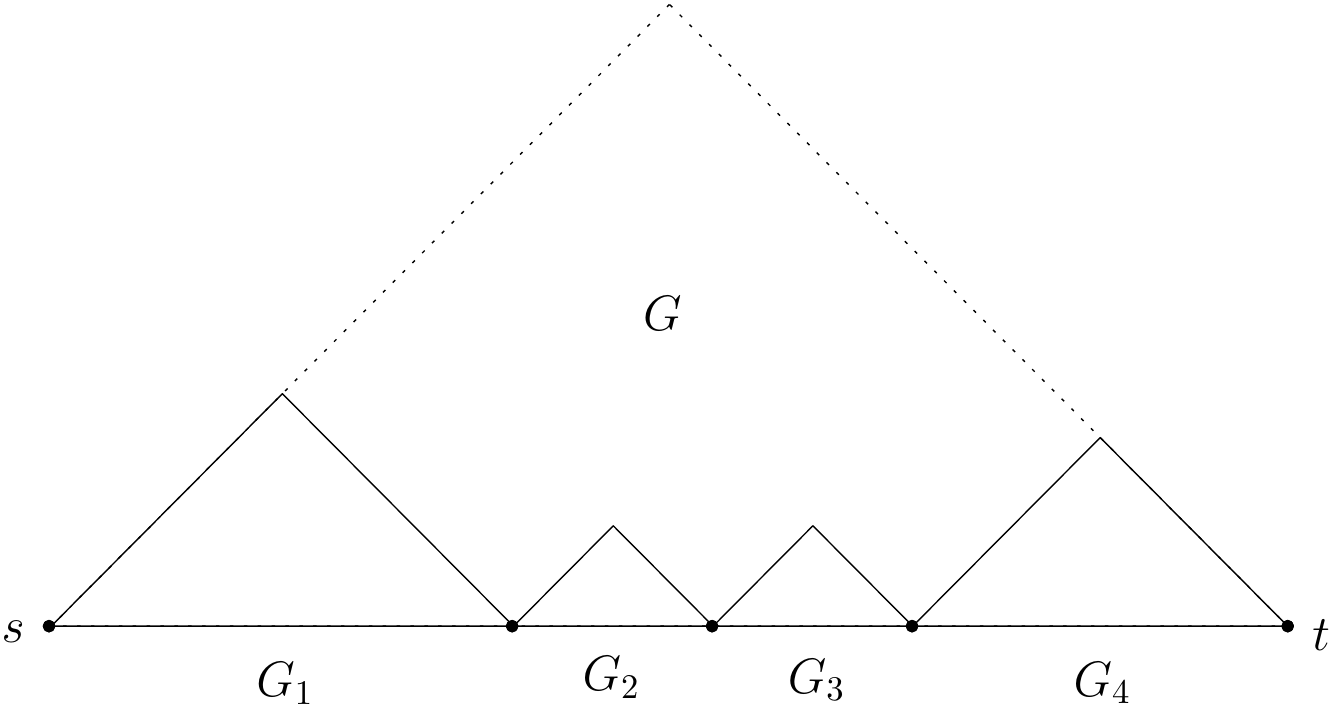}
\caption{Serialization of up-triangle embeddings yields an up-triangle 
embedding.}\label{fig-up1}
\end{figure}

Assume now that $G$ has been obtained by parallelization. Since $G$ is a 
$(2,2)$-graph, it must have been obtained by parallelizing two $(1,1)$-graphs 
$G_1$ and $G_2$. By Lemma~\ref{lem-11}, for each of the two graphs $G_i$ one of the following 
possibilities holds:
\begin{itemize}
\item $G_i$ is a single edge,
\item $G_i$ is a serialization of two edges $G_i^1$ and $G_i^2$, or
\item 
$G_i$ is a serialization of three graphs $G_i^1$, $G_i^2$ and 
$G_i^3$, where both $G_i^1$ and $G_i^3$ are single edges, and $G_i^2$ is a 
$(2,2)$-graph. By induction, we know that $G_i^2$ has an 
up-triangle embedding.
\end{itemize}    
In all the cases that may occur, we can obtain an up-triangle embedding of $G$ 
from the up-triangle embeddings of its subgraphs, as shown in 
Figure~\ref{fig-up2}.
\end{proof}             

\begin{figure}
\hfil\includegraphics[scale=0.6]{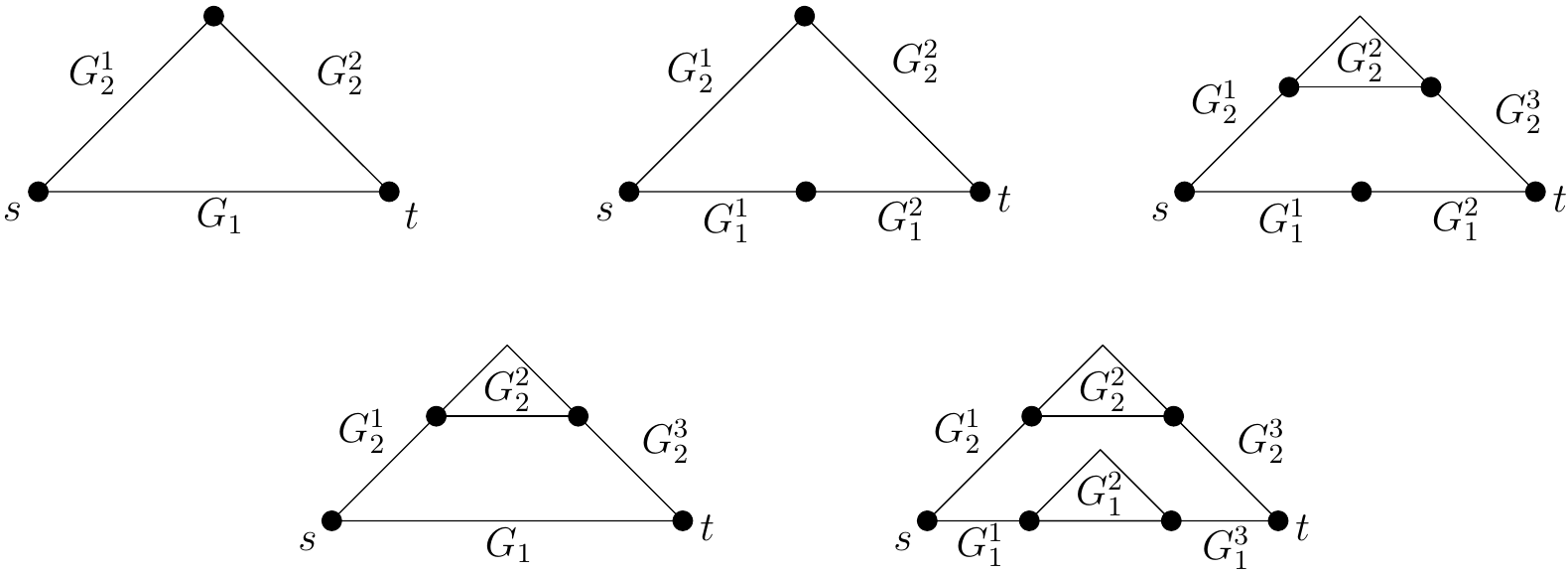} \caption{Possible construction of a 
$(2,2)$-graph $G$ by parallelization of two $(1,1)$-graphs $G_1$ and 
$G_2$.}\label{fig-up2}
\end{figure}

To deal with $(3,2)$-graphs, we need a more general concept than up-triangle 
embeddings. To this end, we introduce the following definitions. 

An \emph{up-spade} is a convex pentagon with vertices $a,b,c,d,e$ in 
counterclockwise order, such that the segment $ab$ is decreasing, the segment 
$bc$ is horizontal, the segment $cd$ is increasing, the segment $ed$ is 
decreasing and the segment $ae$ is increasing. We say that a series-parallel 
graph $(G,s,t)$ has an \emph{up-spade embedding} if it can be embedded into an 
up-spade $abcde$ using the slopes from $S$, in such a way that the vertex $s$ 
coincides with the point $a$, the vertex $t$ coincides either with the point 
$b$ or with the point $c$, and all the remaining vertices of $G$ are inside or 
on the boundary of the up-spade. Analogously, a \emph{reverse up-spade 
embedding} is an embedding of a series-parallel graph $(G,s,t)$ in which $s$ 
coincides with $b$ or $c$ and $t$ coincides with $d$. 
See Figure~\ref{fig-spade}.   

\begin{figure}
\hfil\includegraphics[scale=0.6]{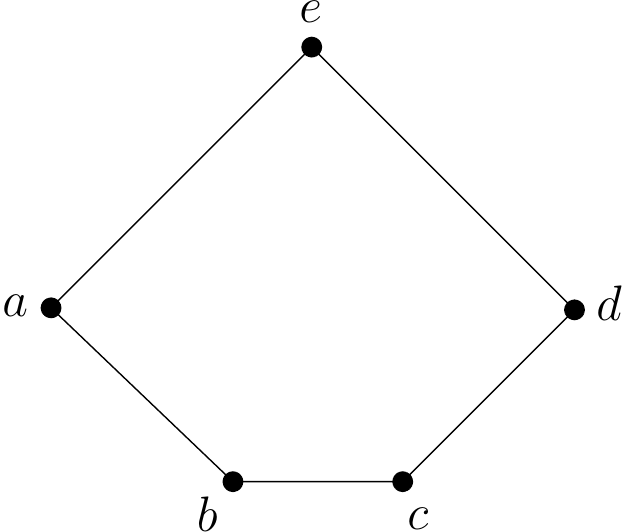} \caption{An up-spade.}\label{fig-spade}
\end{figure}
                               
\begin{lemma}
\label{lem-32} Every $(3,2)$-graph $(G,s,t)$ has an up-spade embedding or an 
up-triangle embedding. Similarly, every $(2,3)$-graph $(G,s,t)$ has a reverse 
up-spade embedding or an up-triangle embedding.
\end{lemma}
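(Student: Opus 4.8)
The plan is to prove the statement for $(3,2)$-graphs by induction on the number of edges of $G$; the claim for $(2,3)$-graphs then follows by reflecting the plane across a vertical line, which fixes horizontal segments, interchanges increasing and decreasing segments, and carries an up-spade embedding to a reverse up-spade embedding while exchanging the roles of source and sink. For the base case a single edge $st$ is drawn as the horizontal hypotenuse of an up-triangle. For the inductive step I would use that every SP-graph other than a single edge arises from a reduced SP-operation, and split into the serialization case and the parallelization case; the whole argument is driven by a bookkeeping of vertex degrees that pins down the types of the operands and lets me invoke Lemma~\ref{lem-up} and the induction hypothesis.

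In the \emph{serialization} case where $G$ is a serialization of $G_1,G_2,\dotsc,G_k$, the source $s$ lies only in $G_1$, and every identified internal vertex $t_i=s_{i+1}$ has total degree at most $3$ with each side contributing at least one edge; hence $G_1$ is again a $(3,2)$-graph while $G_2,\dotsc,G_k$ are all $(2,2)$-graphs. By the induction hypothesis $G_1$ has an up-triangle or up-spade embedding, and by Lemma~\ref{lem-up} each later operand has an up-triangle embedding. If $G_1$ lives in an up-triangle, I chain all the (suitably scaled) up-triangles along a common horizontal line exactly as in the proof of Lemma~\ref{lem-up}, obtaining an up-triangle embedding of $G$. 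If $G_1$ lives in an up-spade whose sink $t_1$ sits at the lower-right corner $c$, I scale the up-triangle of the $(2,2)$-graph $G_2\circ\dotsb\circ G_k$ small enough to fit under the increasing right wall and attach it along the bottom edge to the right of $c$, so that the new sink becomes the lower-right corner of an enlarged up-spade.

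In the \emph{parallelization} case, the sink $t$ has degree at most $2$ and each branch contributes at least one edge at $t$, so there are exactly two branches $G_1,G_2$, each of sink-degree $1$, with source-degrees summing to at most $3$. If that sum is at most $2$, then $G$ is itself a $(2,2)$-graph and Lemma~\ref{lem-up} applies directly. The essential case is source-degree $3$, where one branch is a $(2,1)$-graph and the other a $(1,1)$-graph, both of which are in particular $(2,2)$-graphs. Here I build an up-spade with the source at the upper-left vertex $a$, using all three directions available there (decreasing, horizontal, increasing) to host the three edges leaving $s$, and with the sink at a bottom corner $t\in\{b,c\}$. Since both branches must join the \emph{elevated} source $a$ to the \emph{lowered} sink $t$, I would decompose each branch (using Lemma~\ref{lem-11} and its $(2,1)$-analogue) into the one or two edges incident to the terminals plus a $(2,2)$-bulk, realize the bulk as a horizontal up-triangle via Lemma~\ref{lem-up} (reflecting one of the two branches to a down-triangle so the three slopes at $a$ are distinct), and draw the peeled terminal edges as increasing or decreasing segments descending from $a$ and into $t$; the two bulks go into disjoint sub-regions on the two sides of the spade.

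The main obstacle is precisely this last geometric gluing. For the parallelization step one must check that the two branches, forced to share both the raised source $a$ and the lowered sink $t$, can be drawn simultaneously inside a single convex up-spade without crossings, using only the three slopes, with the peeled terminal edges genuinely lying in $S$ and the terminals landing on the prescribed pentagon vertices. Making the two sub-regions disjoint and convex, so that the spade's boundary slopes agree with the connecting-edge slopes, needs a careful picture-driven case analysis. A second delicate point is threading the bookkeeping of \emph{which} bottom corner ($b$ or $c$) the sink occupies consistently through the induction, since the serialization step can attach its trailing up-triangle only along a free side of the spade; the slack ``$b$ or $c$'' in the definition of an up-spade embedding is exactly what makes this possible, but each sub-case must be verified separately. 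By contrast, the degree computations that determine the operand types are routine.
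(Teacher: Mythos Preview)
Your overall strategy is the paper's: induction on the number of edges, a serialization/parallelization split, and degree bookkeeping that forces the operands into the $(2,2)$- and $(1,1)$-shapes covered by Lemmas~\ref{lem-up} and~\ref{lem-11}. Your parallelization case is essentially the paper's construction.

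The gap is in your serialization step. You lump the entire tail $G_2\circ\dotsb\circ G_k$ into one $(2,2)$-graph and try to weld its up-triangle directly onto $G_1$'s spade at $t_1$, but you only sketch this when $t_1=c$ (and even there the up-triangle's left leg lies along the spade wall $cd$, so disjointness from $G_1$'s drawing is not automatic); the case $t_1=b$ you explicitly leave open. The paper closes this with a structural observation you do not make: since the serialization is \emph{reduced} and $\deg_G(s)=3$, the operand $G_1$ is neither an edge nor a serialization, hence a parallelization, so its sink has degree at least~$2$ in $G_1$; consequently $G_2$ has source degree at most~$1$, and being itself not a serialization, $G_2$ must be a single edge. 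That free edge is the paper's connector: drawn from $t_1$ along an available slope it exits the spade, and the up-triangle of the remaining tail $G_3^+=G_3\circ\dotsb\circ G_k$ is then attached in a region disjoint from $G_1$. Inserting this observation into your argument is exactly what is needed to finish the serialization case and dissolve the $b$-versus-$c$ bookkeeping you flagged.
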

\begin{proof}                                 
It suffices to prove just the first part of the lemma; the other part is 
symmetric. We again proceed by induction.

Let $(G,s,t)$ be a $(3,2)$-graph. If $G$ is also a $(2,2)$-graph, then $G$ has 
an up-triangle embedding by Lemma~\ref{lem-up}. Assume that $G$ is not a 
$(2,2)$-graph. It is easy to see that in such case $G$ has no up-triangle 
embedding, since it is impossible to embed three edges into an up-triangle in 
such a way that they meet in the endpoint of its hypotenuse.

Assume that $G$ has been obtained by a reduced serialization of a sequence of 
graphs $G_1,G_2,\dotsc,G_k$. It follows that the graph $G_2$ is a single edge, 
because otherwise the two graphs $G_1$ and $G_2$ would share a vertex of 
degree at least 4. Let $G_3^+$ be the (possibly empty) serialization of 
$G_3,\dotsc,G_k$. If $G_3^+$ is nonempty, it has an up-triangle embedding by 
Lemma~\ref{lem-up}. The graph $G_1$ has an up-spade embedding by induction. We 
may combine these embeddings as shown in Figure~\ref{fig-321} to obtain an 
up-spade embedding of $G$. If $G_3^+$ is empty, the construction is even 
simpler.          

\begin{figure}
\hfil\includegraphics[scale=0.6]{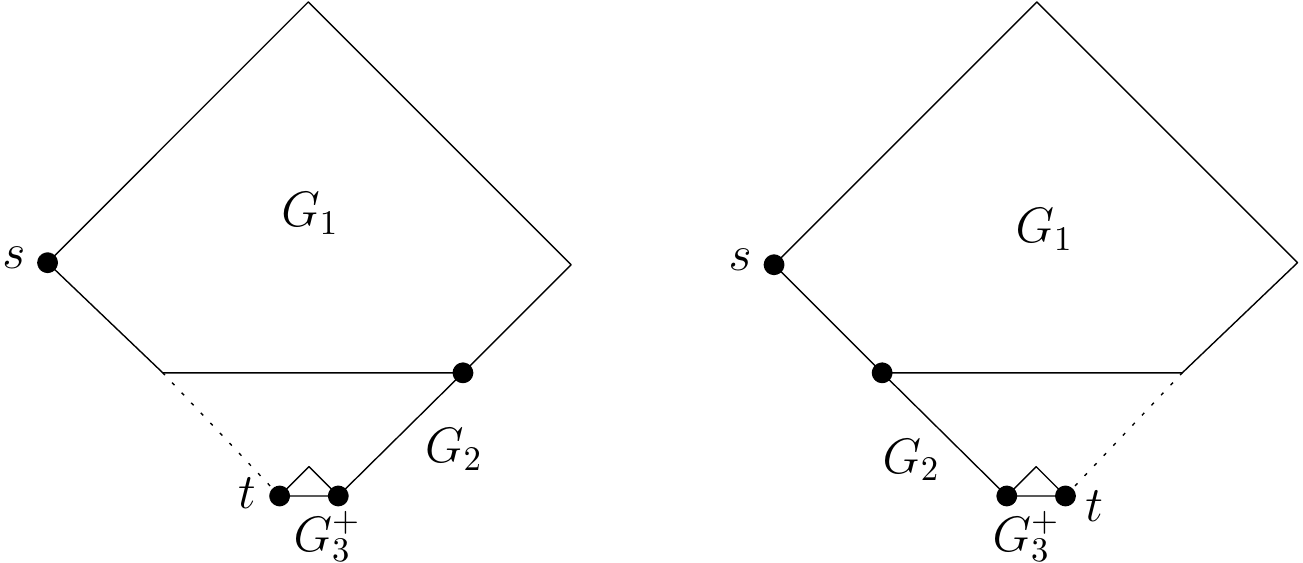} \caption{Constructing an up-spade 
embedding of a $(3,2)$-graph by serialization of a $(3,2)$-graph $G_1$, an edge 
$G_2$, and a $(2,2)$-graph $G_3^+$.}\label{fig-321}
\end{figure} 

Assume now that $G$ has been obtained by parallelization. Necessarily, it was 
a parallelization of a $(1,1)$-graph $G_1$ and a $(2,1)$-graph $G_2$. The 
graph $G_2$ can then be obtained by a (not necessarily reduced) serialization 
of a $(2,2)$-graph $G_2^1$ and a single edge $G_2^2$. The graph $G_2^1$ has an 
up-triangle embedding. Combining these embeddings, we obtain an up-spade 
embedding of $G$, as shown in Figure~\ref{fig-322}. Note that we distinguish 
the possible structure of $G_1$ using Lemma~\ref{lem-11}.
\end{proof}                                            

\begin{figure}
\hfil\includegraphics[scale=0.6]{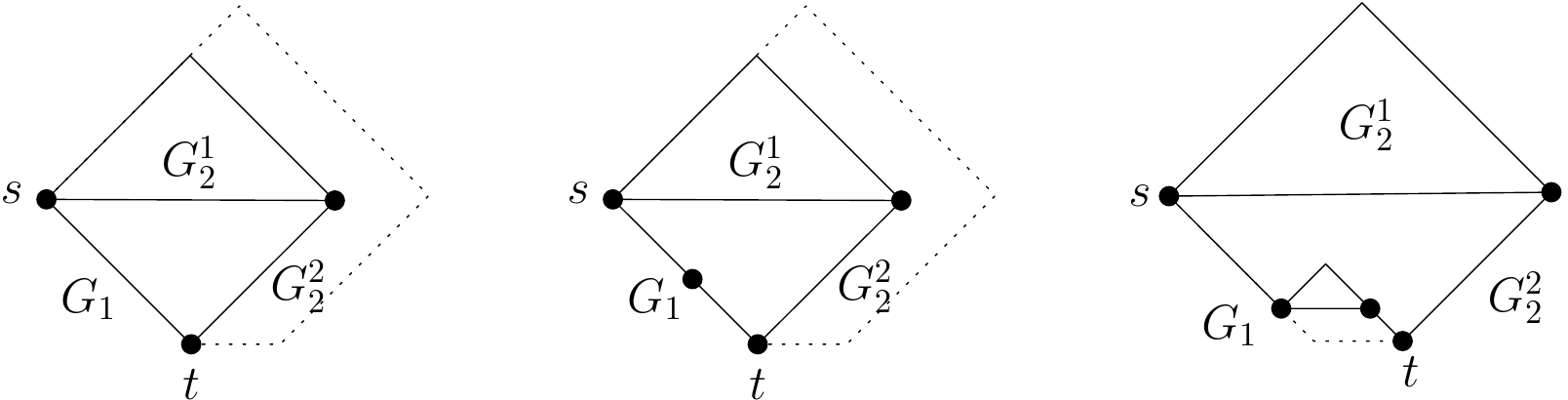} \caption{Constructing an up-spade 
embedding of a $(3,2)$-graph by parallelization of a $(1,1)$-graph $G_1$, and  
a $(2,1)$-graph $G_2$.}\label{fig-322}
\end{figure}

We are now ready to give the proof of the main theorem of this section.

\begin{proof}[Proof of Theorem~\ref{thm:spmax3}] Let $(G,s,t)$ be a
series-parallel graph of maximum degree at most 3. We may assume that both $s$ 
and $t$ have degree 3, otherwise we obtain the required embedding of $G$ directly 
from Lemma~\ref{lem-32}. 

Let us distinguish the possible constructions of $G$.

Assume first that $G$ was obtained by a parallelization of three graphs $G_1$, 
$G_2$ and $G_3$. Then all the three graphs $G_i$ are $(1,1)$-graphs.  By 
Lemma~\ref{lem-11}, each $G_i$ is a single edge, a series of two edges, or a 
series of an edge $G_i^1$, a $(2,2)$-graph $G_i^2$ and an edge $G_i^3$. Since 
at most one of the three graphs $G_1$, $G_2$ and $G_3$ consists of a single 
edge, we may easily construct an embedding of $G$. A typical case is shown in 
Figure~\ref{fig-331}, where $G_1$ is assumed to be a path of length 2, $G_2$ 
is a single edge, and $G_3$ is a serialization of an edge, a $(2,2)$-graph, 
and another edge. The remaining possibilities for the $G_i$'s are analogous.   

\begin{figure}
\hfil\includegraphics[scale=0.6]{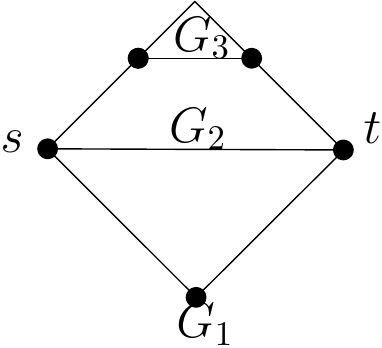} \caption{Example of a $(3,3)$-graph 
obtained by a parallelization of three graphs.}\label{fig-331}
\end{figure} 

Next, let us deal with the case when $G$ is a reduced parallelization of a 
$(1,1)$-graph $G_1$ and a $(2,2)$-graph $G_2$. Since the parallelization is 
reduced, we know that $G_2$ is obtained by serialization. Necessarily, at 
least one graph in the reduced serialization of $G_2$ consists of a single 
edge. From this, we conclude that $G_2$ can be obtained by a (not necessarily 
reduced) serialization of a $(2,2)$-graph $G_2^1$, an edge $G_2^2$, and a 
$(2,2)$-graph $G_2^3$. Using the fact that each $(2,2)$-graph has an 
up-triangle embedding, we construct the embedding of $G$ as shown in 
Figure~\ref{fig-332}.     

\begin{figure}
\hfil\includegraphics[scale=0.6]{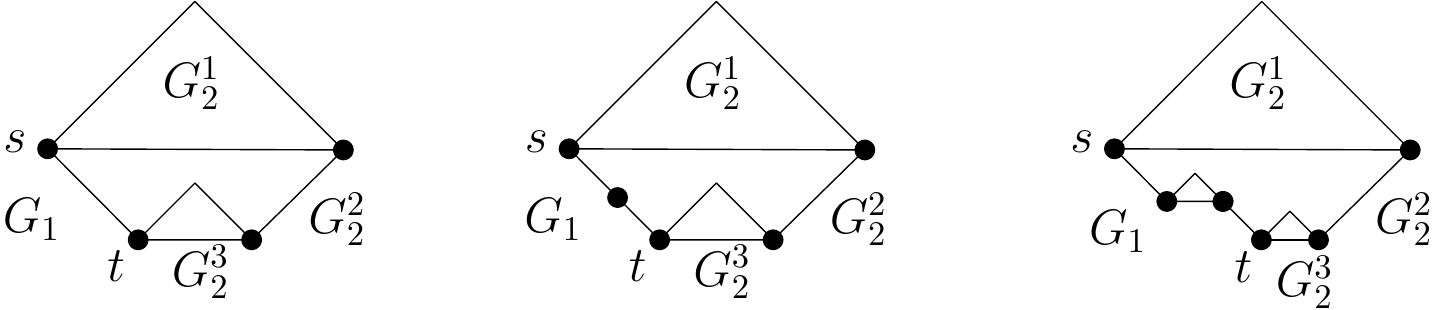} \caption{Embedding of a 
$(3,3)$-graph obtained by a reduced parallelization of a $(1,1)$-graph $G_1$ 
and a $(2,2)$-graph $G_2$. The three cases correspond to the three possible 
decompositions of $G_1$ by Lemma~\ref{lem-11}.}\label{fig-332}
\end{figure}

Now consider the situation when $G$ is obtained by a parallelization 
of a $(1,2)$-graph $G_1$ and a $(2,1)$-graph $G_2$. We see that $G_1$ must be 
a series of an edge $G_1^1$ and a $(2,2)$-graph $G_1^2$, while $G_2$ is a 
series of a $(2,2)$-graph $G_2^1$ and an edge $G_2^2$. We then obtain an 
embedding of $G$ by the construction depicted in Figure~\ref{fig-333}. 

\begin{figure}
\hfil\includegraphics[scale=0.6]{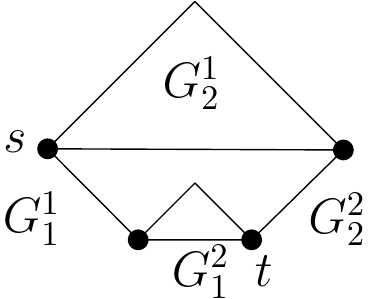} \caption{Embedding of a 
$(3,3)$-graph obtained by a reduced parallelization of a $(1,2)$-graph $G_1$ 
and a $(2,1)$-graph $G_2$.}\label{fig-333}
\end{figure}

It remains to consider the situation when $G$ is obtained by serialization. 
Necessarily, at least one graph in the reduced serialization of $G$ consists 
of a single edge. We then conclude that $G$ can be expressed as a serialization 
of a $(3,2)$-graph $G_1$, an edge $G_2$, and a $(2,3)$-graph $G_3$. We know by 
Lemma~\ref{lem-32} that $G_1$ has an up-spade embedding and that $G_3$ has a 
reverse up-spade embedding. Furthermore, we may flip the embedding 
of $G_3$ upside down, since this operation preserves the set of slopes $S$.
With the embedding of $G_1$ and the flipped embedding of $G_3$, we easily 
obtain an embedding of $G$, as shown in Figure~\ref{fig-334}. 

\begin{figure}
\hfil\includegraphics[scale=0.6]{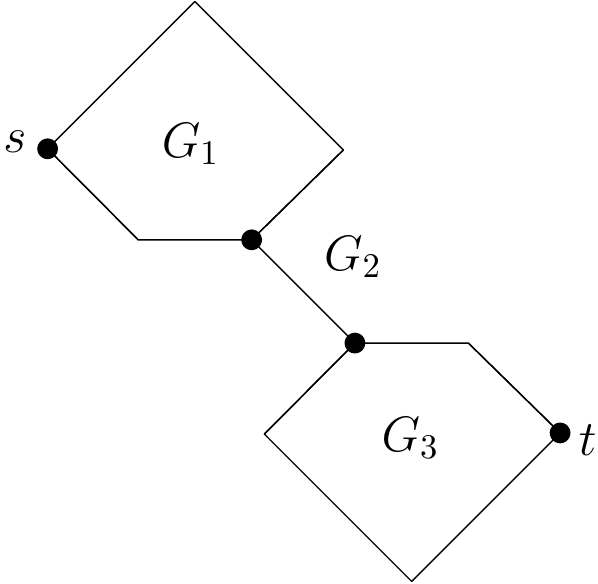} \caption{Embedding of a 
$(3,3)$-graph obtained by a serialization of a $(3,2)$-graph, an edge, and a 
$(2,3)$-graph.}\label{fig-334}
\end{figure}

This completes the proof of the main result of this section.
\end{proof} 

%
%
%
%
\section{Conclusion and open problems}

We have presented an upper bound of $\cO(\Delta^5)$ for the planar slope number of planar partial
3-trees of maximum degree $\Delta$. It is not obvious to us if the used methods can be generalized to
a larger class of graphs, such as planar partial $k$-trees of bounded degree. Since a partial $k$-tree
is a graph of tree-width at most $k$, it would mean generalizing our result to graphs of a larger,
yet constant, tree-width.

In view of the results of Keszegh et al.~\cite{kppt2007} and Mukkamala and Szegedy~\cite{ms2009}
for the slope number of (sub)cubic planar graphs, it would also be interesting to find analogous
bounds for the planar slope number.

This paper does not address lower bounds for the planar slope number in terms
of $\Delta$; this might be another direction worth pursuing.

%
%
%
%

\end{document}